\numberwithin{equation}{section}
\theoremstyle{plain}%
\newtheorem{theorem}{Theorem}
\numberwithin{theorem}{section}
\newtheorem{proposition}[theorem]{Proposition}
\newtheorem{example}[theorem]{Example}
\newtheorem{lemma}[theorem]{Lemma}
\newtheorem{corollary}[theorem]{Corollary}
\newtheorem{remark}[theorem]{Remark}
\newtheorem{conjecture}[theorem]{Conjecture}
\newcommand{\C}{\mathbb{C}}
\newcommand{\RR}{\mathbb{R}}
\newcommand{\Z}{\mathbb{Z}}
\newcommand{\PP}{\mathbb{P}}
\newcommand{\TP}{\mathbb{TP}}
\newcommand{\T}{\mathbb{T}}
\newcommand{\R}{\mathbb{R}}
\newcommand{\trp}{\text{trop}}
\newcommand{\Div}{\text{div}}
\date{}
\begin{document}

\title{\bf Tropicalization of Del Pezzo Surfaces}

\author{Qingchun Ren, Kristin Shaw and Bernd Sturmfels}

\maketitle

\begin{abstract} 
\noindent
We determine the tropicalizations of very affine surfaces
over a valued field that are obtained
from del Pezzo surfaces of degree $5, 4$ and $3$
by removing their $(-1)$-curves.
On these tropical surfaces, the boundary divisors
are represented by trees at infinity. These trees
are glued  together according to the
Petersen, Clebsch and Schl\"afli graphs, respectively.
There are $27$ trees on each tropical cubic surface, attached to
a bounded complex with
up to $73$ polygons.
The maximal cones in the
$4$-dimensional moduli~fan reveal two generic types
of such surfaces.
\end{abstract}

\section{Introduction}\label{sec:intro}

A smooth cubic surface $X$ in projective $3$-space $\PP^3$ contains
$27$ lines. These lines are characterized intrinsically as 
the $(-1)$-curves on~$X$, that is, rational curves of self-intersection~$-1$.
The tropicalization of an embedded surface $X$ 
is obtained directly from the cubic polynomial  that defines it
in $ \mathbb{P}^3$.
 The resulting    tropical surfaces are dual to
 regular subdivisions  of the size $3$ tetrahedron.
These  come in many combinatorial types \cite[\S 4.5]{MacStu}.
 If the subdivision is a  unimodular  triangulation
 then the tropical surface is called smooth (cf.~\cite[Prop.~4.5.1]{MacStu}).
 
Alternatively, by removing the $27$ lines from
the cubic surface $X$, we obtain
a very affine surface $X^0$.
In this paper, we study  the  tropicalization of $X^0$, denoted ${\rm trop}(X^0)$,
via the embedding in its intrinsic torus \cite{HKT}.
 This is an invariant of the surface $X$. The
 $(-1)$-curves on $X$ now become visible as $27$ \emph{boundary trees}
 on ${\rm trop}(X^0)$.
This distinguishes our approach from Vigeland's work \cite{Vig}
on the $27$ lines on tropical cubics in $\TP^3$. 
It also highlights an important feature of tropical 
geometry \cite{MikRau}: there are different  tropical models of a single classical  variety,
and the choice of model 
depends on what structure one wants revealed. 

Throughout this paper we work over a field
$K$ of characteristic zero that has  a non-archimedean valuation.
Examples include the Puiseux series $K = \C \{ \!\{ t\} \!\}$ and the
$p$-adic numbers $K = \mathbb{Q}_p$.
We use the term {\em cubic surface} to mean
a marked smooth del Pezzo surface $X$ of degree~$3$.
A {\em tropical cubic surface} is the intrinsic tropicalization 
${\rm trop}(X^0)$ described above.
Likewise,  {\em tropical del Pezzo surface} refers to the 
tropicalization ${\rm trop}(X^0)$ for degree $\geq 4$.
 Here, the adjective
``tropical'' is used solely for brevity, 
instead of the more accurate ``tropicalized''  used in \cite{MacStu}.
We do not consider non-realizable tropical del Pezzo surfaces,
nor tropicalizations of surfaces defined over a field $K$ with positive characteristic.
 
The moduli space of cubic surfaces is four-dimensional,
and its tropical version is the four-dimensional {\em Naruki fan}.
This was constructed combinatorially by Hacking, Keel and Tevelev \cite{HKT},
and it was realized in \cite[\S 6]{RSS} as the tropicalization of a very affine variety 
$\mathcal{Y}^0$, obtained from  the 
Yoshida variety $\mathcal{Y}$ in $\PP^{39}$ by intersecting with $(K^*)^{39}$. The Weyl group
$W(\mathrm{E}_6) $ acts on  $\mathcal{Y}$
 by permuting the $40$ coordinates.
The maximal cones in ${\rm trop}(\mathcal{Y}^0)$ come in
two $W(\mathrm{E}_6) $-orbits.
We here compute the corresponding cubic surfaces:

\begin{theorem} \label{thm:deg3}
There are two generic types of
tropical cubic surfaces. They are contractible and  
characterized at infinity by $27$ metric trees, each having $10$ leaves.
The first type has $73$ bounded cells, $150$ edges, $78$ vertices, 
$135$ cones, $189$ flaps, $216$ rays, and all $27$ trees are trivalent.
The second type has $72$ bounded cells, $148$ edges, $77$ vertices, 
$135$ cones, $186$ flaps, $213$ rays, and three 
of the $27$ trees have a $4$-valent node.
\hfill (For more data see Table \ref{tab:cubicsurf}.)
\end{theorem}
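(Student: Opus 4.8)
The plan is to reconstruct, for each of the two $W(\mathrm{E}_6)$-orbits of maximal cones in $\mathrm{trop}(\mathcal{Y}^0)$, an explicit marked cubic surface whose tropical parameters lie in the relative interior of that cone, and then to compute the tropicalization of its very affine part directly. Concretely, I would fix a representative point $w$ in the relative interior of a maximal cone of the Naruki fan. By the realization of this fan as $\mathrm{trop}(\mathcal{Y}^0)$ in \cite{RSS}, the point $w$ prescribes the valuations of the $40$ Yoshida coordinates, hence the tropical data of a surface $X$. Using the Cox ring of the del Pezzo surface of degree $3$---equivalently, the $27$ distinguished sections cutting out the $(-1)$-curves---I would write generators for the ideal of $X^0$ in its intrinsic torus \cite{HKT}, with coefficients whose valuations are governed by $w$, and compute $\mathrm{trop}(X^0)$ as a $2$-dimensional balanced polyhedral complex. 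Since the combinatorial type is constant on the relative interior of each maximal cone, it suffices to carry out this computation once per orbit.

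The boundary structure is forced by the geometry of the lines. Each of the $27$ lines meets exactly $10$ others on $X$, so $L_i \cap X^0$ is a copy of $\mathbb{P}^1$ minus $10$ points, a very affine curve whose tropicalization is a metric tree with $10$ leaves; this explains the $27$ trees each having $10$ leaves. These trees are glued along the intersection graph of the lines, namely the Schl\"afli graph. I would then separate the bounded cells (polygons) from the unbounded cells, organizing the latter into the rays, flaps, and cones that encode the recession behavior at infinity, and tabulate all face numbers for each type.

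Finally, I would verify contractibility and assemble the combinatorial counts. The tropical surface deformation retracts onto its bounded complex, so it is enough to check that this complex is contractible, which is consistent with the Euler relation $V - E + F = 1$: indeed $78 - 150 + 73 = 1$ for the first type and $77 - 148 + 72 = 1$ for the second. The main obstacle is computational rather than conceptual: correctly setting up the intrinsic-torus equations and extracting a complete, verified polyhedral description of a surface living in a high-dimensional torus. The delicate bookkeeping is to confirm the precise difference between the two types---how the three $4$-valent nodes in the second type cause the counts to drop by exactly one bounded cell, two edges, and three rays and flaps relative to the first---and I expect this combinatorial verification to be where the real work lies.
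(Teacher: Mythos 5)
Your strategy---specialize the Cox ideal at a representative point $w$ of each orbit of maximal Naruki cones and tropicalize $X^0$ directly in its intrinsic torus---is the paper's first announced technique (Section 2), and it is precisely the route the authors report as failing for degree $3$: their {\tt gfan} computation on $I_X$ over $K=\mathbb{Q}(t)$ did not terminate, and making the direct computation rigorous without a full Gr\"obner traversal would require knowing that the $270$ trinomials form a tropical basis, which the paper leaves open as Conjecture~\ref{conj:tropbasis}. The actual proof avoids ideals entirely: it computes the fibers of the piecewise-linear map $\pi\colon \mathrm{trop}(\mathcal{G}^0)\to\mathrm{trop}(\mathcal{Y}^0)$, which first requires the explicit computation of $\mathrm{Berg}(\mathrm{E}_7)$ and $\mathrm{trop}(\mathcal{G}^0)$ in Lemma~\ref{lem:bignumbers}.

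Beyond feasibility, there is a genuine mathematical gap: your claim that the combinatorial type of $\mathrm{trop}(X^0)$ is constant on the relative interior of each maximal cone of the Naruki fan, so that one computation per orbit suffices. This is false. The universal family induces a strictly finer subdivision of $\mathrm{trop}(\mathcal{Y}^0)$---barycentric on the (aaaa) cones, splitting each into $24$ subcones, and each (aaab) cone into $6$ (Lemma~\ref{lem:barycentric})---and the fiber type is constant only on the open cells of this refinement. Concretely, for $\sigma=\mathrm{pos}(\mathbf{r}_1,\mathbf{r}_2,\mathbf{r}_3,\mathbf{r}_4)$ of type (aaaa), a point such as $\mathbf{r}_1+\mathbf{r}_2+2\mathbf{r}_3+2\mathbf{r}_4$ lies in the interior of $\sigma$ but on a wall of the refinement, and the surface there is one of the degenerate types of Table~\ref{tab:cubicsurf} (e.g.\ row $({\rm a}_2{\rm a}_3{\rm a}_4)$, with $74$ vertices rather than $78$); the most symmetric choice $\mathbf{r}_1+\mathbf{r}_2+\mathbf{r}_3+\mathbf{r}_4$ is even worse, lying on the $({\rm a}_4)$ ray. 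This is exactly why the paper takes $\mathbf{x}=\mathbf{r}_1+2\mathbf{r}_2+3\mathbf{r}_3+4\mathbf{r}_4$. Moreover, that the $9720$ cells of the refinement yield only \emph{two} types up to relabeling is itself established by an orbit computation under $\mathrm{Stab}_{\mathrm{E}_6}(W(\mathrm{E}_7))$, not an a priori fact you may assume. Finally, your contractibility argument is incomplete: $V-E+F=1$ is necessary but not sufficient for a $2$-complex to be contractible (acyclic, non-simply-connected $2$-complexes exist); the paper checks contractibility of the bounded complex directly and notes it also follows from the modification construction of Theorem~\ref{thm:modify}.
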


Here, by {\em cones} and {\em flaps} we mean
unbounded $2$-dimensional polyhedra
that are affinely isomorphic to $\mathbb{R}_{\geq 0}^2$
and $ [0,1] \times \mathbb{R}_{\geq 0} $ respectively.
The {\em characterization at infinity} is analogous to that  
for tropical planes  in \cite{HJJS}.
Indeed, by \cite[Theorem~4.4]{HJJS},
every tropical plane $L$ in $\mathbb{TP}^{n-1}$
is given by an arrangement of $n$ boundary trees,
each having $n-1$ leaves, and $L$ is uniquely determined by this
arrangement. Viewed intrinsically, $L$ is the tropicalization 
of a very affine surface, namely
the complement of $n$ lines in $\mathbb{P}^2$.
Theorem~\ref{thm:deg3} offers the analogous
characterization for the tropicalization of 
the complement of the
$27$ lines on a cubic surface.

\smallskip

Tropical geometry has undergone an explosive development 
during the past decade. To the outside observer,
the literature is full of conflicting definitions and diverging approaches.
The text books  \cite{MacStu, MikRau} offer some help,
but they each stress one particular point of view.

An important feature of the present paper is its focus on the
unity of tropical geometry.  We shall
develop three different techniques for  computing tropical del Pezzo surfaces:
\begin{itemize}
\item Cox ideals, as explained in Section 2;
\item fan structures on moduli spaces, as explained in Section 3;
\item tropical  modifications, as explained in Section 4.
\end{itemize}

The first approach uses the Cox ring of $X$,
starting from the presentation given in \cite{SX}.
Propositions \ref{prop:IXdeg4} and \ref{prop:IXdeg3} extend this
to the universal Cox ideal over the moduli space.
For any particular surface $X$, defined over a field such as $K = \mathbb{Q}(t)$,
computing the tropicalization is a task for
the software {\tt gfan} \cite{jensen}.
In the second approach, we construct del Pezzo surfaces
from fibers in the natural maps of moduli fans. Our success
along these lines completes the program started by 
Hacking {\it et al.}~\cite{HKT}
and further developed in \cite[\S 6]{RSS}.
The third approach is to build tropical del Pezzo surfaces  combinatorially
from the tropical projective plane $\mathbb{TP}^2$ by
the process of tropical modifications in the sense of Mikhalkin \cite{MikICM}.
It mirrors the classical construction 
by blowing up points in $\PP^2$.
All three approaches yield the same results. 
Section 5 presents an in-depth  study of  
the combinatorics of tropical
cubic surfaces and their trees,
 including an extension of Theorem \ref{thm:deg3}
that includes all  degenerate surfaces.

\begin{figure}[h]
\begin{center}
\vspace{-0.1in} 
\includegraphics[width=0.69\textwidth]{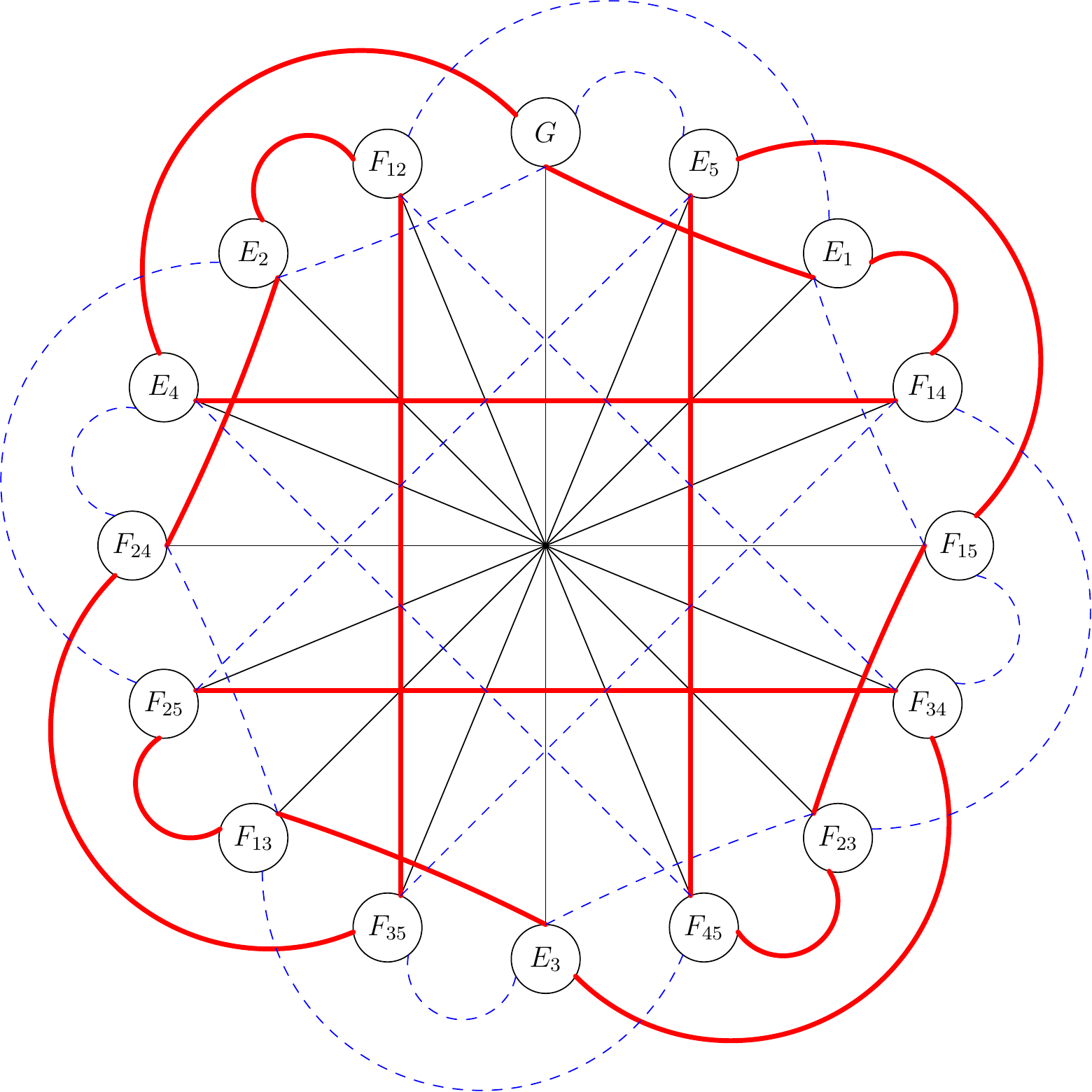}
  \vspace{-0.3in}
  \end{center}
  \caption{
    \label{fig:clebsch}
  Tropical del Pezzo surfaces of degree $4$ 
  illustrated by coloring  the Clebsch graph}
\end{figure}

\medskip

We now illustrate the rich combinatorics in our story
for a del Pezzo surface $X$ of degree~$4$.
Del Pezzo surfaces of degree $d \geq 6$ are toric surfaces, so 
they naturally  tropicalize as polygons with $12-d$ vertices
\cite[Ch.~3]{MikRau}.
On route to Theorem \ref{thm:deg3}, we prove
the following for $d=4,5$:

\begin{proposition} \label{prop:deg45}
Among tropical del Pezzo surfaces of degree $4$ and $5$,
each has a unique generic combinatorial type.
For degree $5$, this is the cone over the Petersen graph.
For degree $4$, the surface is contractible and characterized
at infinity by
$16$ trivalent metric trees, each with $5$ leaves. 
It has $9$ bounded cells, $20$ edges, $12$ vertices, $40$ cones, $32$ flaps, 
and $48$ rays.
\end{proposition}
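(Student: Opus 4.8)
The plan is to handle the two degrees separately, exploiting the fact that the degree-$5$ surface is essentially a moduli space, while the degree-$4$ surface is one blow-up larger and requires an actual computation.

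For degree $5$: a del Pezzo surface $X$ of degree $5$ is $\PP^2$ blown up at four general points, and its isomorphism type is unique because the moduli has dimension $10-2\cdot 5 = 0$. The marked surface is the Deligne--Mumford compactification $\overline{M}_{0,5}$, its ten $(-1)$-curves are exactly the ten boundary divisors $\delta_{ij}$, and removing them gives $X^0 \cong M_{0,5}$. Hence $\text{trop}(X^0) = \text{trop}(M_{0,5})$ in the intrinsic torus, which is the classical space of phylogenetic trees on five leaves, whose link is the Petersen graph. So $\text{trop}(X^0)$ is the cone over the Petersen graph, and uniqueness is automatic. The incidences match: the Petersen graph is $3$-regular, so each $(-1)$-curve meets three others and each boundary tree degenerates to a single tripod, which is why only the graph, and no nontrivial metric tree, appears.

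For degree $4$ the moduli has dimension $10 - 2\cdot 4 = 2$, and there are $16$ $(-1)$-curves meeting according to the $5$-regular Clebsch graph. I would start from the universal Cox ideal of Proposition \ref{prop:IXdeg4}, specialize to a generic surface over a field such as $K = \mathbb{Q}(t)$, and compute $\text{trop}(X^0)$ with {\tt gfan}; in parallel I would read off the same fan from a generic fiber of the degree-$4$ moduli fan (Section 3) and from the tropical modification construction (Section 4). The agreement of the three methods is the consistency promised in the introduction, and from the output one reads the cell numbers: $12$ vertices, $20$ edges, $9$ bounded cells, together with the $48$ rays, $40$ cones, and $32$ flaps on the unbounded part. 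For the characterization at infinity, each $(-1)$-curve $C \cong \PP^1$ meets exactly five others, so $C^0 = C \setminus \{\text{5 points}\}$ tropicalizes to a tree with five leaves; generically this tree is trivalent, yielding the $16$ trivalent metric trees. Contractibility follows by showing the bounded complex is a contractible disk assembled from the $9$ bounded cells, and that $\text{trop}(X^0)$ deformation retracts onto it along the rays, cones, and flaps.

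The hard part will be the degree-$4$ uniqueness statement: I must show that the entire two-dimensional moduli fan yields a single combinatorial surface, in contrast with the two types that appear in degree $3$. This amounts to checking that the maximal cones form a single orbit under the Weyl group $W(D_5)$, of order $1920 = |\mathrm{Aut}(\text{Clebsch})|$, or at least that every maximal cone produces an isomorphic polyhedral surface, so that the verification reduces to one representative. Matching the exact cell counts across all three computational approaches, and confirming that no degeneration of the trees occurs at a generic point of the moduli fan, is where the main bookkeeping lies; the degree-$5$ identification with tree space serves as the clean anchor that fixes the combinatorics at the boundary.
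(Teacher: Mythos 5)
Your proposal is correct and follows essentially the same route as the paper: the degree-$5$ case is settled by identifying $X^0$ with $M_{0,5}$, whose tropicalization is the Bergman fan of the graphical matroid of $K_4$, i.e.\ the cone over the Petersen graph, and the degree-$4$ case is a {\tt gfan} computation on the Cox ideal $I_X$ over $K=\mathbb{Q}(t)$, cross-checked by tropical modifications, with uniqueness of the generic type coming from the fact that a generic point of the moduli fan $\mathrm{trop}(M_{0,5})$ lies in the interior of a cone over an edge of the Petersen graph and these edges form a single orbit under the symmetry group --- exactly the orbit argument you anticipate as the ``hard part.'' (One tiny caveat: the bounded complex in Figure~\ref{fig:clebsch2} is contractible but not a disk, since each edge of the central rectangle carries two attached triangles.)
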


\begin{figure}[h]
\begin{center}
\vspace{-0.15in}
\includegraphics[width=0.6\textwidth]{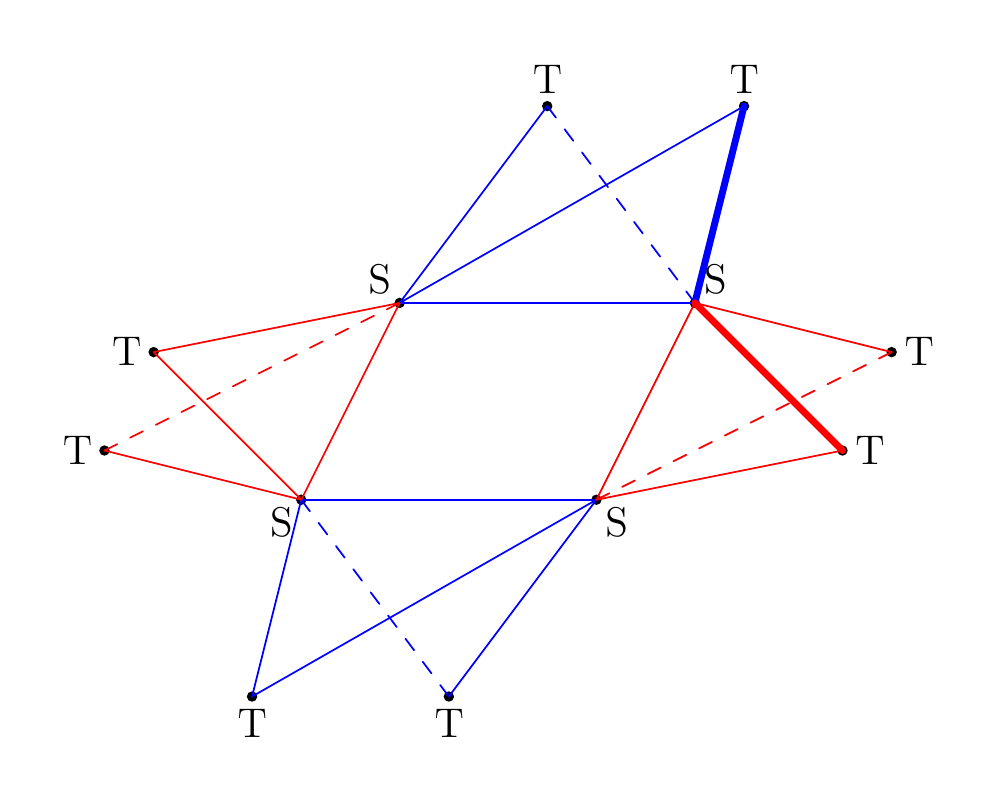}
  \vspace{-0.44in}
  \end{center}
  \caption{
   \label{fig:clebsch2}
   The bounded complex of the
  tropical del Pezzo surface in degree $4$}
\end{figure}

To understand degree $4$, we consider the
$5$-regular {\em Clebsch graph} in 
Figure \ref{fig:clebsch}.
Its $16$ nodes are the $(-1)$-curves on $X$,
labelled $E_1,\ldots,E_5,F_{12},\ldots,F_{45},G$.  Edges
represent intersecting pairs of $(-1)$-curves.
In the constant coefficient case, when
$K$ has trivial valuation, the tropicalization of $X$
is  the fan over this graph. However, over fields $K$ with non-trivial valuation,
${\rm trop}(X^0)$ is usually not a fan, but one
sees the generic type from Proposition~\ref{prop:deg45}.
Here, the Clebsch graph  deforms into a trivalent graph with $48 = 16 \cdot 3$  nodes
and $72 = 40+32$ edges, determined by the color coding in Figure~\ref{fig:clebsch}.
Each of the $16$ nodes is replaced by a trivalent tree with five leaves.
 Incoming edges of the same color
(red or blue) form a {\em cherry} (= two adjacent leaves) in that tree, while the black edge connects to
the non-cherry leaf.  

\begin{corollary}
For a del Pezzo surface $X$ of degree $4$,
the $16$ metric trees on its tropicalization ${\rm trop}(X^0)$, obtained from
the $(-1)$-curves on  $X$,
are identical up to relabeling.
\end{corollary}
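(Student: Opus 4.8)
The plan is to deduce the corollary directly from the structural statement of Proposition~\ref{prop:deg45} together with the symmetry of the Clebsch graph under the Weyl group $W(\mathrm{D}_5)$. Since Proposition~\ref{prop:deg45} asserts that in degree $4$ the generic combinatorial type is \emph{unique}, the entire tropical surface ${\rm trop}(X^0)$ is determined once and for all, and in particular so is the combined combinatorial data of the $16$ boundary trees and the way they are glued according to the Clebsch graph. The key observation I would exploit is that $W(\mathrm{D}_5)$ acts transitively on the $16$ nodes of the Clebsch graph --- the nodes being the $16$ $(-1)$-curves $E_1,\ldots,E_5,F_{12},\ldots,F_{45},G$ --- and that this action is induced by an automorphism of the whole picture (permuting coordinates of the intrinsic torus and hence acting on ${\rm trop}(X^0)$).

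First I would recall from the discussion preceding the corollary that each of the $16$ nodes is replaced by a trivalent tree with five leaves, where the combinatorial shape is rigidly prescribed by the color coding: the two red-colored incoming edges form one cherry, the two blue-colored incoming edges form the second cherry, and the single black edge attaches to the remaining leaf. Thus every one of the $16$ trees has the \emph{same} abstract trivalent topology, the ``snowflake''/caterpillar type on five leaves with two cherries. This already gives the statement up to relabeling at the level of combinatorial type; what remains is to promote this to an identification of the \emph{metric} trees.

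Next I would argue that the metric --- i.e.\ the edge lengths of each boundary tree, inherited from the lattice lengths in the tropicalization --- is also transported by the $W(\mathrm{D}_5)$-symmetry. Because the symmetry is realized by a linear automorphism of the ambient intrinsic torus that permutes the $27$ (here $16$) boundary coordinates, it is an isometry of ${\rm trop}(X^0)$ in the integral-affine structure, and therefore it carries the metric tree at one node isometrically onto the metric tree at any other node. Transitivity of the node-action then forces all $16$ metric trees to be pairwise isometric, i.e.\ identical up to the relabeling of leaves induced by the corresponding Weyl group element. This is the content of ``identical up to relabeling.''

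The main obstacle I anticipate is verifying that the ambient symmetry genuinely acts as an integral-affine \emph{isometry} on the boundary trees, rather than merely permuting them as abstract graphs; one must check that the lattice lengths of tree edges (which encode the metric) are preserved, not just the incidence structure. This reduces to confirming that the $W(\mathrm{D}_5)$-action on the intrinsic torus $N_\RR$ is by lattice automorphisms preserving the integral structure that defines tropical edge lengths --- a fact that follows from how the intrinsic tropicalization of \cite{HKT} is set up, since the Weyl group acts by permuting a distinguished lattice basis associated to the $(-1)$-curves. Once that is in hand, transitivity on nodes does the rest, and the corollary follows with no further computation.
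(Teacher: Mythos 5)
There is a genuine gap at the heart of your argument: for a \emph{fixed} surface $X$, the Weyl group $W(\mathrm{D}_5)$ does not act on ${\rm trop}(X^0)$ at all. The $W(\mathrm{D}_5)$-action permutes the $16$ Cox variables of the \emph{universal} family, but it simultaneously moves the base point: the defining quadrics of $I_X$ have coefficients $p_{ij}$, and applying $w$ carries $V(I_{X,p})$ to $V(I_{X,w(p)})$, the fiber over a different point of the moduli space $M_{0,5}$. A generic parameter vector $p$ has trivial stabilizer in $W(\mathrm{D}_5)$, and a generic $X$ has no automorphisms, so there is no ``automorphism of the whole picture'' permuting the coordinates of the intrinsic torus while preserving the embedded surface $X^0$. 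What equivariance actually gives you is an isometry between the tree at node $C$ on ${\rm trop}(X^0_p)$ and the tree at node $w(C)$ on the \emph{different} tropical surface ${\rm trop}(X^0_{w(p)})$; transitivity of the node action therefore never compares two trees on the same surface, which is what the corollary asserts. The worry you flag (combinatorial versus metric isometry) is not the problem --- the problem is that the symmetry you invoke does not exist on a fixed fiber. There is also a circularity issue: the color-coded cherry structure of Figure \ref{fig:clebsch} that you take as given is, in the paper, a \emph{consequence} of this corollary (its proof ends by deriving the relabeling rules (\ref{eq:relabel1})--(\ref{eq:relabel2}) that ``explain the color coding''), and the proof of Proposition \ref{prop:deg45} in Section 2 in turn quotes that color coding. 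Finally, the corollary is stated for an arbitrary degree-$4$ del Pezzo surface, including those whose tropicalization is degenerate, so uniqueness of the \emph{generic} combinatorial type could not suffice even if the symmetry argument worked.

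The paper's actual route avoids all of this by working intrinsically on the $(-1)$-curves themselves: each of the $16$ $(-1)$-curves is a $\PP^1$ carrying exactly five marked points (its intersections with the other $(-1)$-curves), passing from one $(-1)$-curve to another corresponds to a Cremona transformation of $\PP^2$, and Cremona transformations preserve the cross ratios among the five marked points. Since the metric tree on each boundary $\PP^1$ is read off from the valuations of these cross ratios (as in \cite[Proposition 6.5.1]{MacStu}), all $16$ metric trees coincide, and tracking the identifications yields the explicit relabeling rules (\ref{eq:relabel1}) and (\ref{eq:relabel2}). If you want to salvage a moduli-theoretic argument, you would need to make precise that the intrinsic tropicalization is marking-independent and then extract a within-fiber comparison from the equivariance --- but at that point you are essentially re-proving the Cremona-invariance of cross ratios that the paper uses directly.
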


\begin{proof}
Moving from one $(-1)$-curve on $X$ to another corresponds to a
Cremona transformation  of the plane $\PP^2$. Each $(-1)$-curve on 
$X$ has exactly five marked points arising from its intersections with the other
$(-1)$-curves. Moreover, the Cremona 
transformations preserve the cross ratios among the five marked
points on these $16$ $\PP^1$'s. 
From the valuations of all the various cross ratios one can read off the combinatorial 
trees along with their edge lengths, as explained in e.g.~\cite[Proposition 6.5.1]{MacStu} or
\cite[Example 5.2]{RSS}.
We then obtain
the following relabeling rules for the leaves on the $16$ trees, 
which live in the circular nodes
of Figure \ref{fig:clebsch}.

We start with the tree  $G$ whose leaves are labeled
$E_1,E_2,E_3,E_4,E_5$.  For the specific example in Figure~\ref{fig:clebsch},
this is the caterpillar tree $(\{E_1,E_4\}, E_3, \{E_2,E_5\})$.
Now, given any trivalent tree for $G$, 
the tree  $F_{ij}$
is obtained by relabeling the five leaves as follows:
\begin{equation}
\label{eq:relabel1}
 E_i \mapsto E_j,   \quad E_j \mapsto E_i, \quad 
 E_k \mapsto F_{lm}, \quad E_l \mapsto F_{km}, \quad E_m \mapsto F_{kl}.
 \end{equation}
Here $\{k,l,m\} = \{1,2,3,4,5\} \backslash \{i,j\}$.
The tree $E_i$ is obtained from the tree $G$ by relabelling
\begin{equation}
\label{eq:relabel2}
 E_i \mapsto G \qquad \hbox{and}  \qquad E_j \mapsto F_{ij} \quad \hbox{where $j \neq i$}. 
 \end{equation}
This explains the color coding of the graph in Figure \ref{fig:clebsch}.
\end{proof}




 The bounded complex of ${\rm trop}(X^0)$
is shown in Figure \ref{fig:clebsch2}. It consists of a central
rectangle, with two triangles attached to each of its four edges.
There are $12$ vertices, four vertices of the rectangle,
labeled  {\bf S},
and eight pendant vertices, labeled {\bf T}.
To  these $12$ vertices and $20$ edges, we attach 
the flaps and cones, according to the
deformed Clebsch graph structure.
The link of each ${\bf S}$ vertex in the surface ${\rm trop}(X^0)$ is the
Petersen graph (Figure \ref{fig:petersen}), while the link of each
{\bf T} vertex is the bipartite graph~$K_{3,3}$.
 The bounded complex has $16$ chains ${\bf TST}$
   consisting of two edges with different colors.
These are attached by flaps to the bounded parts of
the $16$ trees. The Clebsch graph (Figure \ref{fig:clebsch})
can be recovered from  Figure \ref{fig:clebsch2} as follows:
its nodes are the {\bf TST} chains, and
two chains connect if they share precisely one vertex.
Out at infinity, {\bf T} vertices attach along cherries,
while {\bf S} vertices attach along non-cherry leaves.
Each such attachment between two of the $16$ trees
links to the bounded complex by a cone.

\section{Cox Ideals}

We study del Pezzo surfaces over $K$
of degrees $5, 4$ and $3$.
Such surfaces $X$ are
obtained from $\PP^2$ by blowing up
$4, 5$ or $6$ points in general position, and 
we obtain moduli by varying these points. From an algebraic perspective,
it is convenient to represent $X$ by its {\em Cox ring}
\begin{equation}
\label{eq:coxring}
 {\rm Cox}(X)\,\, \,\,= \,\, \bigoplus_{\mathcal{L} \in {\rm Pic}(X)} H^0(X, \mathcal{L}). 
 \end{equation}
The Cox ring of a del Pezzo surface $X$ was first studied by Batyrev and Popov \cite{BatPop}. 
We shall express this ring explicitly as a quotient of a polynomial ring
over the ground field $K$:
$$ {\rm Cox}(X)  \,\,\, = \,\,\,
K \bigl[\, x_C \,:\, C \,\,\hbox{is a $(-1)$-curve on $X$} \bigr]  
\,\,\, \hbox{modulo
an ideal $I_X$ generated by quadrics}.$$ 
The number of variables $x_C$ in our three 
polynomial rings is $10, 16$ and $27$ respectively.
The ideal $I_X$ is the {\em Cox ideal} of the surface $X$. It was 
conjectured already in \cite{BatPop} that the ideal $I_X$ is generated by  quadrics.
This conjecture was proved in several papers, including \cite{STV, SX}.

The Cox ring encodes all maps from $X$ to a projective space.
Such a map is given by the  $\mathbb{N}$-graded subring
$\,{\rm Cox}(X)_{ [\mathcal{L}]} =  \bigoplus_{m \geq 0} H^0(X, m\mathcal{L})\,$
for a fixed line bundle $\mathcal{L} \in {\rm Pic}(X)$.
The image of the map $X \rightarrow {\rm Proj}({\rm Cox}(X)_{[\mathcal{L}]})$ 
is contained in the projective space $\PP^{N}$,
where $\,N = {\rm dim} (H^0(X, \mathcal{L}))-1$, provided
${\rm Cox}(X)_{ [\mathcal{L}]}$ is generated in degree $1$.
This applies to both the anticanonical map
and to the blow-down map to~$\PP^2$.

In what follows, we  give explicit generators for all relevant Cox ideals $I_X$.
Some of this is new and of independent interest.
The tropicalization of $X^0$ we seek is defined from the
ideal $I_X$. So, in principle, we can compute
${\rm trop}(X^0)$ from $I_X$ using the software {\tt gfan} \cite{jensen}.
Recall that $X^0$ denotes the very affine surface
obtained from $X$ by removing all $(-1)$-curves.

\medskip

\begin{figure}[h]
\begin{center}
\vspace{-0.15in} 
\includegraphics[width=0.3\textwidth]{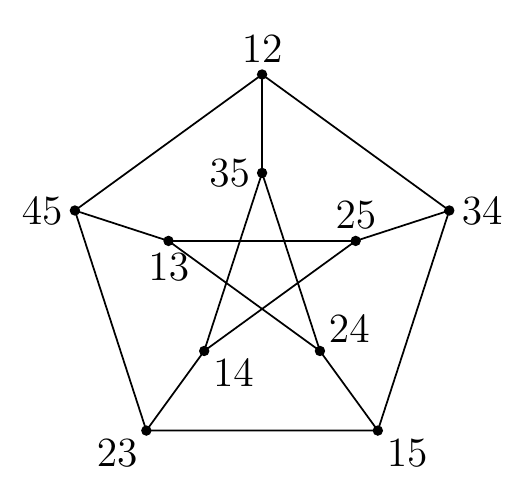}
  \vspace{-0.3in}
  \end{center}
  \caption{The tropical del Pezzo surface
  ${\rm trop}(M_{0,5}) $ is the cone over the Petersen graph.
  \label{fig:petersen}}
\end{figure}

\noindent {\bf Del Pezzo Surfaces of Degree $5$} \\
Consider four general points in $\PP^2$.
This configuration is projectively unique, so there are no moduli.
The surface $X$ is the moduli space $\overline{M}_{0,5}$
of rational stable curves with five marked points, see for example \cite{keel}.
The very affine variety $X^0$ is simply $M_{0,5}$, the moduli space of rational 
curves with five distinct marked points. It is the complement of a hyperplane arrangement 
whose underlying matroid is the graphical matroid of the complete graph $K_4$. 
The Cox ideal is the {\em Pl\"ucker ideal}
of relations among $2 \times 2$-minors of a $2 \times 5$-matrix:
$$ \begin{matrix}                                                                 
I_X &  = &  \langle\, p_{12} p_{34} - p_{13} p_{24} + p_{14} p_{23} ,            
&  \,p_{12} p_{35} - p_{13} p_{25} + p_{15} p_{23} , \\                          
& & \,\,p_{12} p_{45} - p_{14} p_{25} + p_{15} p_{24} ,                          
&  \,p_{13} p_{45} - p_{14} p_{35} + p_{15} p_{34} ,                        
&  \, p_{23} p_{45} - p_{24} p_{35} + p_{25} p_{34}\, \rangle  .          
\end{matrix}                                                                      
$$
The affine variety of  $I_X\,$ in $\bar{K}^{10}$
is the {\em universal torsor} of $X$, now regarded over
the algebraic closure $\bar{K}$ of the given valued field $K$.
From the perspective of blowing up $\PP^2$ at $4$ points,
the ten variables (representing the ten $(-1)$-curves) fall in two groups:
the four exceptional fibers, and the six lines spanned by pairs of points.
For example, we may label the fibers by
 $$  E_1 = p_{15}, \,\,
E_2 = p_{25}, \,\,
E_3 = p_{35}, \,\,
E_4 = p_{45} , $$
and the six lines by
$$ 
F_{12} = p_{34}, \,\,
F_{13} = p_{24}, \,\,
F_{14} = p_{23}, \,\,
F_{23} = p_{14}, \,\,
F_{24} = p_{13}, \,\,
F_{34} = p_{12}.$$


The Cox ideal $I_X$ is homogeneous with
respect to the natural grading 
 by the Picard group ${\rm Pic}(X) = \Z^5$.
 In Pl\"ucker coordinates, this grading is 
 given by setting ${\rm deg}(p_{ij}) 
  = e_i + e_j$, where $e_i$ 
 represents the $i^{th}$ standard basis vector in $\Z^5 = {\rm Pic}(X)$. 
 This translates into an action of the torus
 $\,(\bar{K}^*)^5 = {\rm Pic}(X) \otimes_\Z \bar{K}^*\,$
on the universal torsor in $\bar{K}^{10}$.
We remove the ten coordinate hyperplanes
in $\bar{K}^{10}$, and we take the quotient  
 modulo $(\bar{K}^*)^5$. The result is precisely the
 very affine del Pezzo surface we seek to tropicalize:
\begin{equation}
\label{embeddeg5} X^0 \,= \,M_{0,5} \,\,\, \subset \,\,\, (\bar{K}^*)^{10}/(\bar{K}^*)^5. 
\end{equation}
The $2$-dimensional balanced 
fan ${\rm trop}(X^0)$ is the Bergman fan of the graphical matroid 
of $K_4$. It is known from \cite{ArdKli}
 that this is the cone over the Petersen graph.
This is also  easy to check directly with {\tt gfan} on $I_X$. 
This fan is also
the moduli space of $5$-marked rational tropical curves,
that is, $5$-leaf trees with lengths on the two bounded edges 
(cf.~\cite[\S 4.3]{MacStu}).

\bigskip

\noindent {\bf Del Pezzo Surfaces of Degree $4$} \\
Consider now five general points in $\PP^2$. There are  two degrees of freedom.
The moduli space is  our previous del Pezzo surface  $M_{0,5}$.
 Indeed, fixing five points in $\PP^2$
corresponds to fixing a point $(p_{12},\ldots,p_{45})$ in $M_{0,5} $, using
Cox-Pl\"ucker coordinates as in (\ref{embeddeg5}). 
Explicitly, if we write the five points as a $3 \times 5$-matrix
then the $p_{ij}$ are the Pl\"ucker coordinates of its kernel.
Replacing $K$ with the previous Cox ring, we may
 consider the {\em  universal del Pezzo surface} $\mathcal{Y}$.
The {\em universal Cox ring} is the quotient of a polynomial
ring in $26=10+16$ variables:
\begin{equation}
\label{eq:cox16} K[\mathcal{Y}] \, = \,{\rm Cox}(M_{0,5})[E_1,E_2,E_3,E_4,E_5,F_{12}, 
F_{13},\ldots, F_{45}, G]/I_\mathcal{Y}.
\end{equation}
As before, $E_i$ represents the exceptional divisor over point $i$,
and $F_{ij}$ represents the line spanned by points $i$ and $j$.
The variable $G$ represents the conic spanned by the five points.

\begin{proposition} \label{prop:IXdeg4}
Up to saturation with respect to the product of the $26$ variables,
the universal Cox ideal $I_\mathcal{Y}$
for degree $4$ del Pezzo surfaces
is generated by the following $45$ trinomials:
$$
\begin{matrix}
\hbox{Base Group} & & 
p_{12}  p_{34}{-} p_{13}  p_{24}{+}p_{14}  p_{23} &
p_{12}  p_{35}{-}p_{13}  p_{25}{+}p_{15}  p_{23} &
 p_{12}  p_{45}{-}p_{14}  p_{25}+p_{15}  p_{24}, \\ & & 
 p_{13}  p_{45}{-}p_{14}  p_{35}+p_{15}  p_{34} &
p_{23}  p_{45}{-}p_{24}  p_{35}{+}p_{25}  p_{34} & 
 \end{matrix} 
 $$ \vspace{-0.27cm} $$ 
\begin{matrix}
\hbox{Group 1} & & 
F_{23}  F_{45} {-} F_{24}  F_{35} {+} F_{25}  F_{34} &
p_{23}  p_{45}  F_{24}  F_{35} {-} p_{24}  p_{35}  F_{23}  F_{45}{-} G  E_1 \\ & & 
p_{23}  p_{45}  F_{25}  F_{34} {-} p_{25}  p_{34}  F_{23}  F_{45} {-} G  E_1 & 
p_{24}  p_{35}  F_{25}  F_{34} {-} p_{25}  p_{34}  F_{24}  F_{35} {-} G  E_1 
\end{matrix}
$$ \vspace{-0.25cm}  $$
\begin{matrix}
\hbox{Group 2} & & 
F_{13}  F_{45} {-} F_{14}  F_{35} {+} F_{15}  F_{34} &
p_{13}  p_{45}  F_{14}  F_{35} {-} p_{14}  p_{35}  F_{13}  F_{45} {-} G  E_2 \\ & & 
p_{13}  p_{45}  F_{15}  F_{34} {-} p_{15}  p_{34}  F_{13}  F_{45} {-} G  E_2 &
p_{14}  p_{35}  F_{15}  F_{34} {-} p_{15}  p_{34}  F_{14}  F_{35} {-} G  E_2
\end{matrix}
$$ \vspace{-0.25cm}  $$
\begin{matrix}
\hbox{Group 3} & & 
 F_{12}  F_{45} {-} F_{14}  F_{25} {+} F_{15}  F_{24} &
 p_{12}  p_{45}  F_{14}  F_{25} {-} p_{14}  p_{25}  F_{12}  F_{45} {-} G  E_3,\\ & & 
p_{12}  p_{45}  F_{15}  F_{24} {-} p_{15}  p_{24}  F_{12}  F_{45} {-} G  E_3 &
p_{14}  p_{25}  F_{15}  F_{24} {-} p_{15}  p_{24}  F_{14}  F_{25} {-} G  E_3
\end{matrix}
$$ \vspace{-0.25cm}  $$
 \begin{matrix}
\hbox{Group 4} & & 
F_{12}  F_{35} {-} F_{13}  F_{25} {+} F_{15}  F_{23} &
p_{12}  p_{35}  F_{13}  F_{25} {-} p_{13}  p_{25}  F_{12}  F_{35} {-} G  E_4 \\ & & 
p_{12}  p_{35}  F_{15}  F_{23} {-} p_{15}  p_{23}  F_{12}  F_{35} {-} G  E_4 &
p_{13}  p_{25}  F_{15}  F_{23} {-} p_{15}  p_{23}  F_{13}  F_{25} {-} G  E_4
\end{matrix}
$$ \vspace{-0.25cm}  $$
\begin{matrix}
\hbox{Group 5} & & 
F_{12} F_{34} {-} F_{13}  F_{24} {+} F_{14}  F_{23} &
p_{12}  p_{34}  F_{13}  F_{24} {-} p_{13}  p_{24}  F_{12}  F_{34} {-} G  E_5 \\ & & 
p_{12}  p_{34}  F_{14}  F_{23} {-} p_{14}  p_{23} F_{12}  F_{34} {-} G  E_5 & 
p_{13}  p_{24}  F_{14}  F_{23} {-} p_{14}  p_{23}  F_{13}  F_{24} {-} G  E_5
\end{matrix}
$$ \vspace{-0.25cm}  $$
\begin{matrix}
\hbox{Group 1'} & \quad & 
p_{25}  F_{12}  E_2 {-} p_{35}  F_{13}  E_3 {+} p_{45}  F_{14}  E_4 & &
p_{24}  F_{12}  E_2 {-} p_{34}  F_{13}  E_3 {+} p_{45}  F_{15}  E_5 \\ & & 
p_{23}  F_{12}  E_2 {-} p_{34}  F_{14}  E_4 {+} p_{35}  F_{15}  E_5 & &
p_{23}  F_{13}  E_3 {-} p_{24}  F_{14}  E_4 {+} p_{25}  F_{15} E_5
\end{matrix}
$$ \vspace{-0.25cm}  $$
\begin{matrix}
\hbox{Group 2'} & \quad & 
p_{15}  F_{12} E_1 {-} p_{35}  F_{23}  E_3 {+} p_{45}  F_{24}  E_4 & &
p_{14}  F_{12}  E_1 {-} p_{34} F_{23}  E_3 {+} p_{45}  F_{25}  E_5 \\ & & 
p_{13}  F_{12}  E_1 {-} p_{34}  F_{24} E_4 {+} p_{35}  F_{25}  E_5 & &
p_{13}  F_{23}  E_3 {-} p_{14}  F_{24} E_4 {+} p_{15}  F_{25}  E_5
\end{matrix}
$$ \vspace{-0.25cm}  $$
\begin{matrix}
\hbox{Group 3'} & \quad & 
p_{15}  F_{13}  E_1 {-} p_{25}  F_{23}  E_2 {+} p_{45}  F_{34}  E_4 & &
p_{14}  F_{13}  E_1 {-} p_{24}  F_{23}  E_2 {+} p_{45}  F_{35}  E_5 \\ & & 
 p_{12}  F_{13}  E_1 {-} p_{24}  F_{34}  E_4 {+} p_{25}  F_{35}  E_5 & &
 p_{12}  F_{23}  E_2 {-} p_{14}  F_{34}  E_4 {+} p_{15}  F_{35}  E_5
 \end{matrix}
$$ \vspace{-0.25cm}  $$
\begin{matrix}
\hbox{Group 4'} & \quad & 
p_{15}  F_{14}  E_1 {-} p_{25}  F_{24}  E_2 {+} p_{35}  F_{34}  E_3 & &
p_{13}  F_{14}  E_1 {-} p_{23}  F_{24} E_2 {+} p_{35}  F_{45}  E_5 \\ & & 
p_{12}  F_{14}  E_1 {-} p_{23}  F_{34}  E_3 {+} p_{25}  F_{45}  E_5 & &
p_{12}  F_{24}  E_2 {-} p_{13}  F_{34}  E_3 {+} p_{15}  F_{45}  E_5
\end{matrix}
$$ \vspace{-0.25cm}  $$
\begin{matrix}
\hbox{Group 5'} & \quad & 
p_{14}  F_{15}  E_1 {-} p_{24}  F_{25} E_2 {+} p_{34}  F_{35}  E_3 & &
p_{13}  F_{15}  E_1 {-} p_{23}  F_{25}  E_2 {+} p_{34}  F_{45} E_4 \\  & & 
p_{12}  F_{15}  E_1 {-} p_{23}  F_{35}  E_3 {+} p_{24}  F_{45}  E_4 & &
p_{12}  F_{25}  E_2 {-} p_{13}  F_{35}  E_3 {+} p_{14}  F_{45} E_4
\end{matrix}
$$
\end{proposition}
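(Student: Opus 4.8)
The plan is to read all $45$ trinomials off the geometry of the blow-up and then to prove they generate $I_\mathcal{Y}$ up to the stated saturation. Recall that the $26$ Cox variables are sections of line bundles on the universal del Pezzo surface: $E_i$ cuts out the exceptional divisor over the $i$-th point, $F_{ij}$ cuts out the strict transform of the line through points $i,j$ (class $H-E_i-E_j$), $G$ cuts out the conic through all five points (class $2H-E_1-\cdots-E_5$), and the $p_{ij}$ are pulled back as the Plücker coordinates of the base $M_{0,5}$. Every listed generator is a linear syzygy among monomials lying in a single graded piece $H^0(X,\mathcal{L})$ of $\Cox(X)$, and in each relevant class that space of sections is a two-dimensional pencil. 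So the first step is to match each group with its pencil.

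The base group is just the Plücker ideal of $M_{0,5}$ pulled back to the base, so nothing new is needed there. In Group $i$, the three monomials $F_{jk}F_{lm}$ (one for each partition of $\{1,\dots,5\}\setminus\{i\}$ into pairs) together with $G\,E_i$ all have class $2H-\sum_{t\neq i}E_t$, the class of conics through the four points other than $i$; these conics form a pencil, so $H^0$ has dimension two. In Group $i'$, the four monomials $F_{ij}E_j$ ($j\neq i$) all have class $H-E_i$, the pencil of lines through the $i$-th point, again of dimension two. Thus each of these ten groups exhibits four sections of a rank-two space, and every triple among them satisfies a unique linear relation up to scale. This also explains the deliberate redundancy of the list: from four sections of a rank-two pencil one writes all four triple-syzygies, although two already suffice --- a redundancy that is convenient for the saturation below.

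Next I would pin down the coefficients. A linear dependence among three sections of a pencil has coefficients equal to $2\times 2$ determinants in a chosen basis of that pencil; tracking these determinants through the identification of the five points with the kernel of their $3\times 5$ matrix expresses them in the Plücker coordinates $p_{ij}$ set up before the proposition, and a direct evaluation on the universal torsor shows they are exactly the $p$-monomials appearing in the trinomials. The only subtlety is the scaling of the Cox variables: the coefficients $\pm 1$ in the pure-$F$ syzygies and the monomials $p_{ij}p_{kl}$, $p_{ij}$ in the mixed syzygies come out correctly only after the standard normalization of $E_i,F_{ij},G$, i.e.\ the trivializations of the line bundles compatible with the blow-down to $\PP^2$. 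Carrying out this bookkeeping group by group establishes that all $45$ trinomials lie in $I_\mathcal{Y}$.

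Finally, for generation I would invoke the Batyrev--Popov theorem, proved in \cite{STV,SX}, that for each fixed surface $X$ the Cox ideal $I_X$ is generated by quadrics in $E_i,F_{ij},G$. The pencils enumerated above --- five of class $H-E_i$ and five of class $2H-\sum_{t\neq i}E_t$ --- account for all such quadratic syzygies, so on a generic fiber the specializations of the listed trinomials generate $I_X$. The remaining task is to lift this from the generic fiber to the universal ideal: using that $\mathcal{Y}\to M_{0,5}$ is flat with base ring $\Cox(M_{0,5})$, one shows that the ideal of the $45$ trinomials agrees with $I_\mathcal{Y}$ after saturating by the product of the $26$ variables, which discards the coordinate strata on which the section-counting degenerates. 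I expect this last step --- confirming that no further generators are needed uniformly over the moduli space and that the saturation is precisely the product of variables --- to be the main obstacle; it is most cleanly discharged by a multigraded Hilbert-function comparison against the known Cox ring, or verified directly with {\tt gfan} over the function field of $M_{0,5}$.
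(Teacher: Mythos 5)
Your route is genuinely different from the paper's. The paper does not argue intrinsically on the surface at all: it derives both this proposition and the degree-$3$ analogue simultaneously from the G\"opel variety. By \cite[Theorem 6.2]{RSSS}, $\mathcal{G}$ is the \emph{ideal-theoretic} intersection of a toric variety $\mathcal{T}$ and a linear space cut out by $315$ linear trinomials; pulling these back to the Cox ring of $\mathcal{T}$ gives $315$ quartic trinomials in the $63$ variables indexed by roots of $\mathrm{E}_7$. The $45$ quartics not involving $d_7$ are then translated by the substitution $d_i-d_6\mapsto E_i$, $d_i+d_j+d_6\mapsto F_{ij}$, $\sum_{i=1}^6 d_i\mapsto G$: the five not involving $d_6$ become the Pl\"ucker relations, and the remaining $40$ become the ten groups. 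Crucially, the ideal-theoretic intersection statement from \cite{RSSS} is what guarantees generation of the correct Laurent ideal on the torus, i.e.\ the statement ``up to saturation.'' Your approach instead reads each group as the triple-syzygies among four sections of a pencil ($H-E_i$ for Group $i'$, $2H-\sum_{t\neq i}E_t$ for Group $i$), which is a correct and illuminating structural observation --- it matches the paper's remark that $I_X$ is minimally generated by $20$ quadrics, two per group, and one can indeed check that on a quartic del Pezzo all intersecting pairs of $(-1)$-curves meet with multiplicity $1$, so the ten conic-bundle classes are the only Picard degrees carrying quadratic relations. Your approach buys intrinsic geometric meaning; the paper's buys a complete proof, because the hard universal statement was already established in \cite{RSSS}.

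That said, there is a real gap at exactly the step you flag as the main obstacle, and it is not a formality. Membership of the $45$ trinomials in $I_\mathcal{Y}$ plus fiberwise generation (via Batyrev--Popov quadratic generation of each $I_X$) does not by itself yield the universal ideal-theoretic statement: a Nullstellensatz-style argument from the equality of vanishing loci on the torus only identifies the \emph{radical} of the saturated trinomial ideal with $I_\mathcal{Y}$, whereas the proposition asserts equality of ideals. To close this you would need genuine commutative algebra --- e.g.\ showing the quotient by the trinomial ideal is flat over the localized base ring $\Cox(M_{0,5})[p_{ij}^{-1}]$ so that fiberwise generation propagates, or carrying out the multigraded Hilbert-function comparison you mention --- none of which is executed in your sketch, and deferring it to a {\tt gfan} verification is notable given that the paper reports such computations failing to terminate already for the degree-$3$ Cox ideal (the degree-$4$ tropicalization did succeed, but that is a different computation from certifying generators of the universal ideal over a function field). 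Also be careful with the quantifier in your penultimate paragraph: you need generation on \emph{every} fiber over the locus where all $p_{ij}\neq 0$, not merely on a generic fiber, since the saturation is only by the product of the $26$ variables. In short: your Steps 1--3 (pencil identification, determinantal coefficients, fiberwise generation) are sound, but the universal lifting is the actual content of the proposition, and the paper obtains it from \cite[Theorem 6.2]{RSSS} rather than proving it directly.
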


Proposition  \ref{prop:IXdeg4} will be derived later in this section.
For now, let us discuss the structure and symmetry of the
generators of $I_\mathcal{Y}$.
We consider the $5$-dimensional {\em demicube}, here denoted ${\rm D}_5$. This is
the convex hull of the following $16$ points in the hyperplane $ \{a_0 = 0 \} \subset \RR^6$:
\begin{equation}
\label{eq:demicube}
 \bigl\{ \,(1,a_1,a_2,a_3,a_4,a_5) \in \{0,1\}^6 \,: \,\,a_1+a_2+a_3+a_4+a_5 \,\,\hbox{is odd} \,\bigr\}. 
 \end{equation}
The group of symmetries of ${\rm D}_5$ is the Weyl group $W({\rm D}_5)$.
It acts transitively on (\ref{eq:demicube}). 
 There is a natural bijection between the $16$ variables in the Cox ring
 and the vertices of ${\rm D}_5$:
 \begin{equation}
 \label{eq:D5grading}
 \begin{matrix}
 E_1 \leftrightarrow (1,1,0,0,0,0), \,
 E_2 \leftrightarrow (1,0,1,0,0,0),
\, \ldots \,,\,
 E_5 \leftrightarrow (1,0,0,0,0,1),   \\
 F_{12} \leftrightarrow (1,0,0,1,1,1) , \,
  F_{13} \leftrightarrow (1,0,1,0,1,1) , \,
\, \ldots\, , \,
 F_{45} \leftrightarrow (1,1,1,1,0,0), \\
   G \leftrightarrow (1,1,1,1,1,1) . 
 \end{matrix}
 \end{equation}
 This bijection defines the grading 
 via the Picard group $\Z^6$.
 We regard the $p_{ij}$ as scalars, so they have degree $0$.
Generators of $I_{\mathcal{Y}}$ that are listed in the same group have the 
same $\Z^6$ degrees. 
The action of $W({\rm D}_5)$ on the demicube ${\rm D}_5$ gives the 
action on the $16$ variables.

Consider now a particular smooth 
del Pezzo surface $X$ of degree $4$ over the field $K$,
so the $p_{ij}$ are scalars in $K$ that satisfy
the Pl\"ucker relations in the Base Group.
The universal Cox ideal $I_\mathcal{Y}$ specializes to
 the Cox ideal $I_X$ for the particular surface $X$. That Cox ideal is
minimally generated by $20$ quadrics, two per group.
The surface $X^0$  is the zero set of the ideal $I_X$
inside $(\bar{K}^*)^{16}/(\bar{K}^*)^6$. The torus action is obtained from
(\ref{eq:D5grading}), in analogy to  (\ref{embeddeg5}).

\begin{proof}[Proof of Proposition~\ref{prop:deg45}]
We  computed ${\rm trop}(X^0)$ by
applying {\tt gfan} \cite{jensen}
 to the ideal $I_X$.
If $K = \mathbb{Q}$ with the trivial valuation then the output
is the cone over the {\em Clebsch graph}  in Figure~\ref{fig:clebsch}.
This $5$-regular graph records which pairs of $(-1)$-curves intersect on $X$.
This also works over a field $K$
with  non-trivial valuation. The software {\tt gfan} uses
$K = \mathbb{Q}(t)$. If the vector
 $(p_{12},\ldots,p_{45})$ tropicalizes into the
 interior of an edge in the Petersen graph
then $ {\rm trop}(X^0)$ is the
tropical surface described in Proposition~\ref{prop:deg45}.
Each node in Figure \ref{fig:clebsch}
is replaced by a trivalent tree on $5$ nodes
according to the color coding explained in Section~\ref{sec:intro}.
The surface ${\rm trop}(X^0)$ can also be determined by tropical modifications,
as in Section \ref{sec:mod}.
\end{proof}

The same tropicalization method works  for 
the universal family $\mathcal{Y}^0$. 
Its ideal $I_{\mathcal{Y}}$ is given by the
$45$ polynomials in $26$ variables  listed above,
and $\mathcal{Y}^0$ is the
zero set of  $I_\mathcal{Y}$  in the 
$15$-dimensional torus
$\,(\bar{K}^*)^{10}/(\bar{K}^*)^5 \times (\bar{K}^*)^{16}/(\bar{K}^*)^6$.
The tropical universal del Pezzo surface ${\rm trop}(\mathcal{Y}^0)$ 
is a $4$-dimensional fan in $\R^{26}/\R^{11}$.
We compute it by
applying {\tt gfan} to the {\em universal Cox ideal}
$I_{\mathcal{Y}}$.
 The Gr\"obner fan structure on
 ${\rm trop}(I_\mathcal{Y})$ has  f-vector $(76, 630, 1620, 1215)$.
It  is isomorphic   to the {\em Naruki fan} 
described in \cite[Table 5]{RSS} and discussed further in Section~3.

\medskip

\noindent {\bf Del Pezzo Surfaces of Degree $3$ (Cubic Surfaces)} \\
There exists a cuspidal cubic through any six points in $\PP^2$.
See e.g.~\cite[(4.4)]{RSSS} and \cite[(6.1)]{RSS}.
Hence any configuration of six points in $\PP^2$ can be represented by the columns
of a matrix 
$$                                                                                
D \,\,\,= \,\,\,                                                                  
\begin{pmatrix} 1 & 1 & 1 & 1 & 1 & 1 \\ d_1 & d_2 & d_3 & d_4 & d_5 & d_6  \\\   
 d_1^3 & d_2^3 & d_3^3 & d_4^3 & d_5^3 & d_6^3 \end{pmatrix}   .                   
$$
The maximal minors of the matrix $D$ factor into linear forms,
\begin{equation}
\label{eq:degg4}
[ijk] \quad = \quad (d_i-d_j)(d_i-d_k)(d_j-d_k) (d_i+d_j+d_k),
\end{equation}
and so does the condition for the six points to lie on a conic:
\begin{equation}
\label{eq:degg16}
\begin{matrix} [{\rm conic}] \, &=\!\!& \!\!\!\!
[134][156][235][246] -  [135][146][234][256] \qquad \\ \,\,& = \!\!&
\qquad (d_1+d_2+d_3+d_4+d_5+d_6) \cdot \prod_{1 \leq i < j \leq 6}  (d_i-d_j).
\end{matrix}
\end{equation}

The linear factors in these expressions form the root system of type $\mathrm{E}_6$.
This corresponds to an arrangement of $36$ hyperplanes in $\PP^5$.
Similarly, the arrangement of type
$\mathrm{E}_7$ consists of $63$ hyperplanes in $\PP^6$, as in
\cite[(4.4)]{RSSS}.
To be precise, for $m=6,7$, the roots for $\mathrm{E}_m$ are
\begin{equation}
\label{eq:roots}
 \begin{matrix}
d_i-d_j &  \hbox{for} &  1\leq{}i<j\leq{}m, \\
d_i+d_j+d_k &  \hbox{for} & 1\leq{}i<j<k\leq{}m, \\
d_{i_1}+d_{i_2}+\dotsb{}+d_{i_6} & \hbox{for} & 1\leq{}i_1<i_2<\dotsb{}<i_6\leq{}m.
 \end{matrix} 
 \end{equation}
 Linear dependencies among these  linear forms specify a matroid of rank~$m$,
 also denoted ${\rm E}_m$.
 
The moduli space of marked cubic surfaces is 
the $4$-dimensional {\em Yoshida variety} $\,\mathcal{Y} $ defined in  \cite[\S 6]{RSS}.
 It coincides with the subvariety  $\mathcal{Y}^0$ 
of $(\bar{K}^*)^{26}/(\bar{K}^*)^{11}$  cut out
by the $45$ trinomials  in Proposition \ref{prop:IXdeg4}.
This is the embedding of $\mathcal{Y}^0$ in its intrinsic torus,
as in \cite[Theorem 6.1]{RSS}, and it differs from 
the embedding of $\mathcal{Y}^0$  into 
$(\bar{K}^*)^{40}/(\bar{K}^*)$  referred to 
 in Section~\ref{sec2} below.

 The universal family  for cubic surfaces is denoted by $\mathcal{G}^0$.
This is the open part of the {\em G\"opel variety} $\mathcal{G} \subset \PP^{134}$ constructed 
in \cite[\S 5]{RSSS}. The base of this $6$-dimensional family is the $4$-dimensional $\mathcal{Y}^0$. 
The map $\mathcal{G}^0 \rightarrow \mathcal{Y}^0$ was described in \cite{HKT}.
Thus the ring $K[\mathcal{Y}]$  in (\ref{eq:cox16}) is the natural base ring for 
the universal Cox ring for degree $3$ surfaces.

At this point it is essential to avoid confusing notation.
To  aim for a clear presentation, we use
the uniformization of $\mathcal{Y}$ by the $\mathrm{E}_6$ hyperplane arrangement.
Namely, we take
$R = \mathbb{Z}[d_1,d_2,d_3,d_4,d_5,d_6]$ instead of
 $K[\mathcal{Y}]$ as the base ring.
 We write $\mathcal{X}$ for the universal cubic surface over $R$.
The {\em universal Cox ring} is a quotient of the polynomial ring over $R$ in $27$ variables,
one for each line on the cubic surface. 
Using variable names as in \cite[\S 5]{SX}, we write
\begin{equation}
\label{eq:cox27} {\rm Cox}(\mathcal{X}) \, = \, R[E_1,E_2,\ldots,E_6,F_{12}, 
F_{13},\ldots, F_{56}, G_1 ,G_2, \ldots, G_6]/I_\mathcal{X}.
\end{equation}
This ring is graded by the Picard group $\Z^7$, similarly to  (\ref{eq:D5grading}).
The role of the $5$-dimensional demicube $\mathrm{D}_5$ is now played by the
$6$-dimensional {\em Gosset polytope} with $27$ vertices, here also 
denoted by $\mathrm{E}_6$. The symmetry group of this polytope is the Weyl group $W(\mathrm{E}_6)$.

\begin{proposition}  \label{prop:IXdeg3}
Up to saturation by the product of all $\,27$ variables and all $\,36$ roots,
 the universal Cox ideal $I_\mathcal{X}$ is generated
by  $270$ trinomials. These are
clustered by $\Z^7$-degrees into $27$ groups of $10$ generators,
one for each line on the cubic surface.
For instance, the $10$ generators of 
$I_\mathcal{X}$ that correspond to the line $G_1$
involve the $10$ lines that meet $G_1$. They are
$$    \!\!\!\!\!\!\!
\begin{matrix}
(d_3{-}d_4)(d_1{+}d_3{+}d_4)E_2F_{12} - (d_2{-}d_4)(d_1{+}d_2{+}d_4)E_3F_{13}
 + (d_2{-}d_3)(d_1{+}d_2{+}d_3)E_4F_{14} ,\\
(d_3{-}d_5)(d_1{+}d_3{+}d_5)E_2F_{12} - (d_2{-}d_5)(d_1{+}d_2{+}d_5)E_3F_{13}
+ (d_2{-}d_3)(d_1{+}d_2{+}d_3)E_5F_{15} ,\\
(d_3{-}d_6)(d_1{+}d_3{+}d_6)E_2F_{12} - (d_2{-}d_6)(d_1{+}d_2{+}d_6)E_3F_{13}
 + (d_2{-}d_3)(d_1{+}d_2{+}d_3)E_6F_{16} , \\
(d_4{-}d_5)(d_1{+}d_4{+}d_5)E_2F_{12} - (d_2{-}d_5)(d_1{+}d_2{+}d_5)E_4F_{14}
+ (d_2{-}d_4)(d_1{+}d_2{+}d_4)E_5F_{15} , \\
(d_4{-}d_6)(d_1{+}d_4{+}d_6)E_2F_{12} - (d_2{-}d_6)(d_1{+}d_2{+}d_6)E_4F_{14}
 + (d_2{-}d_4)(d_1{+}d_2{+}d_4)E_6F_{16} , \\
(d_5{-}d_6)(d_1{+}d_5{+}d_6)E_2F_{12} - (d_2{-}d_6)(d_1{+}d_2{+}d_6)E_5F_{15}
 + (d_2{-}d_5)(d_1{+}d_2{+}d_5)E_6F_{16} , \\
(d_4{-}d_5)(d_1{+}d_4{+}d_5)E_3F_{13} - (d_3{-}d_5)(d_1{+}d_3{+}d_5)E_4F_{14}
 + (d_3{-}d_4)(d_1{+}d_3{+}d_4)E_5F_{15} , \\
(d_4{-}d_6)(d_1{+}d_4{+}d_6)E_3F_{13} - (d_3{-}d_6)(d_1{+}d_3{+}d_6)E_4F_{14}
 + (d_3{-}d_4)(d_1{+}d_3{+}d_4)E_6F_{16} ,\\
(d_5{-}d_6)(d_1{+}d_5{+}d_6)E_3F_{13} - (d_3{-}d_6)(d_1{+}d_3{+}d_6)E_5F_{15}
+ (d_3{-}d_5)(d_1{+}d_3{+}d_5)E_6F_{16} ,\\
(d_5{-}d_6)(d_1{+}d_5{+}d_6)E_4F_{14} - (d_4{-}d_6)(d_1{+}d_4{+}d_6)E_5F_{15}
+ (d_4{-}d_5)(d_1{+}d_4{+}d_5)E_6F_{16}.
\end{matrix}
$$
The remaining $260$ trinomials are obtained by applying the
action of $W(\mathrm{E}_6)$. The variety
defined by $I_\mathcal{X}$ in
$\PP^5 \times (\bar{K}^*)^{27}/(\bar{K}^*)^{7}$ 
is $6$-dimensional. It is  the universal family $\mathcal{X}^0$.
\end{proposition}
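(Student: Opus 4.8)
The plan is to realize the ten generators attached to each line as the relations inside a single graded piece of the Cox ring, namely the two-dimensional space of sections of the associated conic-bundle class. I would first record the intersection combinatorics: on a cubic surface each of the $27$ lines $L$ meets exactly $10$ others, and these $10$ split into $5$ pairs $\{A,B\}$ with $A+B = -K-L$, the class of the pencil $|{-}K{-}L|$ of conics meeting $L$ once. For $L=G_1$ one has $-K-G_1 = \ell - E_1$, the pencil of lines through the first point, and the five pairs are $\{E_i,F_{1i}\}$ for $i\in\{2,3,4,5,6\}$, since $E_i+F_{1i}=\ell-E_1$. Because $\dim_K H^0(X,-K-L)=2$, the five products $x_Ax_B$ of Cox variables all lie in a two-dimensional space, so any three of them are linearly dependent; the $\binom{5}{3}=10$ three-term dependencies are exactly the $10$ trinomials of the group. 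This accounts both for the shape of the generators (trinomials rather than general quadrics) and for their number.

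The heart of the computation is to pin down the coefficients, and this is where the cuspidal-cubic uniformization by the columns $p_i=(1,d_i,d_i^3)$ of $D$ enters. For $L=G_1$ the section $x_{E_i}x_{F_{1i}}\in H^0(X,\ell-E_1)$ corresponds to the line $\overline{p_1p_i}$, i.e.\ to $p_1\times p_i$. I would compute $p_1\times p_i = (d_i-d_1)\bigl(d_1d_i(d_1+d_i),\,-(d_1^2+d_1d_i+d_i^2),\,1\bigr)$ and read off the class in the pencil $\{a+bd_1+cd_1^3=0\}$: in the resulting affine chart the five sections become the points $b_i=-(d_1^2+d_1d_i+d_i^2)$. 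A three-term dependence among $b_i,b_j,b_k$ then has coefficients $b_j-b_k=-(d_j-d_k)(d_1+d_j+d_k)$ and cyclic, which are products of $\mathrm{E}_6$-roots from (\ref{eq:roots}) and reproduce the displayed trinomials for $G_1$ up to an overall sign. The remaining $260$ trinomials follow because $I_\mathcal{X}$ is $W(\mathrm{E}_6)$-invariant and $W(\mathrm{E}_6)$ acts transitively on the $27$ lines with stabilizer $W(\mathrm{D}_5)$, so the orbit of the $G_1$-group is exactly $27$ groups of $10$.

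To upgrade ``these trinomials lie in $I_\mathcal{X}$'' to ``they generate $I_\mathcal{X}$ up to saturation'' I would invoke the Batyrev--Popov theorem, proved in \cite{STV, SX}, that for each fixed smooth cubic surface the Cox ideal is generated by these conic-bundle quadrics; specializing at any $d$ off the $\mathrm{E}_6$-arrangement turns the $270$ trinomials into such a generating set of $I_{X_d}$. Universality then follows by a flatness and semicontinuity argument over the arrangement complement, or by a direct Gr\"obner computation with {\tt gfan} over $\mathbb{Q}(d_1,\ldots,d_6)$; the two saturations --- by the product of the $27$ variables and by the product of the $36$ roots --- encode passing to the intrinsic torus and restricting to the complement of the arrangement, respectively. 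For the final dimension count I would use the fibration $\mathcal{X}^0\to\mathcal{Y}^0$ whose base is the $4$-dimensional Yoshida variety and whose fibers are the $2$-dimensional very affine surfaces $X^0$, identifying $\mathcal{X}^0$ with the universal family $\mathcal{G}^0$ of \cite{RSSS, HKT}; here one must keep in mind that the $\PP^5$ of $d$-coordinates covers the $4$-dimensional moduli with one-dimensional fibers, the pencil of cuspidal cubics through six general points.

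The main obstacle is the completeness and universality step. The per-surface generation by conic-bundle quadrics is classical, but one must verify that the same ten trinomials per line suffice uniformly as $d$ varies --- that no extra generators are forced along the arrangement and that the family is flat over the complement --- which is what makes the passage from $I_{X_d}$ to $I_\mathcal{X}$ nontrivial. By comparison, the coefficient bookkeeping (matching signs and root factors across all $W(\mathrm{E}_6)$-orbits so as to land precisely on the trinomials as displayed) is routine but must be carried out carefully.
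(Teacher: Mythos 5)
Your proposal is correct in outline, and its key computation checks out, but it takes a genuinely different route from the paper. The paper proceeds top-down from the G\"opel variety: by \cite[Theorem 6.2]{RSSS}, $\mathcal{G}$ is the ideal-theoretic intersection of a toric variety $\mathcal{T}$ with a linear space cut out by $315$ linear trinomials indexed by the isotropic planes in $(\F_2)^6$; pulling these back to the Cox ring of $\mathcal{T}$ gives $315$ quartic trinomials in the $63$ root variables of $\mathrm{E}_7$, and the $270$ of them containing a root involving $d_7$ become, under the substitution (\ref{eq:dtoEFG}), precisely the displayed generators. Generation on the torus is thus inherited wholesale from the known presentation of $\mathcal{G}$, with no appeal to Batyrev--Popov and no flatness argument, and as a bonus the remaining $45$ quartics yield Proposition~\ref{prop:IXdeg4}. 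You instead build bottom-up: the pencil $|{-}K-G_1|=|\ell-E_1|$ explains the trinomial shape (five products $E_iF_{1i}$ in a $2$-dimensional space of sections give $\binom{5}{3}=10$ dependencies), and your cuspidal-cubic computation of $p_1\times p_i$ pins down the coefficients --- indeed $b_i=-(d_1^2+d_1d_i+d_i^2)$ and $b_j-b_k=-(d_j-d_k)(d_1+d_j+d_k)$ reproduce the displayed trinomials, modulo stripping the unit factor $(d_i-d_1)$, a rescaling of variables that is harmless exactly because of the saturation by roots (consistent with the paper's remark that (\ref{eq:dtoEFG}) involves choices). The trade-off is clear: your route makes the geometric meaning of each group of ten transparent and relies only on the classical per-surface generation theorem \cite{BatPop,STV,SX}, but the universality step you yourself flag as the main obstacle is the weakest link --- fiberwise generation does give the variety-level statement readily, since off the arrangement each specialized group still spans the full $3$-dimensional space of relations in its degree, yet upgrading this to scheme-theoretic generation up to saturation needs the Nakayama/flatness or direct Gr\"obner argument you only sketch, and this is precisely what the paper's route avoids by importing \cite[Theorem 6.2]{RSSS}. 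Your closing dimension count, identifying $\mathcal{X}^0$ with $\mathcal{G}^0$ fibered in surfaces over the $4$-dimensional $\mathcal{Y}^0$ and noting that the $\PP^5$ of $d$-coordinates covers the moduli with $1$-dimensional fibers of cuspidal cubics, matches the paper's intent.
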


\begin{proof}[Proof of Propositions \ref{prop:IXdeg4} and \ref{prop:IXdeg3}]
We consider the prime ideal in \cite[\S 6]{RSSS} that defines the embedding 
of the G\"opel variety $\mathcal{G}$ into $\PP^{134}$. By
 \cite[Theorem 6.2]{RSSS}, $\mathcal{G}$ is the ideal-theoretic
 intersection of a $35$-dimensional toric variety $\mathcal{T}$ and
 a $14$-dimensional linear space~$L$. The latter is cut out
 by a canonical set of $315 $ linear trinomials,
  indexed by the $315$ isotropic planes in $(\mathbb{F}_2)^6$.
Pulling these linear forms back to the Cox ring of $\mathcal{T}$,
we obtain $315$ quartic trinomials in $63$ variables, one for
each root of $\mathrm{E}_7$. Of these $63$ roots, precisely $27$ involve
the unknown $d_7$. We identify these with the
$(-1)$-curves on the cubic surface~via
\begin{equation}
\label{eq:dtoEFG} d_i - d_7 \,\mapsto\, E_i  , \qquad
 d_i+d_j + d_7 \,\mapsto\, F_{ij}  , \qquad
- d_j + \sum_{i=1}^7 d_i  \,\mapsto \, G_j. 
\end{equation}
Moreover, of the $315$ quartics, precisely $270$ contain a root 
involving $d_7$. Their images under the map (\ref{eq:dtoEFG})  
are the $270$ Cox relations
listed above. Our construction ensures that they generate the correct
Laurent polynomial ideal on
the torus of $\mathcal{T}$. 
This  proves Proposition~\ref{prop:IXdeg3}.

The derivation of Proposition  \ref{prop:IXdeg4} is similar, but now we use the substitution
$$ d_i - d_6 \,\mapsto\, E_i  , \qquad
 d_i+d_j + d_6 \,\mapsto\, F_{ij}  , \qquad
\sum_{i=1}^6 d_i \,\mapsto \, G. $$
We consider the $45$ quartic trinomials that do not involve $d_7$.
Of these, precisely five do not involve $d_6$ either. They translate
into the five Pl\"ucker relations for $M_{0,5}$. With this identification,
the remaining $40$ quartics translate into the ten groups
listed after Proposition~\ref{prop:IXdeg4}.
\end{proof}

\begin{remark} \rm
The relations (\ref{eq:dtoEFG}) is not unique. The non-uniqueness comes from both the symmetry of $\mathrm{E}_7$ and the choice of $\pm{}$ sign for each variable $E_i, F_{ij}, G_j$. For the rest of this paper, we fix the relations (\ref{eq:dtoEFG}). 
Other choices give the same result up to symmetry.
\end{remark}

We now fix a $K$-valued point in the base $\mathcal{Y}^0$,
by replacing the unknowns $d_i$ with scalars in $K$. 
In order for the resulting surface $X$ to be smooth,
we require $(d_1:d_2:d_3:d_4:d_5:d_6)$ to be in the complement
of the $36$ hyperplanes for $\mathrm{E}_6$.
The corresponding specialization of $I_\mathcal{X}$ is the
Cox ideal $I_X$ of $X$. Seven of the ten trinomials in each degree are redundant
over $K$.
Only three are needed to generate $I_X$. Hence,
the Cox ideal $I_X$ is minimally generated by $81$ quadrics
in the $E_i, F_{ij}$ and $G_i$. 
Its variety is the surface
$\, X^0 = V(I_X) \subset  (\bar{K}^*)^{27}/(\bar{K}^*)^{7}$.

\begin{proposition}\label{prop:invol}
Each of the marked $27$  trees on a tropical cubic surface has an  involution.
\end{proposition}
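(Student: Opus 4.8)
The plan is to realize the required symmetry of the $10$-leaf tree $T_C$ attached to a line $C$ as coming from an honest Möbius involution of $C\cong\PP^1$ that permutes the $10$ marked points. Recall that the $10$ lines meeting $C$ fall into $5$ \emph{tritangent pairs}: for $C=G_1$ these are $\{E_i,F_{1i}\}$ with $i\in\{2,3,4,5,6\}$, where $G_1,E_i,F_{1i}$ bound a tritangent plane. I will show that there is an element $\tau\in\mathrm{PGL}_2$ with $\tau^2=1$ that swaps the two marked points in each tritangent pair, and then argue that $\tau$ induces an involution of $T_C$. By Proposition~\ref{prop:IXdeg3} the $10$ Cox relations attached to \emph{any} line have the same shape as those for $G_1$ (they are $W(\mathrm{E}_6)$-translates), so it suffices to run the argument for $C=G_1$; the conclusion then transports verbatim to all $27$ lines of the fixed surface $X$.

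First I would restrict the relevant Cox sections to the line $G_1$. Since $E_i\cdot G_1=F_{1i}\cdot G_1=1$, the restrictions $E_i|_{G_1}$ and $F_{1i}|_{G_1}$ are linear forms on $G_1\cong\PP^1$, vanishing respectively at the marked points $P_i:=G_1\cap E_i$ and $Q_i:=G_1\cap F_{1i}$. Hence each product $w_i:=(E_iF_{1i})|_{G_1}$ is a binary quadratic whose zero locus is exactly the tritangent pair $\{P_i,Q_i\}$. The ten trinomials listed for $G_1$ in Proposition~\ref{prop:IXdeg3} involve only the products $E_jF_{1j}$, one trinomial for each $3$-subset $\{a,b,c\}\subseteq\{2,3,4,5,6\}$, with coefficients such as $(d_b-d_c)(d_1+d_b+d_c)$ that are nonzero for smooth $X$. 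Restricting to $G_1$, each such trinomial becomes a genuine linear dependence among $w_a,w_b,w_c$.

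The key step is to read off the involution from these dependencies. Because every triple of the $w_i$ is linearly dependent while no two are proportional (their zero pairs are distinct), the five quadratics $w_2,\dots,w_6$ span a two-dimensional space, that is, a pencil of binary quadratics. This pencil is base-point-free, since $w_2$ and $w_3$ already have disjoint zero sets. A base-point-free pencil of binary quadratics is a $g^1_2$ on $\PP^1$, hence defines a degree-two map $G_1\to\PP^1$ whose deck transformation is an involution $\tau\in\mathrm{PGL}_2$; every member of the pencil cuts out a $\tau$-orbit. In particular each $w_i$ vanishes along a $\tau$-orbit, so $\tau(P_i)=Q_i$ for all $i$: the involution $\tau$ swaps $G_1\cap E_i$ with $G_1\cap F_{1i}$, exactly the tritangent pairing. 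Since $\tau$ is already determined by its action on two of the pairs, which consist of $K$-points, it is defined over $K$.

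It remains to pass from the Möbius involution to the tropical tree. As recalled in the proof of the Corollary in Section~\ref{sec:intro}, the metric tree $T_{G_1}$ is reconstructed from the valuations of the cross-ratios of the ten marked points. A Möbius transformation preserves every cross-ratio, so $\tau$ preserves all these valuations; the induced permutation of the ten leaves, namely the product of the five transpositions $(E_i\,F_{1i})$, is therefore an isometry of $T_{G_1}$, and it is an involution because $\tau^2=1$. Transporting by $W(\mathrm{E}_6)$ then gives the statement for all $27$ trees. I expect the main obstacle to be the bookkeeping that the restricted trinomials really do force the full pencil structure, with all $\binom{5}{3}$ dependencies having nonvanishing coefficients, together with the base-point-freeness needed to guarantee that $\tau$ is a genuine involution rather than a degenerate map; once the pencil is in hand, cross-ratio invariance makes the tropical conclusion immediate.
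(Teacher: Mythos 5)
Your proof is correct, and while it lands on the same underlying geometry as the paper --- the degree-two map from the line to $\PP^1$ whose fibers are the five tritangent pairs, with the involution as its deck transformation --- you reach that map by a genuinely different route. The paper's proof is synthetic: it simply invokes the classical double cover $L \to \PP^1$ (coming from the pencil of planes through $L$, with the five tritangent planes as marked fibers) and tropicalizes it as a double cover of trees. You instead manufacture the cover algebraically from Proposition~\ref{prop:IXdeg3}: restricting the ten trinomials to $G_1$ exhibits the five binary quadrics $w_i=(E_iF_{1i})|_{G_1}$ as spanning a pencil, whose base-point-freeness gives a $g^1_2$ and hence the involution $\tau\in\mathrm{PGL}_2(K)$. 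Interestingly, the paper records exactly this structure, but only \emph{after} its proof, as a remark (``this involution fixes the $10$ Cox relations'' and the line in $\mathrm{Proj}(K[E_2F_{12},\ldots,E_6F_{16}])\simeq\PP^4$); you have in effect promoted that remark to the proof itself. What your route buys: it is self-contained given Proposition~\ref{prop:IXdeg3} plus the intersection pairing (it \emph{derives} the tritangent pairing rather than quoting it), it produces $\tau$ explicitly and over $K$, and it degrades gracefully at Eckhart points ($w_i$ becomes a square, the pencil stays base-point-free since $E_i\cdot E_j=E_i\cdot F_{1j}=F_{1i}\cdot F_{1j}=0$ for $i\neq j$ forces distinct pairs to be disjoint on $G_1$, and $\tau$ merely acquires a fixed leaf). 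What the paper's route buys: brevity, and an argument that applies verbatim to every one of the $27$ lines with no need for the $W(\mathrm{E}_6)$-transport you use to reduce to $G_1$. Two small simplifications to your bookkeeping: you do not need all $\binom{5}{3}$ dependencies with nonvanishing coefficients --- once $w_2\not\propto w_3$ (which the disjointness of zero sets gives), the three triples $\{2,3,i\}$ already place every $w_i$ in the pencil; and the nonvanishing of the coefficients is immediate since smoothness requires $(d_1:\cdots:d_6)$ to avoid the $36$ root hyperplanes of $\mathrm{E}_6$. Your final step, passing from $\tau$ to an isometry of the $10$-leaf tree via invariance of cross-ratio valuations, matches the mechanism the paper itself uses to read trees off cross-ratios, so the tropical conclusion is sound.
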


\begin{proof}
 Every line $L$ on a cubic surface
 $X$ over $K$,  with its ten marked points,  admits  a double cover to  $\PP^1$ with five markings. The preimage of one of these marked points is the pair of markings on $L$ given by two other lines forming a  tritangent  with $L$. Tropically, this gives a double cover from the $10$-leaf tree for $L$ to a $5$-leaf tree with leaf labelings given by these pairs. The desired involution on the $10$-leaf tree exchanges elements in each pair.
 \end{proof}
 
For instance, for the tree that corresponds to the line $L = G_1$, the involution equals
$$ 
E_2 \leftrightarrow F_{12}, \,\,\,
E_3 \leftrightarrow F_{13}, \,\,\,
E_4 \leftrightarrow F_{14}, \,\,\,
E_5 \leftrightarrow F_{15}, \,\,\,
E_6 \leftrightarrow F_{16}. $$
Indeed, this involution fixes the  $10$ Cox relations displayed in Proposition \ref{prop:IXdeg3}. The induced action on the trees corresponding to the tropicalization of the lines can be seen in Figures \ref{fig:treeaaaa} and \ref{fig:treeaaab}, where the involution reflects about a vertical axis.
 The corresponding $5$-leaf tree is the tropicalization of the line   in
    $$\mathrm{Proj}(K[E_2F_{12},E_3F_{13},E_4F_{14},E_5F_{15},E_6F_{16}])
\,\,\, \simeq \,\,\, \PP^4 $$ 
 that is the intersection of  the $10$ hyperplanes   
defined by the polynomials in Proposition~\ref{prop:IXdeg3}.

We aim to compute ${\rm trop}(X^0)$ by
applying {\tt gfan} to the ideal $I_X$.
This works well for $K = \mathbb{Q}$ with the trivial valuation.
Here the output is the cone over the {\em Schl\"afli graph}
which records which pairs of $(-1)$-curves intersect on $X$.
This is a $10$-regular graph with $27$ nodes.
However, for $K = \mathbb{Q}(t)$, 
our  {\tt gfan} calculations did not terminate. 
 Future implementations of tropical algorithms
 will surely succeed; see also Conjecture~\ref{conj:tropbasis}.
To get the tropical cubic surfaces,
and to prove Theorem~\ref{thm:deg3}, 
 we used the alternative method explained in Section \ref{sec2}.

\section{Sekiguchi Fan to Naruki Fan}
\label{sec2}

In the previous section we discussed the computation of  tropical
del Pezzo surfaces
directly from their Cox ideals. This worked well for degree $4$. However, 
 using  the current implementation in {\tt gfan}, this
 computation did not terminate for degree $3$.
We here discuss an alternative method that did succeed.
In particular, we present the proof of~Theorem \ref{thm:deg3}.

The successful computation uses the following commutative diagram of balanced fans:
\begin{equation}
\label{equation:fan_commutative_diagram}
\begin{CD}
\mathrm{Berg}(\mathrm{E}_7) @>>> \mathrm{trop}(\mathcal{G}^0)\\
@VVV @VVV\\
\mathrm{Berg}(\mathrm{E}_6) @>>> \mathrm{trop}(\mathcal{Y}^0)
\end{CD}
\end{equation}
This diagram was first derived by Hacking {\it et al.}~\cite{HKT},
in their study of moduli spaces of marked del Pezzo surfaces.
Combinatorial details were worked out by Ren {\it et al.}~in~\cite[\S 6]{RSS}.
The material that follows completes the program that was 
suggested at the very end of~\cite{RSS}.

The notation $\mathrm{Berg}({\rm E}_m)$ denotes the {\em Bergman fan} of the
rank $m$ matroid defined by the ($36$ resp.~$63$) linear forms listed in (\ref{eq:roots}).
Thus, $\mathrm{Berg}(\mathrm{E}_6)$ is a tropical linear space in $\T\PP^{35}$, 
and $\mathrm{Berg}(\mathrm{E}_7)$ is a tropical linear space in $\T\PP^{62}$.
Coordinates are labeled by roots.

The list (\ref{eq:roots}) fixes a choice of  injection
of root systems $\mathrm{E}_6\hookrightarrow{}\mathrm{E}_7$. This defines 
coordinate projections $\mathbb{R}^{63}\to{}\mathbb{R}^{36}$ 
and $\T\PP^{62} \dashrightarrow \T\PP^{35}$,
namely by deleting coordinates with index $7$.
This projection induces  the vertical map from $\mathrm{Berg}(\mathrm{E}_7)$ to $\mathrm{Berg}(\mathrm{E}_6)$
on the left in (\ref{equation:fan_commutative_diagram}).

On the right in (\ref{equation:fan_commutative_diagram}), we see
 the $4$-dimensional {\em Yoshida variety} $\mathcal{Y} \subset \PP^{39}$
and the  $6$-dimensional {\em G\"opel variety} $\mathcal{G} \subset \PP^{134}$. 
Explicit parametrizations and equations for these varieties were presented in
 \cite{RSSS, RSS}.  The corresponding very affine varieties
 $\mathcal{G}^0 \subset ({\bar K}^*)^{135}/{\bar K}^*$ and
 $\mathcal{Y}^0 \subset ({\bar K}^*)^{40}/{\bar K}^*$ 
  are moduli spaces of marked del Pezzo surfaces \cite{HKT}.
   Their tropicalizations $\mathrm{trop}(\mathcal{G}^0)$ and 
 $\mathrm{trop}(\mathcal{Y}^0)$ are known as the {\it Sekiguchi fan} and {\it Naruki fan}, respectively.
 The modular interpretation in \cite{HKT} ensures the existence of the vertical map
 $\mathrm{trop}(\mathcal{G}^0)\to{}\mathrm{trop}(\mathcal{Y}^0)$.
 This map is described in  \cite[Lemma 5.4]{HKT}, in~\cite[(6.5)]{RSS},
 and in the proof of Lemma~\ref{lem:bignumbers} below.

 The two horizontal maps in
(\ref{equation:fan_commutative_diagram}) are  surjective
and (classically) linear.
  The linear map $\mathrm{Berg}(\mathrm{E}_7)\to{}\mathrm{trop}(\mathcal{G}^0)$ is given
         by the  $135\times{}63$ matrix $A$
   in \cite[\S 6]{RSSS}. The corresponding toric variety is
  the object of \cite[Theorem 6.1]{RSSS}.
The map  
   $\mathrm{Berg}(\mathrm{E}_6)\to{}\mathrm{trop}(\mathcal{Y}^0)$ is given
   by the $40 \times 36$-matrix in \cite[Theorem 6.1]{RSS}.
We record  the following computational result.
It refers to the  natural simplicial fan structure on
  $\mathrm{Berg}(E_m)$ described by
  Ardila {\it et al.}~in \cite{ARW}.
  
\begin{lemma}
\label{lem:bignumbers}
The Bergman fans of $\mathrm{E}_6$ and $\mathrm{E}_7$ have dimensions $5$ and $6$.
Their f-vectors are
$$ \begin{matrix}
f_{\mathrm{Berg}(\mathrm{E}_6)} &=& (1, 750, 17679, 105930, 219240, 142560) , \qquad \qquad \\
f_{\mathrm{Berg}(\mathrm{E}_7)} &=&  (1, 6091, 315399, 3639804, 14982660, 24607800, 13721400).
\end{matrix} 
$$
The moduli fans  $\mathrm{trop}(\mathcal{Y}^0)$ and $\mathrm{trop}(\mathcal{G}^0)$ 
have dimensions $4$ and $6$. Their f-vectors are
$$ \begin{matrix}
f_{\mathrm{trop}(\mathcal{Y}^0)} &=&  (1, 76, 630, 1620, 1215), \qquad \qquad \\
f_{\mathrm{trop}(\mathcal{G}^0)} &=& \qquad  \quad (1, 1065, 27867, 229243, 767025, 1093365, 547155).
\end{matrix}
$$
\end{lemma}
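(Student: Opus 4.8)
The plan is to split the four fans into two groups: the two Bergman fans, which are purely matroidal objects, and the two moduli fans, which I obtain as images under the horizontal maps of diagram~(\ref{equation:fan_commutative_diagram}). For $\mathrm{Berg}(\mathrm{E}_6)$ and $\mathrm{Berg}(\mathrm{E}_7)$ I would work entirely within the combinatorics of the matroids $\mathrm{E}_6$ and $\mathrm{E}_7$, whose lattices of flats are the intersection lattices of the corresponding reflection arrangements. In the simplicial fan structure of \cite{ARW}, which is the order complex of the proper part of the lattice of flats, the $k$-dimensional faces of the Bergman complex are the flags $F_1 \subsetneq F_2 \subsetneq \cdots \subsetneq F_{k+1}$ of proper nonempty flats; hence the rays correspond to proper nonempty flats and the maximal faces to the maximal flags. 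Since the matroids have ranks $6$ and $7$, the maximal flags contain $5$ and $6$ flats, giving the asserted fan dimensions $5$ and $6$. To extract the f-vectors I would first tabulate the Whitney numbers of the second kind, i.e. the number of flats of each rank, for the $\mathrm{E}_6$ and $\mathrm{E}_7$ intersection lattices, which are classical, and then count flags rank-by-rank as a weighted path-count through the ranked lattice, carried out by software enumerating nested sets.

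For the moduli fans the Naruki fan $\mathrm{trop}(\mathcal{Y}^0)$ already has the f-vector $(1,76,630,1620,1215)$ recorded in \cite[Table 5]{RSS}, where it appears as the Gr\"obner fan of $I_{\mathcal{Y}}$ and is matched with the combinatorial construction of \cite{HKT}; this is the entry I reuse. The Sekiguchi fan $\mathrm{trop}(\mathcal{G}^0)$ I would compute as the image of $\mathrm{Berg}(\mathrm{E}_7)$ under the classically linear top horizontal map of (\ref{equation:fan_commutative_diagram}), given by the $135 \times 63$ matrix $A$ of \cite[\S 6]{RSSS}. Because $A$ is monomial on the torus, its tropicalization is the induced linear map of fans, so $\mathrm{trop}(\mathcal{G}^0) = A \cdot \mathrm{Berg}(\mathrm{E}_7)$ as balanced fans. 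Concretely I would refine $\mathrm{Berg}(\mathrm{E}_7)$ so that $A$ is linear on each cone, push the cones forward, discard those whose image drops dimension, identify cones with equal image, and check balancedness at the end; the resulting f-vector is the stated one.

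The main obstacle is sheer size. The fan $\mathrm{Berg}(\mathrm{E}_7)$ has $13{,}721{,}400$ maximal cones and $\mathrm{trop}(\mathcal{G}^0)$ has $547{,}155$, so naive enumeration is infeasible and every step must be organized around the $W(\mathrm{E}_6)$- and $W(\mathrm{E}_7)$-symmetry: counting flats and flags orbit-by-orbit, pushing forward only orbit representatives of cones, and recovering global counts via orbit--stabilizer. The delicate point is verifying that the pushforward of $\mathrm{Berg}(\mathrm{E}_7)$ equals $\mathrm{trop}(\mathcal{G}^0)$ exactly, rather than a proper subfan or an overcount arising from merged cones, since on average about twenty-five fine cones cover each maximal Sekiguchi cone. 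I would confirm this using the modular interpretation of the vertical map in \cite{HKT} together with independent checks of a few orbit sizes against the Naruki fan via the bottom map of~(\ref{equation:fan_commutative_diagram}).
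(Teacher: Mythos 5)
Your treatment of the two moduli fans is essentially the paper's own route: the Naruki f-vector is indeed simply quoted from \cite[Table 5]{RSS}, and $\mathrm{trop}(\mathcal{G}^0)$ is obtained by pushing $\mathrm{Berg}(\mathrm{E}_7)$ forward along the linear map given by the $135\times 63$ matrix $A$, working with $W(\mathrm{E}_7)$-orbit representatives and then certifying that the candidate fan is exactly the image (each candidate cone a union of images of cones, pairwise intersections being faces). That part of your plan, including the warning that the pushforward must be verified to be neither a proper subfan nor an overcount, matches the paper's verification steps (1)--(3) almost verbatim.

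However, there is a genuine gap in your treatment of the Bergman fans. You assert that in the simplicial structure of \cite{ARW} the rays are \emph{all} proper nonempty flats and the $k$-faces are flags of flats. That describes the fine subdivision (the order complex of the lattice of flats, i.e.\ the nested-set fan for the maximal building set), not the structure used in the lemma. The structure of \cite{ARW} is the nested-set (``tubing'') fan for the \emph{minimal} building set, whose rays are only the irreducible flats --- for $\mathrm{E}_6$ these are the irreducible subroot systems, and their count is exactly $36\,(\mathrm{A}_1)+120\,(\mathrm{A}_2)+270\,(\mathrm{A}_3)+45\,(\mathrm{D}_4)+216\,(\mathrm{A}_4)+27\,(\mathrm{D}_5)+36\,(\mathrm{A}_5)=750$, matching the lemma, whereas the total number of proper flats (your proposed ray count) is strictly larger, since already the ranks $1$ and $2$ contribute $36+390$ flats. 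Consequently your flag-counting via Whitney numbers would compute the f-vector of the order complex, which is not the stated f-vector, so your first half proves the wrong numbers. Note also that the paper explicitly does \emph{not} use the tubing combinatorics: it runs a brute-force computation based on \cite{FS} and \cite{Rin}, testing membership in $\mathrm{Berg}(\mathrm{E}_7)$ via the criterion that the minimum is attained twice on each of the $100662348$ circuits, and building cones dimension by dimension from sums of rays, orbitwise under $W(\mathrm{E}_7)$; $\mathrm{Berg}(\mathrm{E}_6)$ and $\mathrm{trop}(\mathcal{Y}^0)$ are then derived through the left vertical and bottom horizontal maps of (\ref{equation:fan_commutative_diagram}). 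If you want to salvage your lattice-theoretic approach, you must replace flags of flats by nested sets of irreducible flats and count those, or else justify a membership test as the paper does.
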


\begin{proof}
The f-vector for  the Naruki fan
$\mathrm{trop}(\mathcal{Y}^0)$
appears in \cite[Table 5]{RSS}.
For the other three fans, only the numbers  of rays (namely
$750, 6091$ and $1065$) were known from \cite[\S 6]{RSS}.
The main new result in Lemma~\ref{lem:bignumbers} is the computation of all
$57273155$ cones
in $\mathrm{Berg}(\mathrm{E}_7)$.
The fans $\mathrm{Berg}(\mathrm{E}_6)$ and $\mathrm{trop}(\mathcal{G}^0)$
are subsequently 
derived from $\mathrm{Berg}(\mathrm{E}_7)$
using the maps in (\ref{equation:fan_commutative_diagram}).

We now describe how $f_{\mathrm{Berg}(\mathrm{E}_7)}$ was found.
We did not use the theory of tubings in \cite{ARW}. Instead,
we carried out a brute force computation based on \cite{FS} and \cite{Rin}. 
Recall that a point lies in the Bergman fan of a matroid if and only if the minimum is obtained twice on each circuit. We computed all circuits of the
 rank $7$ matroid on the $63$ vectors in the root system $\mathrm{E}_7$.
That matroid has precisely $100662348$ circuits. Their cardinalities range from $3$ to $8$. This 
furnishes a subroutine for deciding whether a given point lies in the Bergman fan.

Our computations were mostly done in {\tt sage} \cite{Sage} and {\tt java}. We achieved speed by
exploiting the action of the Weyl group
$W(\mathrm{E}_7)$ given by the two generators in \cite[(4.2)]{RSSS}. 
The two matrices derived from these two generators using \cite[(4.3)]{RSSS}
act on the space $\R^7$ with coordinates $d_1,d_2,\ldots,d_7$.
This gives subroutines for the action of $W(\mathrm{E}_7)$ on $\mathbb{R}^{63}$, e.g.~for 
deciding whether two given sequences of points are conjugate with respect to this action. 

Let $\mathbf{r}_1,\dotsc{},\mathbf{r}_{6091}$ denote the rays of $\mathrm{Berg}(\mathrm{E}_7)$,
as  in \cite[Table 2]{HKT} and \cite[\S 6]{RSS}.
 They form $11$ orbits under the action of $W(\mathrm{E}_7)$. For each orbit, we
 take  the representative $\mathbf{r}_i$ with smallest label. For each pair $i<j$ such that
 $\mathbf{r}_i$ is a representative, our program checks if
  $\mathbf{r}_i+\mathbf{r}_j$ lies in $\mathrm{Berg}(\mathrm{E}_7)$, using
    the precomputed list of circuits.
     If yes, then $\mathbf{r}_i$ and $\mathbf{r}_j$ span a $2$-dimensional cone in $\mathrm{Berg}(\mathrm{E}_7)$.
This process gives representatives for the $W(\mathrm{E}_7)$-orbits
of $2$-dimensional cones. The list of all cones is produced
   by applying the action of $W(\mathrm{E}_7)$ on the result. For each orbit,
we keep only the lexicographically smallest representative 
$(\mathbf{r}_i,\mathbf{r}_j)$.

Next, for each triple $i < j < k$ such that $(\mathbf{r}_i,\mathbf{r}_j)$
is a representative, 
  we check if $\mathbf{r}_i+\mathbf{r}_j+\mathbf{r}_k$ lies in $\mathrm{Berg}(\mathrm{E}_7)$. 
  If so, then $\{   \mathbf{r}_i, \mathbf{r}_j, \mathbf{r}_k \}$
     spans a $3$-dimensional cone in $\mathrm{Berg}(\mathrm{E}_7)$.
   The list of all $3$-dimensional cones can be found
   by applying the action of $W(\mathrm{E}_7)$ on the result. As before, we fix the 
   lexicographically smallest representatives. Repeating this process for dimensions
   $4,5$ and $6$,  we obtain the list of  all cones in $\mathrm{Berg}(\mathrm{E}_7)$,
   and hence the f-vector of this fan.

We now describe the procedure to derive $\mathrm{trop}(\mathcal{G}^0)$ by applying the top horizontal map $ \phi:\mathrm{Berg}(\mathrm{E}_7)\to{}\mathrm{trop}(\mathcal{G}^0)$.
Each ray $\mathbf{r}$ in $\mathrm{Berg}(\mathrm{E}_7)$ maps to either (a) $\mathbf{0}$, (b) a ray of $\mathrm{trop}(\mathcal{G}^0)$, or (c) a
positive linear combination of $2$ or $3$ rays, as listed in \cite[\S 6]{RSS}. For each ray in case (c), our program iterates through all pairs and triples of rays in $\mathrm{trop}(\mathcal{G}^0)$ and writes the image explicitly as a
positive linear combination of rays. With this data, we give a first guess of $\mathrm{trop}(\mathcal{G}^0)$ as follows: for each maximal cone
 $\sigma =\mathrm{span}(\mathbf{r}_{i_1},\dotsc{},\mathbf{r}_{i_6})$ of $\mathrm{Berg}(\mathrm{E}_7)$, we write
     $\phi(\mathbf{r}_{i_1}),\dotsc{},\phi(\mathbf{r}_{i_6})$ as linear combinations of the rays of $\mathrm{trop}(\mathcal{G}^0)$ and take $\sigma'\subset{}
     \T\PP^{134}     $ to be the cone spanned by all rays of $\mathrm{trop}(\mathcal{G}^0)$  that appear in the linear combinations. From this we get a list of 
     $6$-dimensional cones $\sigma'$. Let $\Phi{}\subset{}\T\PP^{134}$ be the union of these cones.
     
To certify that $\Phi = {\rm trop}(\mathcal{G}^0)$ we need to show (1) for each $\sigma\in{}\mathrm{Berg}(\mathrm{E}_7)$, we have $\phi{}(\sigma)\subset{}\sigma'$ for some cone $\sigma'\subset{}\Phi{}$; (2) each cone $\sigma'\subset{}\Phi{}$ is the union of some $\phi{}(\sigma)$ for $\sigma\in{}\mathrm{Berg}(\mathrm{E}_7)$; and (3) the intersection of any two cones $\sigma_1'$, $\sigma_2'$ in $\Phi{}$ is a face of both $\sigma_1'$ and $\sigma_2'$. The claim (1) follows from the procedure of constructing $\Phi{}$. For (2), one only needs to verify the cases where $\sigma'$ is one of the $9$ representatives by the action of $W(\mathrm{E}_7)$. For each of these, our program produces a list of $\phi{}(\sigma)$, and we check manually that $\sigma'$ is indeed the
union.
  For (3), one only needs to iterate through the cases where $\sigma_1'$ is a representative, and the procedure is straightforward. Therefore, our procedure shows that $\Phi$ is exactly $\mathrm{trop}(\mathcal{G}^0)$.
Then the $f$-vector is obtained from the list of all cones in  the fan $\Phi$. 

Finally, we recover the list of all cones in $\mathrm{Berg}(\mathrm{E}_6)$ and $\mathrm{trop}(\mathcal{Y}^0)$ by following the same procedure with the left vertical map and the bottom horizontal map.
\end{proof}

\begin{remark} \rm
The (reduced) Euler characteristic of the link of
 $\mathrm{Berg}(\mathrm{E}_7)$  is
the alternating sum of the entries of the f-vector
of  this Bergman fan.
We see from Lemma~\ref{lem:bignumbers} that this~is
\begin{align*}
1 - 6091 + 315399 - 3639804 +& 14982660 - 24607800 + 13721400 \\
&= \,765765 \,\,=\,\, 1 \cdot 5 \cdot 7 \cdot 9 \cdot 11 \cdot 13 \cdot 17.
\end{align*}
This is the product of all exponents of $W(\mathrm{E}_7)$, thus 
confirming the prediction in \cite[(9.2)]{RSSS}.
\end{remark}

The Naruki fan $\mathrm{trop}(\mathcal{Y}^0)$ is studied in \cite[\S 6]{RSS}. Under the action of $W(\mathrm{E}_6)$ through $\mathrm{Berg}(\mathrm{E}_6)$, it has two 
classes of rays, labelled type (a) and type (b). It also has two $W(\mathrm{E}_6)$-orbits of maximal cones: there are $135$ type (aaaa) cones, each spanned by four type (a) rays, and $1080$ type (aaab) cones, each spanned by three type (a) rays and one type (b) ray.

The   map $\mathrm{trop}(\mathcal{G}^0)\to{}\mathrm{trop}(\mathcal{Y}^0)$ 
tropicalizes the  morphism 
$\mathcal{G}^0 \to{}\mathcal{Y}^0$ between very affine $K$-varieties
of dimension $6$ and $4$. That morphism is the universal family 
of cubic surfaces.
 In order to tropicalize these surfaces, we examine the fibers of the 
 map $\mathrm{trop}(\mathcal{G}^0)\to{}\mathrm{trop}(\mathcal{Y}^0)$.
The next lemma concerns the subdivision of 
$\mathrm{trop}(\mathcal{Y}^0)$  induced by this map. By 
definition, this is the coarsest subdivision such that each cone in 
$\mathrm{trop}(\mathcal{G}^0)$ is sent to a union of cones.

\begin{lemma} \label{lem:barycentric}
The subdivision induced by the map $\mathrm{trop}(\mathcal{G}^0)\to{}\mathrm{trop}(\mathcal{Y}^0)$
  is the barycentric subdivision on type (aaaa) cones. For type (aaab) cones, each cone in the subdivision is a cone spanned by the type (b) ray and a cone in the barycentric subdivision of the  (aaa) face. 
Thus each (aaaa) cone is divided into $24$ 
cones, and each  (aaab) cone is divided into $6$ cones.
\end{lemma}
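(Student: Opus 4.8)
The plan is to verify the claim directly from the explicit fan data and the map $\phi \colon \mathrm{trop}(\mathcal{G}^0) \to \mathrm{trop}(\mathcal{Y}^0)$ produced in the proof of Lemma~\ref{lem:bignumbers}. Since $\phi$ is $W(\mathrm{E}_6)$-equivariant, the induced subdivision of $\mathrm{trop}(\mathcal{Y}^0)$ is invariant under $W(\mathrm{E}_6)$. Because the maximal cones fall into the two orbits (aaaa) and (aaab), it suffices to fix one representative maximal cone $\tau$ of each type and determine the induced subdivision on $\tau$ alone; the global statement then follows by transporting these two local pictures around their orbits.

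For a fixed maximal cone $\tau$, the induced subdivision on $\tau$ is the coarsest refinement in which every image $\phi(\sigma)$ with $\phi(\sigma)\subseteq\tau$ is a union of cells; concretely it is the common refinement of the images of the maximal cones $\sigma$ of $\mathrm{trop}(\mathcal{G}^0)$ lying over $\tau$. I would enumerate these $\sigma$ from the cone list of $\mathrm{trop}(\mathcal{G}^0)$, compute each image $\phi(\sigma)$ using the explicit action of $\phi$ on rays — the case analysis (a)/(b)/(c) recalled in the proof of Lemma~\ref{lem:bignumbers} — and then refine. For the (aaaa) representative $\tau = \mathrm{cone}(a_1, a_2, a_3, a_4)$, the assertion to confirm is that the rays of the refinement are exactly the fifteen partial sums $\sum_{i \in S} a_i$ over nonempty $S \subseteq \{1,2,3,4\}$, which are precisely the barycenters of the faces of $\tau$, and that its maximal cones are the $24 = 4!$ flags of faces, i.e.\ the barycentric subdivision. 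For the (aaab) representative $\tau = \mathrm{cone}(a_1, a_2, a_3, b)$ with $b$ the type (b) ray, the assertion is that only the three type (a) rays get barycentrically subdivided: the rays of the refinement lying in the (aaa) face are exactly the seven partial sums $\sum_{i \in S} a_i$ over nonempty $S \subseteq \{1,2,3\}$, the ray $b$ and its spans with the $a_i$ acquire no further subdivision, and each maximal cell is $\mathrm{cone}(\rho, b)$ for $\rho$ a maximal cell of the barycentric subdivision of the (aaa) face, giving $6 = 3!$ cones.

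The conceptual heart, and the main thing that must be checked rather than asserted, is the asymmetry between type (a) and type (b) rays: the barycenter rays $\sum_{i \in S} a_i$ must actually occur as images $\phi(\mathbf{r})$ of rays $\mathbf{r}$ of $\mathrm{trop}(\mathcal{G}^0)$ over $\tau$, while no analogous rays involving $b$ appear. This is exactly what distinguishes the two face-types of the Naruki fan, and it is read off from the explicit image formulas. The remaining, more routine, obstacle is to certify that the common refinement is neither coarser nor finer than the claimed barycentric subdivision: one checks that the enumerated images $\phi(\sigma)$ have their boundaries supported on the hyperplanes spanned by these partial sums, so the refinement is no coarser, and that no further walls are introduced, so it is no finer. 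Each of these is a finite check on the representative cone, carried out in {\tt sage}/{\tt java} as in Lemma~\ref{lem:bignumbers}, and the resulting counts $24$ and $6$ give an immediate consistency check against the f-vector computations.
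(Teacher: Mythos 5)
Your proposal takes essentially the same route as the paper: the authors define $\pi$ via the commutative diagram (checking well-definedness through $\mathrm{Berg}(\mathrm{E}_7)$), then for a representative cone of each $W(\mathrm{E}_6)$-orbit computationally examine all cones of $\mathrm{trop}(\mathcal{G}^0)$ mapping into it, their images revealing the subdivision. Your version merely spells out what that computation must confirm (the fifteen resp.\ seven partial-sum rays, the $24=4!$ and $6=3!$ flag cones, and the absence of walls through the type (b) ray), which the paper leaves implicit.
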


\begin{proof}
The map $\pi:\mathrm{trop}(\mathcal{G}^0)\to{}\mathrm{trop}(\mathcal{Y}^0)$ can be defined
via the commutative diagram~(\ref{equation:fan_commutative_diagram}): for $\mathbf{x}\in{}\mathrm{trop}(\mathcal{G}^0)$,
 take any point in its preimage in $\mathrm{Berg}(\mathrm{E}_7)$, then follow the left vertical map and the bottom horizontal map to get $\pi(\mathbf{x})$ in $\mathrm{trop}(\mathcal{Y}^0)$. It is well-defined
  because the kernel of the  map $\mathrm{Berg}(\mathrm{E}_7)\to{}\mathrm{trop}(\mathcal{G}^0)$ is contained in the kernel of the composition $\mathrm{Berg}(\mathrm{E}_7)\to{}\mathrm{Berg}(\mathrm{E}_6)\to{}\mathrm{trop}(\mathcal{Y}^0)$. 
 With this, we can compute the image in $\mathrm{trop}(\mathcal{Y}^0)$ of
 any cone in $\mathrm{trop}(\mathcal{G}^0)$. For each orbit of cones in $\mathrm{trop}(\mathcal{Y}^0)$, pick a representative $\sigma{}$, and examine
  all cones in $\mathrm{trop}(\mathcal{G}^0)$ that  map into $\sigma{}$.  Their
  images reveal the subdivision of $\sigma{}$.
\end{proof}

\begin{figure}[h]
\centering
\includegraphics[scale=0.93]{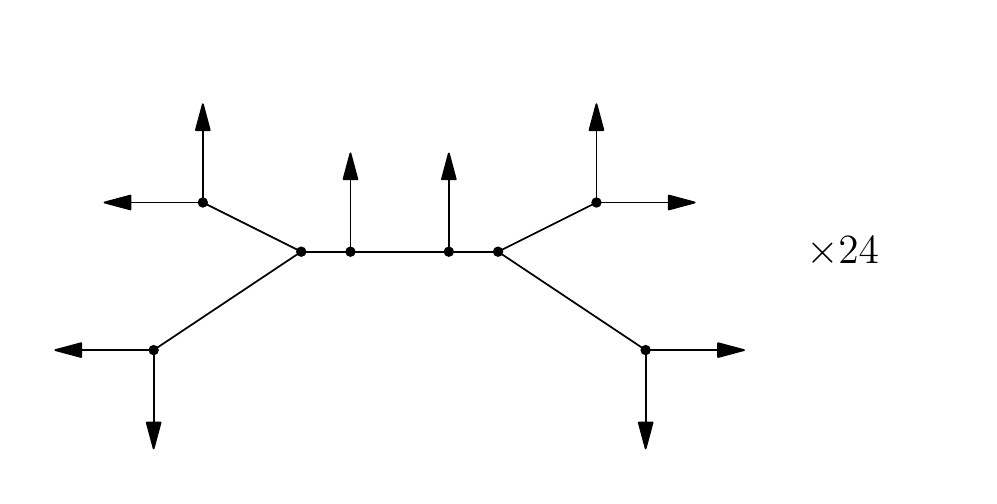}
\includegraphics[scale=0.9]{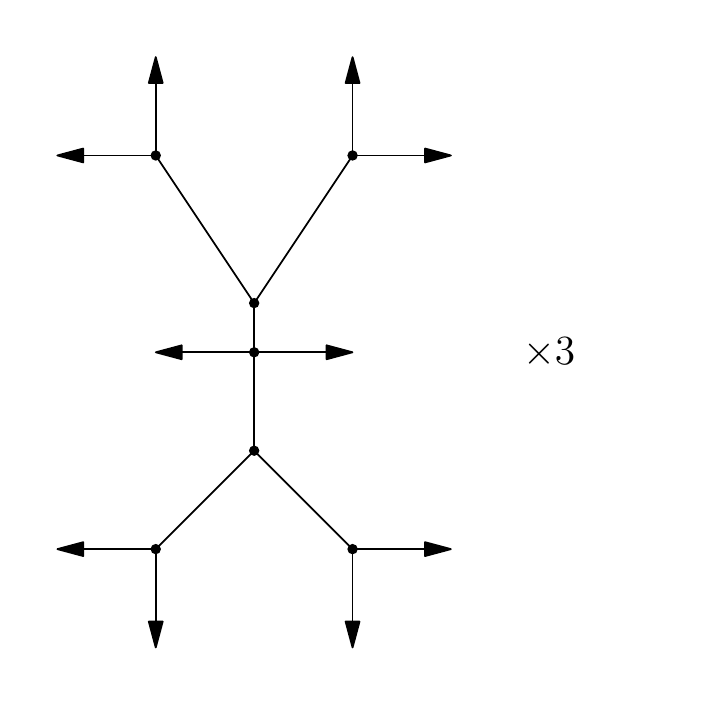}
\vspace{-0.2in}
\caption{The $27$ trees on tropical cubic surfaces of type (aaaa)
\label{fig:treeaaaa}}
\end{figure}

Lemma \ref{lem:barycentric} shows that each (aaaa) cone
of the Naruki fan ${\rm trop}(\mathcal{Y}^0)$ is divided into
$24$ subcones, and  each (aaab) cone is divided into $6$ subcones.
Thus, the total number of cones in the subdivision is $24\times{}135+6\times{}1080=9720$. 
For the base points in the interior of a cone, the fibers are contained in the same set of cones in $\mathrm{trop}(\mathcal{G}^0)$.
The fiber changes continuously as the base point changes.
Therefore, moving the base point around the interior of a cone simply 
changes the metric but not the combinatorial type of 
marked tropical cubic surface.

\begin{corollary}
The map $\,\mathrm{trop}(\mathcal{G}^0)\to{}\mathrm{trop}(\mathcal{Y}^0)$ has at most
two combinatorial types of generic fibers up to relabeling. 
\end{corollary}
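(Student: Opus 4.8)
The plan is to index the generic fibers by the maximal cones of the induced subdivision, reduce ``up to relabeling'' to the action of $W(\mathrm{E}_6)$, and then bound the number of orbits by two. By the paragraph preceding the corollary, as the base point ranges over the relative interior of a fixed maximal cone of the subdivision of $\mathrm{trop}(\mathcal{Y}^0)$ produced in Lemma~\ref{lem:barycentric}, only the edge lengths of the fiber vary, while its combinatorial type stays constant. Hence the combinatorial types of generic fibers are indexed by the maximal cones of this subdivision. Since $W(\mathrm{E}_6)$ acts on $\mathrm{trop}(\mathcal{Y}^0)$ realizing the relabelings of the $27$ lines, and the subdivision is canonically attached to the $W(\mathrm{E}_6)$-equivariant map $\mathrm{trop}(\mathcal{G}^0)\to{}\mathrm{trop}(\mathcal{Y}^0)$, this action preserves the subdivision and permutes its maximal cones. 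Therefore the number of combinatorial types of generic fibers up to relabeling is at most the number of $W(\mathrm{E}_6)$-orbits of maximal subcones, and it suffices to show that there are at most two such orbits.

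Next I would invoke the structure of the Naruki fan recalled above: it has exactly two $W(\mathrm{E}_6)$-orbits of maximal cones, the $135$ cones of type (aaaa) and the $1080$ cones of type (aaab). By Lemma~\ref{lem:barycentric}, each (aaaa) cone is barycentrically subdivided into $24$ subcones and each (aaab) cone into $6$ subcones. The corollary then reduces to the claim that the stabilizer in $W(\mathrm{E}_6)$ of a single Naruki maximal cone acts transitively on the subcones contained in it, for then the subcones form at most two orbits overall. For a type (aaaa) cone the stabilizer has order $|W(\mathrm{E}_6)|/135 = 384$, and the $24$ maximal cells of the barycentric subdivision correspond bijectively to the orderings of its four type~(a) rays; transitivity then amounts to the stabilizer surjecting onto the symmetric group $S_4$ permuting these rays. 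For a type (aaab) cone the stabilizer has order $|W(\mathrm{E}_6)|/1080 = 48$, and the $6$ subcones correspond to the orderings of the three type~(a) rays spanning the (aaa) face, so here transitivity amounts to a surjection onto $S_3$.

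The main obstacle is establishing these two transitivity statements. I would verify them in the explicit root-theoretic model of \cite[\S 6]{RSS}: after writing down the type~(a) rays of a chosen representative cone, one exhibits elements of $W(\mathrm{E}_6)$, built from the generators acting on $d_1,\ldots,d_6$, that fix the cone while transposing adjacent type~(a) rays, thereby generating the full $S_4$, respectively $S_3$. Alternatively, the required orbit information is already contained in the cone-by-cone enumeration carried out in the proof of Lemma~\ref{lem:bignumbers}: reading off how the computed $W(\mathrm{E}_6)$-orbit representatives act on the subdivision cells of Lemma~\ref{lem:barycentric} confirms transitivity directly. Granting transitivity, the maximal subcones fall into at most two $W(\mathrm{E}_6)$-orbits, whence there are at most two combinatorial types of generic fibers up to relabeling.
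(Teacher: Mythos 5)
Your proposal is correct and follows the same skeleton as the paper's proof: combinatorial types of generic fibers are constant on the relative interiors of the cones of the subdivision from Lemma~\ref{lem:barycentric}, the relevant group action is compatible with the commutative diagram (\ref{equation:fan_commutative_diagram}) so that fibers over conjugate points have the same type, and the statement reduces to showing that the $9720$ subdivision cones form at most two orbits. Two differences are worth noting. First, the paper works with $\mathrm{Stab}_{\mathrm{E}_6}(W(\mathrm{E}_7))$ rather than $W(\mathrm{E}_6)$; this stabilizer contains $W(\mathrm{E}_6)$ properly (the extra element realizes the diagram automorphism of $\mathrm{E}_6$), so your transitivity claims are formally stronger than what the paper certifies --- a priori the subcones within a Naruki cone could split into several $W(\mathrm{E}_6)$-orbits that are fused only by the outer involution, so your $S_4$ (resp.\ $S_3$) surjectivity statements for the cone stabilizers of order $384$ (resp.\ $48$) genuinely require the verification you defer to. Second, the paper certifies the two-orbit count by direct machine enumeration of all $9720$ cones, while you reduce it, via the known two orbits of maximal Naruki cones and the bijection between barycentric cells and orderings of the type~(a) rays, to a finite stabilizer computation; this is a cleaner and more structural reduction, at the cost of an unproved group-theoretic lemma whose epistemic status (checked by computation) ends up the same as the paper's. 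With either certification supplied, your argument is complete.
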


\begin{proof}
We  fixed an inclusion $\mathrm{E}_6\hookrightarrow{}\mathrm{E}_7$ in (\ref{eq:roots}).
   The action of $\mathrm{Stab}_{\mathrm{E}_6}(W(\mathrm{E}_7))$ on the fans is compatible with the entire commutative diagram (\ref{equation:fan_commutative_diagram}). Hence, the fibers over two points that are conjugate under this action have the same combinatorial type.
We verify that the $9720$ cones form exactly two orbits under this action. One orbit consists of the cones in the type (aaaa) cones, and the other consists of the cones in the type (aaab) cones. Therefore, there are at most two combinatorial types, one for each orbit.
\end{proof}

\begin{figure}[h]
\centering
\includegraphics[scale=0.93]{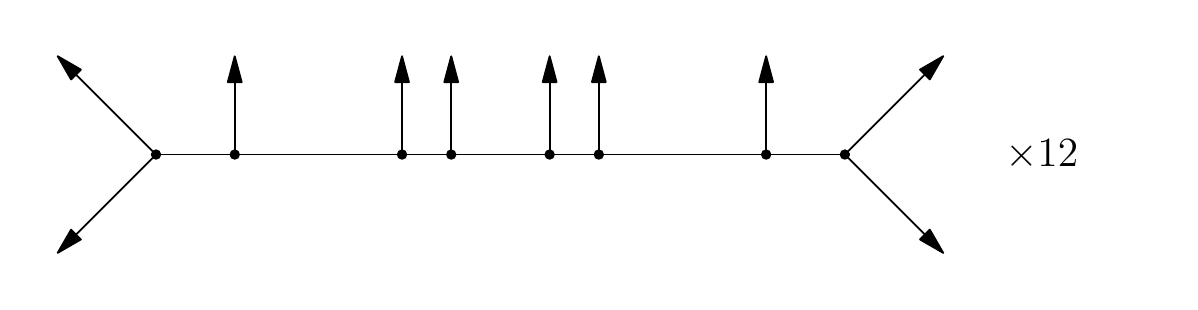}
\includegraphics[scale=0.88]{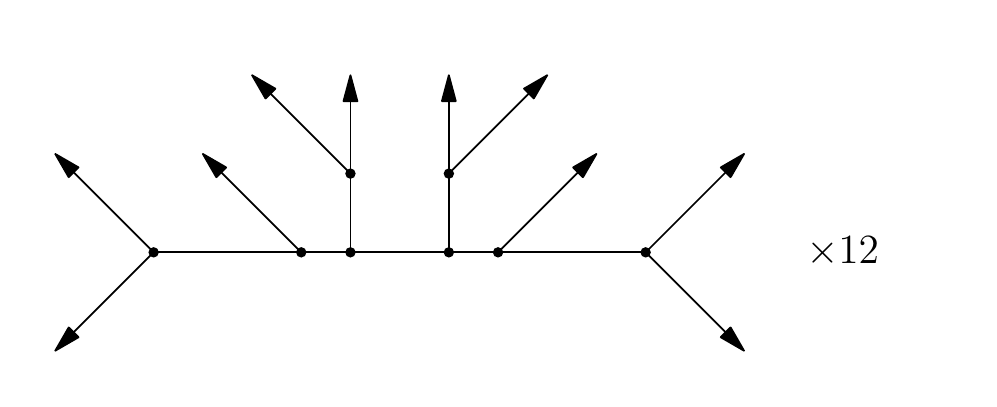}
\includegraphics[scale=0.8]{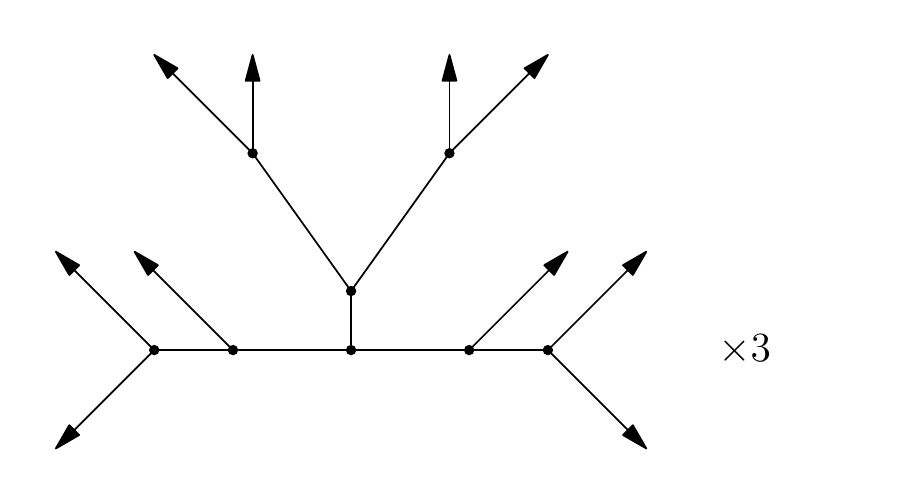}
\vspace{-0.2in}
\caption{The $27$ trees on tropical cubic surfaces of type (aaab)
\label{fig:treeaaab}}
\end{figure}

We can now derive our classification theorem for
tropical cubic surfaces.

\begin{proof}[Proof of Theorem \ref{thm:deg3}]
We compute the two types of fibers of 
$\,\pi:\mathrm{trop}(\mathcal{G}^0)\to{}\mathrm{trop}(\mathcal{Y}^0)$.
In what follows we explain this for a cone $\sigma$ of type (aaaa).
The computation for type (aaab) is similar.
Let $\mathbf{r}_1,\mathbf{r}_2,\mathbf{r}_3,\mathbf{r}_4$ denote the
rays that generate $\sigma$. We fix the vector
 $\mathbf{x} = \mathbf{r}_1+2\mathbf{r}_2+3\mathbf{r}_3+4\mathbf{r}_4$
 that    lies in the interior of a cone in the barycentric subdivision.  
   
The fiber $\pi^{-1}(\mathbf{x})$ is found by an explicit
computation. First we determine the directions of the rays.
 They arise from
rays of $\mathrm{trop}(\mathcal{G}^0)$ 
that are mapped to zero by $\pi$.
There are  $27$ such ray directions in
$\pi^{-1}(\mathbf{x})$. These are exactly the image of the $27$ type $\mathrm{A}_1$ rays in $\mathrm{Berg}(\mathrm{E}_7)$ that correspond to the roots in $\mathrm{E}_7\backslash{}\mathrm{E}_6$. We label them by $E_i,F_{ij},G_j$ as in (\ref{eq:dtoEFG}).
Next, we compute the vertices of $\pi^{-1}(\mathbf{x})$.
They are contained in $4$-dimensional cones 
$\sigma' = {\rm pos}\{\mathbf{R}_1,\mathbf{R}_2,\mathbf{R}_3,\mathbf{R}_4\}$
with $\mathbf{x} \in \pi(\sigma')$. The coordinates of each vertex
in  $\mathbb{TP}^{134}$ is computed by solving
 $y_1\pi{}(\mathbf{R}_1)+y_2\pi{}(\mathbf{R}_2)+y_3\pi{}(\mathbf{R}_3)+y_4\pi{}(\mathbf{R}_4) = \mathbf{x}$ for $y_1,y_2,y_3,y_4$. 
 
 The part of the fiber contained in each cone in $\mathrm{trop}(\mathcal{G}^0)$ is spanned by the vertices and the $E_i,F_{ij},G_j$ rays it contains. 
Iterating through the list of cones and looking at this data, we get a list that characterizes the polyhedral complex $\pi^{-1}( \mathbf{x})$.
 In particular, that list verifies that $\pi^{-1}( \mathbf{x})$ is $2$-dimensional and
 has the promised f-vector.
  For each of the $27$ ray directions $E_i,F_{ij}, G_j$, there is a tree  at infinity.
It is the link of the corresponding point at infinity 
$\pi^{-1}( \mathbf{x}) \subset \mathbb{TP}^{134}$.
The combinatorial types of these
$27$ trees  are shown in Figure \ref{fig:treeaaaa}.
 The metric on each tree can be computed as follows:
  the length of a bounded edge equals the lattice distance
between the two vertices in the corresponding flap.

The  surface $\pi^{-1}({\bf x})$ is
 homotopy equivalent to its bounded complex.
 We check directly that the bounded complex is contractible.
This can also be inferred from
Theorem~\ref{thm:modify}.
  \end{proof}

\begin{remark} \label{rem:paraxxx} \rm
We  may replace $\mathbf{x} = \mathbf{r}_1+2\mathbf{r}_2+3\mathbf{r}_3+4\mathbf{r}_4$ with a generic point $\mathbf{x} = x_1\mathbf{r}_1+x_2\mathbf{r}_2+x_3\mathbf{r}_3+x_4\mathbf{r}_4$, where $x_1{<}x_2{<}x_3{<}x_4$. This lies in the same cone in the barycentric subdivision, so 
the combinatorics of $\pi^{-1}( \mathbf{x})$
remains the same. Repeating the last step
 over the field $\mathbb{Q}(x_1,x_2,x_3,x_4)$ instead of $\mathbb{Q}$, we write the length of each 
 bounded edge in the $27$ trees in terms of the parameters. Each length
 either equals $x_1,x_2,x_3,x_4$ or is $x_i - x_j$ for some $i,j$.
\end{remark}

\section{Tropical modifications}\label{sec:mod}

In Section 2 we computed tropical varieties
from polynomial ideals, along the lines of the book 
by Maclagan and Sturmfels \cite{MacStu}. We now turn to 
tropical geometry as a self-contained subject in its own right.
This is the approach presented
in the book by Mikhalkin and Rau \cite{MikRau}.
Central to that approach is the notion of tropical modification.
In this section we explain how to construct our
tropical del Pezzo surfaces from the 
 plane $\R^2$
by  modifications. This leads to proofs of
Proposition \ref{prop:deg45} and Theorem \ref{thm:deg3}
purely within tropical geometry.

{\em Tropical modification} is an operation that relates topologically different tropical models of the same variety. This operation was first defined by Mikhalkin in \cite{MikICM};
see also \cite[Chapter~5]{MikRau}. Here we work with a variant
known as {\em open tropical modifications}. These were introduced
in the context of Bergman fans of matroids  in \cite{Sha}.
Brugall\'e and Lopez~de Medrano \cite{BL} used them
  to study intersections and inflection points of tropical plane curves.  
 
 We fix a tropical cycle $Y $ in $\R^n$, as in \cite{MikRau}.
 An {\em open modification} is a map $p: Y^{\prime} \rightarrow Y$ where $Y^{\prime} \subset \R^{n+1}$ is  a new tropical variety to be described below. One should think of $Y^{\prime}$ as being an embedding of the complement of a divisor in $Y$ into a higher-dimensional torus. 
  
Consider a piecewise integer affine function $g: Y \rightarrow \R$.  The graph
$$\Gamma_g(Y) \,\,= \,\,\bigl\{ 
(y, g(y)) \ | \ y \in Y \bigr\} \,\,\subset \,\,\R^{n+1}$$ is a polyhedral complex which inherits weights from $Y$. However, it usually not balanced.
There is a canonical way to turn  $\Gamma_g(Y)$
into a balanced complex. If $\Gamma_g(Y)$ is unbalanced
around a codimension one face $E$,  then we attach to it a new unbounded facet $F_E$ 
   in direction $-e_{n+1}$. (We here use the max convention, as in \cite{MikRau}).
         The facet $F_E$ can be equipped with a unique weight $w_{F_E} \in \Z$ such that the 
      complex obtained by adding $F_E$ is balanced at $E$. 
The resulting tropical  cycle is $Y^{\prime} \subset \R^{n+1}$.  By definition,
the {\em open  modification} of $Y$ given by $g$ is the map
$p: Y^{\prime} \rightarrow Y$, where $p$ comes from the projection $\R^{n+1} \rightarrow \R^n$ with kernel~$\R e_{n+1}$. 

The {\em tropical divisor} $\Div_Y(g)$ consists of all points $y \in Y$ such that $p^{-1}(y)$ is infinite. This is a polyhedral complex. It inherits weights on its top-dimensional faces from 
those of $Y^{\prime}$.   A tropical cycle is {\em effective} if the weights of its top-dimensional faces are positive.  Therefore, the cycle $Y^{\prime}$ is effective if and only if $Y$ and the divisor 
$\Div_Y(g)$ are effective.  Given a tropical variety $Y$ and an effective divisor
 $\Div_Y(g)$, we say the 
{\em tropical modification} $p: Y^{\prime} \rightarrow Y$ is {\em along}  $\Div_Y(g)$.
See \cite{MikICM, MikRau, Sha} for basics
concerning cycles, divisors and modifications. 

Open tropical modifications are related to re-embeddings of classical varieties 
as follows.
Fix a very affine $K$-variety $X \subset (\bar{K}^*)^n$
 and $Y = {\rm trop}(X) \subset \R^n$.
Given a polynomial function $f \in K[X]$, let
$D$ be its divisor in $X$. Then $X \backslash D$ is isomorphic
to the graph of the restriction of $f$ to $X \backslash D$.
In this manner, the function $f$ gives a closed embedding
 of $X \backslash D$ into $(\bar K^*)^{n+1}$.
 
For the next proposition we require the tropicalization of a variety to be {\em locally irreducible}. 
Let $y$ be a point in a  tropical variety $Y$, then  
$$\text{Star}_y(Y) \,\,\,=\,\,\, \{  y^{\prime} \ | \ \exists \   \epsilon > 0 \text{ s.t. } \forall  \ 0< \delta< \epsilon
\,:\,  y + \epsilon y^{\prime} \in Y  \},$$
 is a balanced tropical fan  with weights inherited from $Y$.  
 A tropical variety $Y$ is {\em locally irreducible}
  if at every point $y \in Y$, we have that $\text{Star}_y(Y)$ is 
 not a proper union of two tropical varieties,
 taking weights into consideration.


\begin{proposition}\label{prop:mod}
Let $X \subset (\bar{K}^*)^n$ be a very affine variety.
For a function $ f \in K[X]$, let $D$ be the divisor ${\rm div}_X(f)$,
and let $X' = X\backslash D \subset (\bar K^*)^{n+1}$ denote the graph of $X$ along $f$ as described above. 
Let $Y = {\rm trop}(X) \subset \R^n$ and
$  Y' = {\rm trop}(X') \subset \R^{n+1}$. Suppose that $Y$ is locally irreducible, then 
there exists a piecewise integer affine function  $g:Y \rightarrow \R$
such that ${\rm div}_Y(g)= {\rm trop}(D)$ and
the coordinate projection $Y' \rightarrow Y$ is
the open modification of $Y$ along that divisor. \end{proposition}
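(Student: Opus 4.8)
The plan is to produce the function $g$ explicitly from the valuation of $f$, then verify that the graph construction on the tropical side matches the re-embedding on the classical side. First I would set $g = \trp(f)$, meaning the tropicalization of the polynomial $f$ viewed as a piecewise integer affine function on $Y = \trp(X)$. Concretely, if $f = \sum_{u} c_u x^u$ then $g(y) = \max_u \bigl( \val(c_u) + \langle u, y\rangle \bigr)$ in the max convention used in the excerpt. This is piecewise integer affine by construction, since each piece is an integer-affine function $y \mapsto \val(c_u) + \langle u, y\rangle$ and $g$ is their maximum. The restriction to each cell of $Y$ on which a single monomial achieves the maximum is genuinely affine with integer slope $u$, which is what we need.

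The next step is to identify $\trp(X')$ with the balanced graph $\Gamma_g(Y)$ described before the statement. Since $X' \subset (\bar K^*)^{n+1}$ is the graph of $f$ over $X \setminus D$, its $(n{+}1)$st coordinate is exactly $f$ evaluated on points of $X$. Tropicalizing commutes with taking graphs away from the divisor: for a point $x \in X$ with $\val(f(x))$ finite, $\trp$ sends $(x, f(x))$ to $(\trp(x), \val(f(x)))$, and over a generic point of a maximal cell the valuation of $f$ equals $g(\trp(x))$ by the usual statement that the tropical polynomial computes the valuation of $f$ outside a measure-zero set. Hence the closure of $\trp(X')$ contains the graph $\Gamma_g(Y)$. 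I would argue the reverse inclusion using that $\trp(X')$ is a balanced cycle of the correct dimension $\dim Y$, and that the only way to balance $\Gamma_g(Y)$ is by adding the downward facets $F_E$ in direction $-e_{n+1}$ prescribed in the modification construction. Here is where local irreducibility of $Y$ enters: it guarantees that $\Star_y(Y)$ is not a proper union of subvarieties, so the balancing weight $w_{F_E}$ is uniquely determined and the balanced completion of $\Gamma_g(Y)$ is forced to equal the open modification $Y'$, with no spurious extra components. This pins down $\trp(X') = Y'$ and simultaneously identifies the coordinate projection $Y' \to Y$ with the open modification map $p$.

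The final step is to check the divisor identity $\Div_Y(g) = \trp(D)$. By definition $\Div_Y(g)$ is the set of points where the fiber of $p$ is infinite, which is precisely the codimension-one locus where the extra downward rays are attached, i.e.\ the non-linearity locus of $g$ together with its attached weights. On the classical side, $D = \Div_X(f)$, and its tropicalization $\trp(D)$ is the codimension-one skeleton recording where $f$ vanishes. The content is that the tropical vanishing locus of $f$ on $Y$ — the corner locus of the piecewise linear function $g$, weighted by lattice-width differences — coincides with $\trp(\Div_X(f))$. This is the standard tropicalization-of-divisors statement (as in \cite{MacStu}), and I would invoke it to conclude that the weighted corner locus of $g$ is exactly $\trp(D)$, so that $\Div_Y(g) = \trp(D)$ with matching weights.

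I expect the main obstacle to be the reverse inclusion $\trp(X') \subseteq Y'$ and the verification that the balancing weights $w_{F_E}$ agree with the multiplicities of $\trp(D)$. The inclusion $\Gamma_g(Y) \subseteq \trp(X')$ is essentially formal, but showing there is nothing more requires controlling the behavior of $\trp$ along the divisor $D$, where $f$ vanishes and the naive graph degenerates; this is exactly the place where local irreducibility is indispensable, since without it the star of $Y$ could split and the balancing (hence the added facet weights) would not be unique, breaking the identification with the canonical open modification. Handling this carefully — matching the combinatorial balancing constant at each codimension-one face $E$ with the order of vanishing of $f$ encoded in $\trp(D)$ — is the crux of the argument; the rest is bookkeeping about piecewise-linear functions and the max convention.
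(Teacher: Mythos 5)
Your construction breaks at the very first step: taking $g = \trp(f)$, the tropical polynomial of $f$ restricted to $Y$, is not correct, and this is precisely the pitfall the paper flags right after the corollary to this proposition. The corner locus of $\trp(f)|_Y$ computes the \emph{stable} intersection of $Y$ with the tropical hypersurface of $f$, which in general differs from $\trp(D)$ for $D = {\rm div}_X(f)$; the paper cites Examples 4.2 and 4.3 of \cite{BL} for exactly this phenomenon. Concretely, over $K = \C\{\!\{t\}\!\}$ take $X = V(x+y+1) \subset (\bar K^*)^2$ and $f = x + (1+t)y + (1+t^2)$. All coefficients have valuation $0$, so $\trp(f)(X,Y) = \min(X,Y,0)$ and its corner locus on $Y$ is the vertex of the tropical line; but on $X$ one has $f = ty + t^2$, so $D$ is the point with $(\val\, x, \val\, y) = (0,1)$ and $\trp(D)$ lies in the interior of an edge. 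Your claim that $\Gamma_g(Y) \subseteq \trp(X')$ because ``the tropical polynomial computes $\val(f(x))$ outside a measure-zero set'' also fails: the locus where the extremum in $\trp(f)$ is attained twice is measure zero in $\R^n$, but $Y$ itself can sit inside that locus in a full-dimensional way, and the equations of $X$ can force cancellation at \emph{every} point over an entire facet of $Y$. In the example, along the whole ray of $Y$ with $(\val\, x,\val\, y)=(0,s)$, $s \geq 0$, the initial form of $f$ vanishes identically on the initial degeneration of $X$, and $\val(f(x)) = \min(1+s,\,2) \neq 0 = \trp(f)$ there; the true graph function breaks exactly at $\trp(D)$ and cannot be recovered from $\trp(f)$ alone. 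For the same reason, your final appeal to a ``standard tropicalization-of-divisors statement'' to get ${\rm div}_Y(g) = \trp(D)$ is invoking precisely the false commutation of tropicalization with intersections.

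This is why the paper's proof runs in the opposite direction. It takes $Y' = \trp(X')$ as given and analyzes the fibers of the coordinate projection $p : Y' \to Y$ directly: each fiber is closed and at most one-dimensional; a fiber with two endpoints $y_1, y_2$ would split $\text{Star}_y(Y)$ into the projections of $\text{Star}_{y_1}(Y')$ and $\text{Star}_{y_2}(Y')$, contradicting local irreducibility; and since $f$ is a regular function, fibers cannot be unbounded in the $+e_{n+1}$ direction. Hence each fiber is a single point or a downward half-line, and $g$ is then \emph{defined} by $g(y) = p^{-1}(y)$ on $Y \setminus \trp(D)$ and extended by continuity. Note also that local irreducibility is not needed where you place it: the balancing weights $w_{F_E}$ on the added facets are always uniquely determined by the balancing condition. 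Its actual role is to force $\trp(X')$ to be single-valued over $Y$ — a graph plus downward rays rather than a multivalued section — which is exactly what fails for reducible $Y$ in the examples of \cite{BL}.
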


\begin{proof}
The coordinate projection $p: \R^{n+1} \rightarrow \R^n$ takes $Y'$ onto $Y$, since
tropicalization acts coordinate-wise. 
We claim that the fiber over a 
point $y \in Y$ is either a single  point or a half-line in the $-e_{n+1}$ direction. The fiber, $p^{-1}(y)$ 
is $1$-dimensional   and closed  in $Y^{\prime}$.
 Let $y^{\prime}$ be an endpoint  of a connected component of $p^{-1}(y)$.
 Then  $p(\text{Star}_{y^{\prime}}(Y^{\prime}))$ has the same dimension 
 as $Y$. Since otherwise, $\text{Star}_{y^{\prime}}(Y^{\prime})$ contains a space of linearity in the direction $e_{n+1}$ and $y^{\prime}$ cannot be an endpoint of the fiber. 
If the one dimensional fiber $p^{-1}(y)$ contains two endpoints $y_1$ and $y_2$ then $Y$ must be reducible at $y$; it can be split into more than one component coming from 
the projection of  $p(\text{Star}_{y_1}(Y^{\prime}))$ and  $p(\text{Star}_{y_2}(Y^{\prime}))$. Therefore, $p^{-1}(y)$ consists of either a single point, a line, or a half line. 
However, since $f \in K[X]$ is a regular function, the fiber of a point $y \in Y$ cannot be unbounded in the $+e_{n+1}$ direction. Thus the only possibilities are that $p^{-1}(y)$ is a single point or a half line in the $-e_{n+1}$ direction.

Finally, we obtain the piecewise integer affine function $g$ by taking  $g(y) = p^{-1}(y)$ for $y \in Y \backslash {\rm trop}(D)$ and then extending by continuity to the rest of $Y$. Then $Y^{\prime}$ is the modification along the function $g$  described above. 
\end{proof}

Any two tropical rational functions $g$ and $g^{\prime}$ 
that define the same tropical divisor on $Y$ must differ by a map which is  integer affine on $Y$,
 see \cite[Remark 3.6]{AllRau}.
 This leads to the following corollary.  
 
\begin{corollary} Under the assumptions of Proposition \ref{prop:mod},
 the tropicalization of $X^{\prime} = X \backslash D \subset \R^{n+1}$ is determined uniquely by those
of $D$ and $X$, up to an integer affine map.
\end{corollary}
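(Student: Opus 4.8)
The plan is to derive the corollary directly from Proposition~\ref{prop:mod} together with the cited uniqueness result \cite[Remark 3.6]{AllRau}. The key observation is that Proposition~\ref{prop:mod} already identifies the projection $Y' \to Y$ as the open modification of $Y$ along the divisor ${\rm div}_Y(g) = {\rm trop}(D)$, where $g$ is a piecewise integer affine function on $Y$. So the content of the corollary is really a \emph{well-definedness} statement: the output $Y' = {\rm trop}(X')$ depends only on $Y = {\rm trop}(X)$ and on the tropical divisor ${\rm trop}(D)$, and not on the auxiliary choice of $g$.

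First I would observe that the construction of the open modification described before Proposition~\ref{prop:mod} takes as input a tropical cycle $Y$ and a piecewise integer affine function $g : Y \to \R$, and produces $Y'$ by graphing $g$ and attaching the balancing facets $F_E$ in the $-e_{n+1}$ direction along the unbalanced codimension-one faces. The resulting $Y'$ is manifestly a function of the pair $(Y, g)$ alone. Hence, to prove the corollary, it suffices to show that if $g$ and $g'$ are two piecewise integer affine functions on $Y$ with ${\rm div}_Y(g) = {\rm div}_Y(g') = {\rm trop}(D)$, then the two resulting modifications $Y'$ and $Y''$ differ only by an integer affine transformation of $\R^{n+1}$.

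The core of the argument is the cited fact \cite[Remark 3.6]{AllRau}: two tropical rational functions defining the same tropical divisor on $Y$ differ by a map $h : Y \to \R$ that is integer affine on $Y$, i.e.\ $g' = g + h$ with $h$ integer affine. Given this, I would show that adding such an $h$ to the graphing function corresponds precisely to applying the integer affine shear $(y, s) \mapsto (y, s + h(y))$ of $\R^{n+1}$. This shear is an integer affine automorphism of $\R^{n+1}$; it carries the graph $\Gamma_g(Y)$ to $\Gamma_{g'}(Y)$, and since it preserves the $-e_{n+1}$ direction up to the integer affine correction it likewise carries each attached facet $F_E$ to the corresponding facet for $g'$ with the same weight. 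Therefore it maps $Y'$ isomorphically onto $Y''$ as weighted balanced complexes, which is exactly the assertion that the tropicalization of $X'$ is determined up to an integer affine map.

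The main obstacle I anticipate is verifying that the balancing facets and their weights $w_{F_E}$ are genuinely preserved under the shear, rather than merely the graph itself. One must check that the integer affine correction $h$ does not alter which codimension-one faces are unbalanced or the integer weights needed to restore balance; this amounts to noting that the balancing condition is computed from primitive edge directions and weights, which transform compatibly under an integer affine automorphism, so the divisor data ${\rm div}_Y(g) = {\rm div}_Y(g')$ being equal forces the attached facets to match. Once this compatibility is confirmed, the corollary follows immediately, since $Y$ and ${\rm trop}(D)$ determine $g$ up to exactly the integer affine ambiguity that the shear absorbs.
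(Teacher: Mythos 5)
Your proposal is correct and follows the same route as the paper, whose entire ``proof'' is the sentence preceding the corollary: Proposition~\ref{prop:mod} exhibits ${\rm trop}(X')$ as the open modification of ${\rm trop}(X)$ along a function $g$ with ${\rm div}_Y(g) = {\rm trop}(D)$, and \cite[Remark 3.6]{AllRau} says any two such functions differ by an integer affine map. Your explicit verification that the shear $(y,s) \mapsto (y, s+h(y))$ carries the graph and the balancing facets (with their weights) of one modification to those of the other is exactly the detail the paper leaves implicit, and it is carried out correctly.
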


In general, ${\rm trop}(X^{\prime})$ is not determined by the tropical hypersurface 
of $f \in K[X]$, as the tropicalization of the divisor $D = {\rm div}_X(f)$ may differ from the tropical 
stable intersection of ${\rm trop}(X^{\prime})$ and that tropical hypersurface. 
 Examples 4.2 and 4.3 of \cite{BL} demonstrate both this and that
 Proposition   \ref{prop:mod} can fail without   the locally  irreducibility hypothesis. 

Suppose now that $X^{\prime} \subset (\bar K^*)^{n+k}$ is obtained from 
$X \subset  (\bar K^*)^{n}$ by taking the graph of
a list of $k \geq 2$ polynomials $f_1,f_2, \ldots, f_k$.
This gives us a  sequence of projections
\begin{equation}
\label{eq:projsequence}
 X' = X_k \rightarrow X_{k-1} \rightarrow \cdots \rightarrow X_2 \rightarrow X_1 \rightarrow
X_0 = X , 
\end{equation}
where $X_i \subset (\bar K^*)^{n+i}$ is obtained
from $X$ by taking the graph of $(f_1,\ldots,f_i)$.
We further get a corresponding sequence of projections of the tropicalizations:
\begin{equation}
\label{eq:modisequence}
 {\rm trop}(X^{\prime}) = {\rm trop}(X_k) \rightarrow {\rm trop}(X_{k-1}) \rightarrow \cdots  \rightarrow
{\rm trop}(X_0) = {\rm trop}(X).
\end{equation}
We may ask if it is possible to recover  ${\rm trop}(X^{\prime}) \subset \R^{n+k}$ just from
  ${\rm trop}(X)$ and the  $k$ tropical divisors  ${\rm trop}(D_i)$ considered in ${\rm trop}(X)$. 
The answer is ``yes'' in the special case when
  ${\rm trop}(X) = \R^n$ and the arrangement of divisors $ {\rm trop}(D_i)$ are each locally irreducible and intersect properly, meaning the intersection of any $l$ of these divisors has codimension $l$.  
However, in general,  iterating modifications to recover ${\rm trop}(X^{\prime})$ is a delicate procedure.
In most cases, the outcome is not solely determined by the configuration of tropical divisors in ${\rm trop}(X)$, even if the divisors intersect pairwise properly.
We illustrate this by deriving the degree $5$ del Pezzo surface~${\rm trop}(M_{0,5})$.

\begin{figure}
\begin{center}
\includegraphics[scale=2.5]{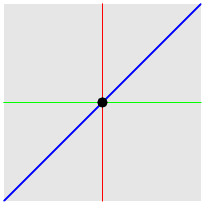}
\hspace{1cm}
\includegraphics[scale=1.5]{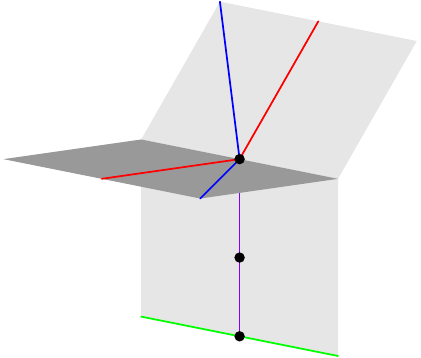}
\end{center}
\vspace{-0.2in}
\caption{The tropical divisors in Example \ref{ex:modifytopetersen}. The positions of ${\rm trop}(G_1 \cap H_1)$ in ${\rm trop}(X_1)$ for three choices of $a$  are marked on the
downward purple edge.
For $a  = 1$ we get $M_{0,5}$.
\label{fig:M05mod1}}
\end{figure}

\begin{example} \label{ex:modifytopetersen} \rm
This is a variation on \cite[Example 2.29]{Sha}. 
Let $X = (\bar{K}^*)^2$ and consider the functions
$f(x) = x_1 - 1$, $g(x) = x_2 - 1$ and $h(x) = ax_1 - x_2$, 
for some constant $a \in K^*$ with ${\rm val}(a) = 0$.
Denote ${\rm div}_X(f)$ by $F$, and analogously for $G$ and $H$. 
The tropicalization of each divisor is a line through the origin in $\R^2$. 
The directions of ${\rm trop}( F), {\rm trop}(G)$, and ${\rm trop}(H)$ are $(1, 0), (0, 1),$ and $(1, 1)$ respectively.
Let $X^{\prime} \subset  (\bar{K}^*)^5$ denote the graph of $X$ along the three functions $f, g,$ and  $h$, in that order.  
This defines a sequence of projections, $$X^{\prime} \longrightarrow X_2 \longrightarrow X_1 \longrightarrow X = (\bar{K}^*)^2.$$
Here,
$\,X_2 = \{ (x_1, x_2, x_1-1, x_2-1) \} \subset (\bar{K}^*)^4$.
The tropical plane $\mathrm{trop}(X_2)$ contains the face
$\sigma= \{0\}\times{}\{0\}\times{}(-\infty{},0]\times{}(-\infty{},0]$,
corresponding to points with
$\mathrm{val}(x_1)=\mathrm{val}(x_2)=0$.
Let  $H_2 $ denote the graph of $f$ and $g$ restricted to $H$. This is a line in 
$4$-space, namely,
$$H_2 \,=\, \{ (x_1, ax_1, x_1 - 1, ax_1 -1)  \}\, \subset \,X_2 \,\subset\, (\bar{K}^*)^4.$$
The tropical line $\mathrm{trop}(H_2)$ depends on the valuation
of $a-1$. It can be determined from 
$${\rm trop}(G_1 \cap H_1) \,=\,
\bigl\{ \bigl(0,0,-\mathrm{val}(\frac{1}{a}-1)\bigr)\bigr\}.$$ 
Here, $H_1, G_1$ denote the graph of $f$ restricted to $H$ and $G$, respectively.
 Figure \ref{fig:M05mod1}  shows the possibilities 
for ${\rm trop}(G_1 \cap H_1)$ in ${\rm trop}(X_1)$, and Figure \ref{fig:M05mod2}  shows
$\mathrm{trop}(H_2)\cap  \sigma$ in $ {\rm trop}(X_2)$.

\begin{figure}
\begin{center}
\includegraphics[scale=1.7]{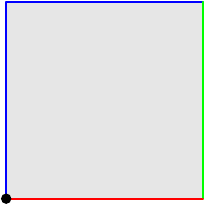}
\put(-70, -15){$v = 0$}
\hspace{1cm}
\includegraphics[scale=1.7]{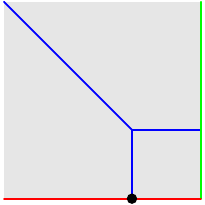}
\put(-83, -15){$v \in (0,\infty)$}
\hspace{1cm}
\includegraphics[scale=1.7]{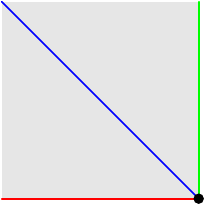}
 \put(-70, -14){$v = \infty $}
 \end{center}
 \vspace{-0.2in}
\caption{The different possibilities for ${\rm trop}(H_2) \cap \sigma$
 in Example \ref{ex:modifytopetersen}
 \label{fig:M05mod2} }
\end{figure}

We can prescribe any value
$v \in ( 0,\infty)$ for the
valuation of $\frac{1}{a} -1 $, for instance by taking
 $\frac{1}{a} = 1 + t^v$ when $K = \C \{\!\{t\}\!\}$. In these cases,
 the tropical plane ${\rm trop}(X')$ is not a fan.
However, it becomes a fan when $v$ moves to either
endpoint of the interval $[0,-\infty]$. For instance, $v = 0$ happens when
 the constant term of $\frac{1}{a}$ is not equal to $1$ and ${\rm trop}(X')$ is the fan  obtained from $\R^2$ by
 carrying out the modifications along the \emph{pull-backs}   of the    tropical divisors
   on the left in Figure \ref{fig:M05mod1}. See  Definition 2.16 of \cite{Sha} for pull-backs of tropical divisors. 
The other extreme is when $a=1$. Here, 
$F, G,H$ are concurrent lines in $(\bar K^*)^2$,
and   ${\rm trop}(H_2)$ contains a ray in the direction $e_3+e_4$. 
 Upon modification, we obtain the fan over the Petersen graph in Figure \ref{fig:petersen}. This is the tropicalization of the 
degree $5$ del Pezzo surface
 in (\ref{embeddeg5}).
 Thus beginning from the tropical divisors ${\rm trop}( F), {\rm trop}(G)$, and ${\rm trop}(H)$ in 
$\R^2$,  we recover  ${\rm trop}(M_{0,5})$ if we know that they represent tropicalizations of concurrent lines in $(\bar{K}^*)^2$. 

The open tropical modification described above represents the tropicalization of the very affine variety $M_{0, 5}$. 
In this case the  compactification of $M_{0,5}$ which produces the del Pezzo surface of degree $5$ is indeed the tropical compactification 
given by the fan ${\rm trop}(M_{0,5})$, (see \cite[\S 6.4]{MacStu} for an introduction to tropical compactifications). 
There is no direct connection
between open tropical modifications and birational transformations, the link  depends a choice of compactification of the very affine variety. 
Upon removing divisors one can find  more interesting compactifications of the complement. 
For example, $(K^*)^2$ cannot be compactified to a del Pezzo surface of degree less than $6$, but upon deleting the three divisors above one can compactify the complement 
to  a del  Pezzo surface of degree $5$. 
 \hfill $\diamondsuit$
\end{example}


We now explain how this extends to a del Pezzo surface $X$
 of degree $d \leq 4$.
As before, we write $X^0$ for the complement of
the $(-1)$-curves in $X$. Then  $X' = X^0$ is obtained from 
$(\bar{K}^*)^2$ by taking the
graphs of the polynomials  $f_1,\ldots, f_k$ of the curves in $(\bar K^*)^2$
 that give rise to  $(-1)$-curves on $X$. 
More precisely, fix $p_1 = (1:0:0)$, $p_2= (0:1:0)$ , $p_3 = (0:0:1)$,
$p_4 = (1:1:1)$, and take $p_5,\ldots,p_{9-d}$ 
to be general points in $\PP^2$.
If $d = 4$ then there is only one extra point $p_5$, we have $k = 8$ in
(\ref{eq:projsequence}), and $f_1,\ldots,f_8$ are the  polynomials  defining
\begin{equation}
\label{eq:8fordeg4}
 F_{14}, F_{15}, F_{24},\, F_{25}, F_{34}, F_{35}, \, F_{45}, \,G. 
 \end{equation}
For $d=3$, there are two extra points $p_5,p_6$ in $X$, we have $k = 18$,
and $f_1,\ldots,f_{18}$ represent
\begin{equation}
\label{eq:8fordeg3}
 F_{14}, F_{15}, F_{16},\,
  F_{24}, F_{25}, F_{26},\,
   F_{34}, F_{35},F_{36},\,
   F_{45}, F_{46},F_{56}, \,
   G_1,G_2,G_3,G_4,G_5,G_6.
 \end{equation}

\begin{figure}
\begin{center}
\includegraphics[scale=0.9]{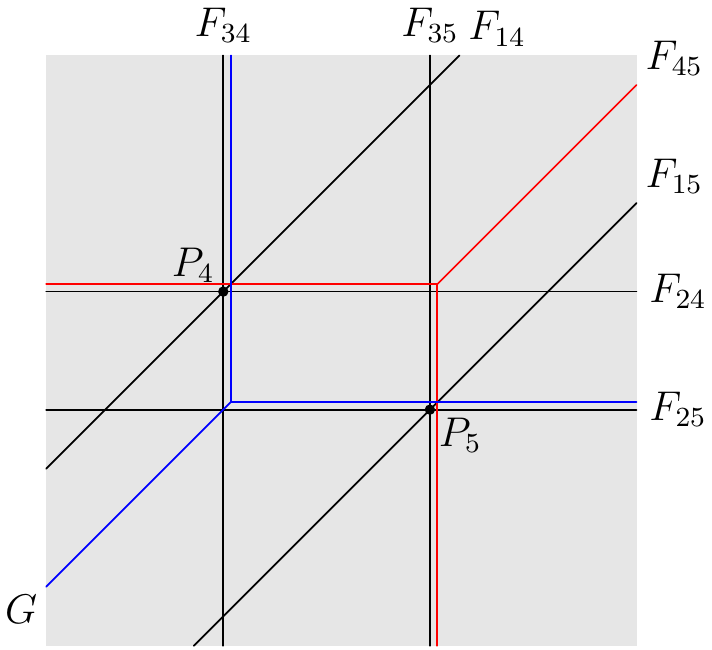}
\hspace{2cm}
\includegraphics[scale=1]{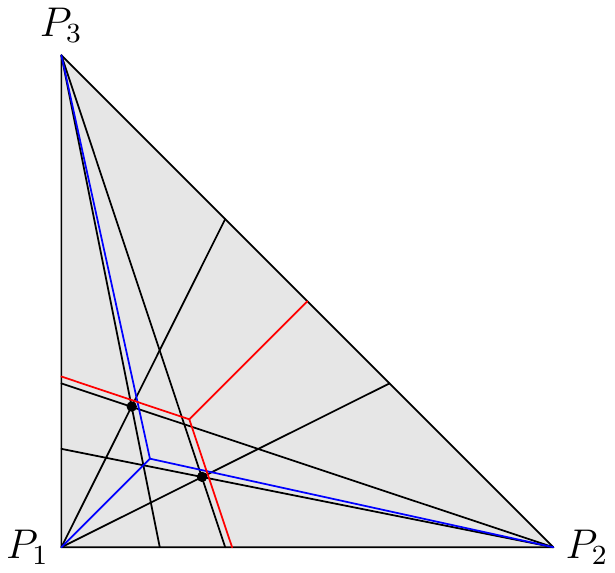}
\end{center}
\vspace{-0.3in}
\caption{The tropical conic and the tropical lines determined by the $5$ points  for a
marked del Pezzo surface of  degree $4$. The diagram is
drawn in $\R^2$ on the left and in $\TP^2$ on the right. 
The $16$ trivalent trees corresponding to the $(-1)$-curves of the del Pezzo surface,
seen at the nodes in in Figure~\ref{fig:clebsch}, arise
from the plane curves shown here by tropical modifications.
\label{fig:curveconfig}
}
\end{figure}

We write $P_i = {\rm trop}(p_i) \in \TP^2$ for the image
of the point $p_i$ under tropicalization.
The tropical points $P_1,P_2,\ldots$ are in {\em general position}
if any two lie in a unique tropical line, these lines are distinct,
any five lie in a unique tropical conic, and these conics are distinct in $\TP^2$.
A configuration in general position for $d=4$ is shown in
Figure \ref{fig:curveconfig}. Our next result
implies that the colored Clebsch graph in  Figure \ref{fig:clebsch}  can be read off from
Figure \ref{fig:curveconfig} alone.
For $d=3$, in order to recover the tropical cubic surface
from the planar configuration, the points $P_i$
must satisfy further genericity assumptions, to be
 revealed in the proof of the next theorem. 
 
\begin{theorem} \label{thm:modify}
Fix $d \in \{3,4,5\}$ and
 points $p_1, \ldots,p_{9-d}$ in $\PP^2$ whose
 tropicalizations $P_i$ are sufficiently generic in  $\TP^2$.
The  tropical  del Pezzo surface ${\rm trop}(X^0)$ can be
constructed from $\TP^2$ by
a sequence of open modifications  that is determined by
the points $P_1, \ldots, P_{9-d}$.
\end{theorem}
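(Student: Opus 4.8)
The plan is to realize ${\rm trop}(X^0)$ as the top of the tower of open modifications in (\ref{eq:modisequence}) and to argue that every stage of this tower is pinned down by the tropical points $P_1,\dots,P_{9-d}$. By the construction preceding the theorem, $X^0$ is the graph of $(\bar{K}^*)^2$ along the defining polynomials $f_1,\dots,f_k$ of the $(-1)$-curves: the three lines through $p_4$ for $d=5$, the eight curves in (\ref{eq:8fordeg4}) for $d=4$, and the eighteen curves in (\ref{eq:8fordeg3}) for $d=3$. This yields the sequence of graphs (\ref{eq:projsequence}) and, applying ${\rm trop}$ coordinatewise, the sequence of coordinate projections (\ref{eq:modisequence}) with ${\rm trop}(X_0)=\R^2=\TP^2$ at the bottom and ${\rm trop}(X^0)={\rm trop}(X_k)$ at the top. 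The two things that need proof are that each projection is a genuine open modification to which Proposition~\ref{prop:mod} applies, and that the divisor modified at each stage depends only on $P_1,\dots,P_{9-d}$.

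First I would set up an induction on $i$ with the hypothesis that ${\rm trop}(X_i)$ is locally irreducible and is determined, up to an integer affine map, by $P_1,\dots,P_{9-d}$. The base case ${\rm trop}(X_0)=\R^2$ is immediate. For the inductive step, each $X_i$ is a smooth rational surface, and I would check that an open modification of a locally irreducible tropical surface along the connected effective divisor ${\rm trop}(D_{i+1})$ is again locally irreducible. With this in hand, Proposition~\ref{prop:mod} produces a piecewise integer affine function $g_{i+1}$ with ${\rm div}_{{\rm trop}(X_i)}(g_{i+1})={\rm trop}(D_{i+1})$ and identifies the projection ${\rm trop}(X_{i+1})\to{\rm trop}(X_i)$ with the open modification along that divisor. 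The corollary following Proposition~\ref{prop:mod} then shows that ${\rm trop}(X_{i+1})$ is determined, up to an integer affine map, once ${\rm trop}(X_i)$ and the divisor ${\rm trop}(D_{i+1})$ are known.

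The heart of the argument is the claim that each divisor ${\rm trop}(D_{i+1})\subset{\rm trop}(X_i)$ is determined by $P_1,\dots,P_{9-d}$. The divisor $D_{i+1}={\rm div}_{X_i}(f_{i+1})$ is the strict transform in $X_i$ of the line or conic $V(f_{i+1})\subset(\bar{K}^*)^2$. Its tropicalization lies over the tropical plane curve ${\rm trop}(V(f_{i+1}))\subset\R^2$: away from the earlier divisors it is the graph of the preceding modification functions, while over each point where $V(f_{i+1})$ meets a previously modified curve it climbs onto the corresponding flaps and rays, exactly at the incidences recorded by the Clebsch graph (Figure~\ref{fig:clebsch}) for $d=4$ and by the Schl\"afli graph for $d=3$. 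Under the general-position hypothesis, the combinatorial type of each ${\rm trop}(V(f_{i+1}))$ is the unique tropical line or tropical conic through the relevant $P_j$, so the planar part is read off from $P_1,\dots,P_{9-d}$; and the attaching data of the climbing pieces, which is the higher-order valuative information illustrated by the parameter $v$ in Example~\ref{ex:modifytopetersen}, is likewise forced to be a determined tropical expression in the coordinates and valuations of the $P_j$.

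The main obstacle is precisely this control of how divisors lift through the accumulated modifications. Example~\ref{ex:modifytopetersen} shows that in general the tropical divisors in $\R^2$ do not suffice, since the outcome depends on classical concurrency data. For $d=5$ the theorem is exactly that example, where the three lines $F_{14},F_{24},F_{34}$ through $p_4$ are concurrent and the genericity of $P_4$ fixes the attachment producing the Petersen cone of Proposition~\ref{prop:deg45}. For $d=4,3$ the del Pezzo incidence structure forces all the concurrences occurring among the lines $F_{ij}$ and the conic(s), and I would make the ``sufficiently generic'' hypothesis explicit so that every such valuation becomes a determined tropical quantity in the $P_j$; for $d=3$ this requires the extra conditions flagged before the statement and a careful tracking of the order of the modifications in (\ref{eq:8fordeg3}). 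Running the construction then yields a tropical surface whose trees and bounded complex agree with those computed in Proposition~\ref{prop:deg45} and Theorem~\ref{thm:deg3}, confirming that the modification tower reproduces ${\rm trop}(X^0)$.
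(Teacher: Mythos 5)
Your overall architecture coincides with the paper's: you realize ${\rm trop}(X^0)$ as the top of the tower (\ref{eq:modisequence}), run an induction in which local irreducibility is preserved so that Proposition~\ref{prop:mod} and its corollary apply at each stage, and reduce everything to showing that the divisor modified at each step is determined by the $P_j$. The paper performs the same reduction, phrased as the claim that the final arrangement of $16$ or $27$ metric trees is determined by the positions of the $P_i$. The gap is at exactly the step you call ``the heart of the argument.'' You assert that the attaching data of the divisors over the earlier modifications --- the analogue of the hidden valuation $v$ in Example~\ref{ex:modifytopetersen} --- is ``forced to be a determined tropical expression in the coordinates and valuations of the $P_j$'' by making the genericity hypothesis strong enough. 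This is a restatement of the theorem, not a proof of it, and genericity alone cannot deliver it. Concretely, for $d=3$ consider the tree of the line $F_{14}$: only $7$ of its $10$ leaf markings ($E_1$, $E_4$, $F_{23}$, $F_{25}$, $F_{26}$, $F_{35}$, $F_{36}$) are visible from the planar arrangement. The remaining markings $G_1$, $G_4$, $F_{56}$ can collide with already-marked points no matter how generic the $P_j$ are --- e.g.\ the conic $G_1$ and the line $F_{14}$ both pass through $P_4$ by construction, so the planar picture only tells you that the $G_1$-marking lies \emph{somewhere} on an already-attached leaf ray. Your appeal to ``del Pezzo incidence structure forcing concurrences'' gestures at the right phenomenon but supplies no mechanism for extracting the metric position of these markings.

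The paper's mechanism, which your proposal lacks entirely, is the involution of Proposition~\ref{prop:invol}: each $10$-leaf tree double-covers a $5$-leaf tree, and on $F_{14}$ the involution is $E_1 \leftrightarrow G_4$, $E_4 \leftrightarrow G_1$, $F_{23} \leftrightarrow F_{56}$, $F_{25} \leftrightarrow F_{36}$, $F_{26} \leftrightarrow F_{35}$. Since each invisible leaf is exchanged with a visible one, the $10$-leaf metric tree is reconstructed from the $7$-leaf caterpillar by symmetry --- this is classical (always valid) information about the tritangent structure, not a consequence of tropical genericity. A second missing ingredient is the degenerate case for the conics $G_1,G_2,G_3$: when a point $P_j$ lies on the bounded edge of $G_i$ and the intersection $G_i \cap F_{ij}$ is a single point of multiplicity $2$, the involution fixes the cherry $\{P_j, F_{ij}\}$, and the paper must invoke the tropical Riemann--Hurwitz condition of \cite{BBM} for the degree-$2$ cover $h\colon T \to t$ to conclude that this cherry attaches at a $4$-valent vertex (this is precisely what produces the type (aaab) trees of Theorem~\ref{thm:deg3}). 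Without the involution and this Riemann--Hurwitz analysis, your induction cannot close: the divisor ${\rm trop}(D_{i+1})$ inside ${\rm trop}(X_i)$ is genuinely underdetermined by the planar data at the coincident markings, which is the very subtlety your own citation of Example~\ref{ex:modifytopetersen} warns about.
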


 \begin{proof}
The sequence of tropical modifications we use to go from $\R^2$
to ${\rm trop}(X^0)$ is determined if we know, for each $i$,
the correct divisor on each   $(-1)$-curve $C$
in the tropical model ${\rm trop}(X_i)$.
Then, the preimage of $C$ in the next surface ${\rm trop}(X_{i+1})$
is the modification $C'$ of the curve $C$ along that  divisor.
By induction,  each intermediate surface ${\rm trop}(X_{i})$ is locally irreducible, 
since it is obtained by modifying a  locally irreducible surface along a locally irreducible 
divisor. 
With this, Theorem~\ref{thm:modify}
follows from Proposition \ref{prop:mod}, applied to  both
the $i$-th surface and its $(-1)$-curves.
The case $d=5$ was covered  in Example~\ref{ex:modifytopetersen}. 
From the metric tree that represents the boundary divisor $C$ of $X^0$ 
we can derive  the corresponding trees on each intermediate surface ${\rm trop}(X_i)$
by  deleting leaves. Thus, to establish
Theorem~\ref{thm:modify}, it suffices to prove the following claim:
{\em 
the final arrangement of the ($16$ or $27$) metric trees on
the tropical del Pezzo surface
 ${\rm trop}(X^0)$
is  determined by the locations of the points $P_i$ in~$\TP^2$.}

Consider first the case $d=4$. The points
 $P_4$ and $P_5$ determine an arrangement
of plane tropical curves (\ref{eq:8fordeg4}) as shown in
Figure \ref{fig:curveconfig}. The conic $G$
through all five points looks like an ``inverted tropical line'',
with three rays in directions $P_1,P_2,P_3$. By the genericity assumption,
the points $P_4$ and $P_5$ are located on distinct rays of $G$.
These data determine a trivalent metric tree with five leaves, which we now label
by $E_1,E_2,E_3,E_4,E_5$. Namely, $P_4$ forms a cherry together
with the label of its ray, and ditto for $P_5$. For instance,
in Figure \ref{fig:curveconfig}, the cherries on the tree $G$
are $\{E_1,E_4\}$ and $\{E_2,E_5\}$, while $E_3$ is the
non-cherry leaf. This is precisely the tree sitting on the node
labeled $G$ in Figure \ref{fig:clebsch}. The lengths of the
two bounded edges of the tree $G$
are the distances from $P_4$ resp.~$P_5$ to the unique
vertex of the conic $G$ in $\RR^2$.
Thus the metric tree $G$ is easily determined from 
$P_4$ and $P_5$.
The other $15$ metric trees  can also be determined in a similar way 
from the configuration of points and curves in $\R^2$ and by performing a subset of the 
necessary modifications. Alternatively, we may use the transition rules
(\ref{eq:relabel1}) and (\ref{eq:relabel2}) to obtain the other $15$ trees from $G$.
This proves the above claim, and hence Theorem \ref{thm:modify},
for del Pezzo surfaces of degree $d=4$.

Consider now the case $d=3$. Here the arrangement of tropical plane curves
in $\R^2 \subset \TP^2$ consists of
three lines at infinity, $F_{12},F_{13},F_{23}$,
nine straight lines, $F_{14}, F_{15}, \ldots, F_{36}$,
three honest tropical lines, $F_{45}, F_{46},F_{56}$,
three conics that are ``inverted tropical lines'' $ G_4, G_5, G_6$,
and three conics with one bounded edge, $G_1,G_2,G_3$.
Each of these looks like a tree already in the plane, and it
gets modified to a $10$-leaf tree, like to ones in
Figures \ref{fig:treeaaaa} and~\ref{fig:treeaaab}.
We claim that these labeled metric trees are uniquely
determined by the positions of $P_4,P_5,P_6$ in $\R^2$.

Consider one of the $9$ straight lines in our arrangement, say, $F_{14}$.
If the points $P_4, P_5, P_6$ are 
generically chosen,
  $7$ of the $10$ leaves on the tree  $F_{ij}$ can be determined
from the diagram in $\R^2$. These come from the $7$ markings 
on the  line $F_{14}$ given~by
$ E_1, E_4,  F_{23}, F_{25}, F_{26},  F_{35}, F_{36}. $
The markings $E_1$ and $F_{23}$ are the points at infinity,
 the marking $E_4$ is the location of point $P_4$, and
the markings $F_{25}, F_{26},  F_{35}, F_{36}$ are the
points of intersection with those lines. Under our hypothesis,
these  $7$ marked points
on the line $F_{14}$ will be distinct. 
With this, $F_{14}$ is 
already a metric caterpillar tree with $7$ leaves.
The three markings which are missing are $G_1, G_4$ and $F_{56}$. 
Depending on the positions of  $P_4, P_5, P_6$,
the intersection points  of these three curves with the line $F_{14}$
may coincide with previously marked points. Whenever this happens,
 the position of the additional marking on the tree $F_{14}$ can be anywhere on the already attached leaf ray. Again, the actual position of the point on that
 ray may be determined by performing modifications along those curves. Alternatively, we use the involution
given in Corollary \ref{prop:invol}. The involution on the ten leaves of the desired tree $F_{14}$ is
$$ 
E_1 \leftrightarrow \underline{G_4}, \quad
E_4 \leftrightarrow \underline{G_1} ,\quad
F_{23} \leftrightarrow  \underline{F_{56}}, \quad
F_{25} \leftrightarrow F_{36} ,\quad
 F_{26} \leftrightarrow  F_{35}.
 $$
 Since the involution exchanges each of the three unknown leaves with
 one of the seven known leaves, we can easily construct the final
 $10$-leaf tree from    the $7$-leaf caterpillar.

  A similar argument works the other six lines
  $F_{ij}$, and the conics $G_4, G_5, G_6$. In these cases, $8$ of the $10$ marked points on a
  tree are determined from the arrangement in the plane, provided the choice of points is generic.  
 Finally, the conics $G_1, G_2, G_3$  are dual to subdivisions of lattice parallelograms of area $1$. 
 They may contain a bounded edge. 
 Suppose  no  point $P_j$ lies on the bounded edge of the conic $G_i$, then the positions of all $10$ marked points of the tree are visible from the 
 arrangement in the plane. If $G_i$ does contain a marked point $P_j$ on its bounded edge, then the tropical line $F_{ij}$ intersects $G_i$ in either a bounded edge or a single point with  
 intersection  multiplicity $2$, depending on the dual subdivision of $G_i$. In the first case the position of the marked point $F_{ij}$ is easily determined from the involution; the distance from a vertex of the bounded edge of $G_i$ to the marked point $F_{ij}$ must be equal to the  distance from $P_j$ to the opposite vertex of the bounded edge of $G_i$.  

If $G_i \cap F_{ij}$  is a single point of intersection multiplicity two, then $P_j$ and $F_{ij}$ form a cherry on the tree $G_i$ which is invariant under the involution. We  claim that this cherry attaches to the rest of the tree at a $4$-valent vertex. 
The involution on the $10$-leaf tree can also be seen as a tropical double cover from our
 $10$-leaf tree to a $5$-leaf tree, $h: T \rightarrow t$, where the $5$-leaf tree $t$ is labeled with the pair of markings interchanged by the involution. As mentioned in Corollary \ref{prop:invol},  this double cover comes from the classical curve in the del Pezzo  surface $X$. In particular, the double cover locally satisfies the tropical translation of the  Riemann-Hurwitz condition  \cite[Definition 2.2]{BBM}. In our simple case of a degree $2$ map between two trees, this local condition for a vertex $v$ of $T$ is
${\rm  deg}(v)   - d_{h, v}({\rm deg }(h(v)) - 2) - 2 \geq 0$, where ${\rm deg}$ denotes the valency of a vertex, and $d_{h, v}$ denotes the local degree of the map $h$ at $v$.
Suppose the two leaves did not attach at a four valent vertex, then they form a cherry, this cherry attaches to the rest of the tree by an edge $e$ which is adjacent to another vertex $v$ of the tree. The Riemann-Hurwitz condition is violated at $v$, since ${\rm deg}(v) = {\rm deg}(h(v)) = 3$ and $d_{h, v} = 2$. 

We conclude that
the tree arrangement can be recovered from the position of the points $P_1, P_2, \ldots\,$ in $\R^2$.
Therefore it is also possible
 to recover the tropical del Pezzo surface ${\rm trop}(X^0)$ by open modifications.
  In each case, we  recover the corresponding final $10$ leaf tree from the arrangement
  in $\TP^2$  plus our knowledge of the involution in Corollary \ref{prop:invol}.
  \end{proof}
  
  \begin{remark} \rm
      Like in the case $d=4$,   knowledge of transition rules among the 
  $27$ metric trees on ${\rm trop}(X^0)$ can greatly simplify their reconstruction. We
  give such a rule in Proposition~\ref{prop:inducedtrees}.
\end{remark}

In this section we gave a geometric construction of
tropical del Pezzo surfaces of degree $d \geq 3$,
starting from the points
$P_1,\ldots,P_{9-d}$ in the tropical plane $\TP^2$.
The lines and conics in $\TP^2$ that
correspond to the $(-1)$-curves are transformed,
by a sequence of open tropical modifications, into
the trees that make up the boundary of the del Pezzo surface.
Knowing these well-specified modifications of curves ahead of time
allows us to carry out a unique sequence
of open tropical modifications of surfaces,
starting with $\R^2$. In each step, going from
right to left in (\ref{eq:projsequence}), we modify
the surface along a divisor given by
one of the trees.

This gives a geometric construction
for the bounded complex in a tropical del Pezzo surface:
it is the preimage under (\ref{eq:projsequence})
of the  bounded complex in the arrangement in $\R^2$.
For instance, Figure~\ref{fig:clebsch2} is the preimage
of the parallelogram and the four triangles 
in Figure~\ref{fig:curveconfig}.

The same modification approach can be used to construct
(the bounded complexes of) any tropical plane in $\TP^n$
from its tree arrangement.
This provides a direct link between 
the papers \cite{HJJS} and \cite{Sha}.
That link should be useful for readers
of the text books \cite{MacStu} and \cite{MikRau}.

\section{Tropical Cubic Surfaces and their 27 Trees}
\label{sec:combinatorics}

This section is devoted to the combinatorial structure of tropical cubic surfaces.
 Throughout, $X$ is a smooth del Pezzo surface of degree $3$,
without Eckhart points, and $X^0$ the very affine surface obtained 
by removing  the $27$ lines from $X$. Recall that an {\em Eckhart point}
  is an ordinary triple point in the 
  union of the $(-1)$-curves.
 Going well  beyond the
summary statistics of Theorem \ref{thm:deg3}, we now offer
an in-depth study of the combinatorics of
the surface ${\rm trop}(X^0)$.

We begin with the construction of $\trp(X^0)$ 
from six points in $\TP^2$, as in
Section \ref{sec:mod}. The points
$P_5$ and $P_6$ are general in
$\R^2 \subset \TP^2$. The first four points are
the coordinate points
\begin{equation}
\label{eq:ourchoi}
P_1 = (0:-\infty:-\infty), \,\,
P_2 = (-\infty:0:-\infty), \,\,
P_3 = (-\infty:-\infty:0), \,\,
P_4 = (0:0:0). 
\end{equation}
Theorem  \ref{thm:modify} tells us
that  $\trp(X^0)$ is determined by the locations of
$P_5$ and $P_6$ when the points are generically chosen. 
There are two generic types, namely (aaaa) and (aaab), as shown 
in Figures \ref{fig:treeaaaa} and \ref{fig:treeaaab}. This raises
the question of how the type can be decided
from the positions of
$P_5$ and $P_6$. To answer that question, we shall
use {\em tropical convexity} \cite[\S 5.2]{MacStu}.
There are five generic types of tropical triangles,
depicted  here in Figures \ref{fig:trianglesaaab} and \ref{fig:trianglesaaaa}. 
The unique $2$-cell in such a tropical triangle has either $3,4,5$ or $6$ vertices.
Two of these have $4$ vertices, but only one type contains a parallelogram.
That is the type which gives (aaaa).

\begin{theorem} \label{thm:parallelogram}
Suppose that the tropical cubic surface 
constructed as in Theorem \ref{thm:modify}
has one of the two generic types.
Then it has  type (aaaa) if and only if the $2$-cell in 
the tropical triangle spanned by $P_4, P_5$ and $ P_6$ is a parallelogram. 
In all other cases,  it has type (aaab).
\end{theorem}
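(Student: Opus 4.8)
The plan is to reduce the dichotomy (aaaa) versus (aaab) to a single local condition on the three conics $G_1, G_2, G_3$, and then to read that condition off the tropical triangle $T$ spanned by $P_4, P_5, P_6$ through a finite case analysis. First I would invoke Theorem~\ref{thm:deg3}: the two generic combinatorial types are distinguished by whether all $27$ boundary trees are trivalent (type (aaaa)) or whether three of them carry a $4$-valent node (type (aaab)). By the analysis in the proof of Theorem~\ref{thm:modify}, the three trees that can fail to be trivalent are exactly those of the conics $G_1, G_2, G_3$, the $(-1)$-curves whose tropical models have a bounded edge, and such a tree acquires a $4$-valent node precisely when some marked point $P_j$ lies on the bounded edge of $G_i$ and the tropical intersection $G_i \cap F_{ij}$ is a single point of multiplicity two rather than a bounded segment. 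Since by hypothesis the surface is one of the two generic types, it is enough to detect a single such degenerate intersection: its presence forces type (aaab) and its absence forces type (aaaa), and in particular the three conics must behave uniformly.

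So the task becomes to decide, from the positions of $P_4, P_5, P_6$, whether any of the conics $G_1, G_2, G_3$ meets its partner line $F_{ij}$ tangentially along its bounded edge. Each of these three conics passes through all of $P_4, P_5, P_6$ (together with two of the coordinate points $P_1, P_2, P_3$), so their dual subdivisions, hence the shapes and locations of their bounded edges, are governed entirely by the relative configuration of $P_4, P_5, P_6$. The right invariant of that configuration is the combinatorial type of $T$. Using tropical convexity \cite[\S 5.2]{MacStu}, I would next enumerate the generic types of $T$: there are exactly five, distinguished by the number of vertices ($3$, $4$, $5$, or $6$) of the unique bounded $2$-cell, two of these having four vertices and exactly one of those being a parallelogram (Figures~\ref{fig:trianglesaaab} and~\ref{fig:trianglesaaaa}).

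With the five types in hand, I would carry out the case analysis. For each type I would compute the dual subdivisions of $G_1, G_2, G_3$ from the point positions, locate the point of $\{P_4, P_5, P_6\}$ lying on each bounded edge, and determine the intersection multiplicity of that conic with the corresponding line $F_{ij}$, reading off the valence of the resulting vertex via the involution of Proposition~\ref{prop:invol} exactly as in the proof of Theorem~\ref{thm:modify}. The claim to verify is that the parallelogram type is precisely the one in which all three intersections are bounded edges, so that $G_1, G_2, G_3$ are trivalent and the surface is (aaaa), whereas each of the remaining four types forces at least one multiplicity-two intersection, producing the three $4$-valent nodes of type (aaab).

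The main obstacle is precisely this last matching: translating the global shape of the $2$-cell of $T$ into the local tangency behaviour of the three tropical conics. Concretely, one must show that a parallelogram $2$-cell is the unique configuration in which every relevant conic--line pair meets in a segment, and that the hexagonal, pentagonal, triangular, and non-parallelogram quadrilateral types each degenerate one such intersection to a multiplicity-two point. This amounts to careful bookkeeping of the dual subdivisions of the size-$2$ triangle for each $G_i$ against the tropical stable intersection criterion; here the two-type dichotomy of Theorem~\ref{thm:deg3} keeps the bookkeeping honest, since it guarantees uniform behaviour of the three conics and so reduces the verification to exhibiting a single degenerate intersection in each non-parallelogram case.
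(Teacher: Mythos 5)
Your proposal inverts the dichotomy at its very first step, and everything downstream inherits the error. You identify type (aaaa) with ``all $27$ trees trivalent'' and type (aaab) with ``three trees having a $4$-valent node,'' but the paper's data says the opposite: matching the f-vectors in Theorem~\ref{thm:deg3} against Table~\ref{tab:cubicsurf}, the second type there ($77$ vertices, $148$ edges, $213$ rays, $30+42=72$ bounded $2$-cells, three $4$-valent trees) is the cone (${\rm aa}_2{\rm a}_3{\rm a}_4$), i.e.\ type (aaaa), while the first type ($78$, $150$, $216$, $31+42=73$, all trivalent) is (${\rm aa}_2{\rm a}_3$b), i.e.\ type (aaab). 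This is consistent with the paper's own case analysis: when a marked point $P_j$ lies on the slope $-1$ bounded edge of $G_1$, the slope $1$ line $F_{1j}$ meets $G_1$ in a single point of multiplicity two (the lattice determinant of $(1,1)$ and $(1,-1)$ is $2$), the cherry $\{E_j,F_{1j}\}$ attaches at a $4$-valent vertex --- and this configuration produces type (aaaa), not (aaab) (see Figure~\ref{fig:markedConics2}, which shows the multiplicity-two marking among the (aaaa) cases). Your criterion ``parallelogram $\Leftrightarrow$ all three conic--line intersections are segments $\Leftrightarrow$ no $4$-valent node $\Leftrightarrow$ (aaaa)'' would therefore prove the converse of the theorem: in the paper it is exactly the configurations with \emph{no} marked point on the bounded edge of $G_1$ (or the slope $+1$, same-side case) that yield (aaab), with all trees trivalent, while (aaaa) arises precisely when a marked point sits on the bounded edge, either tangentially (slope $-1$) or along a segment (slope $+1$ with $P_j,P_k$ on opposite sides).

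Two secondary points also fail. First, your claim of ``uniform behaviour of the three conics'' within a type is false: in type (aaaa) the tree $G_1$ can be $4$-valent (slope $-1$ case) or trivalent (slope $+1$, opposite-sides case, where the involution places $F_{1j}$ at the reflected point of the bounded segment), so the three $4$-valent trees of an (aaaa) surface need not be labelled $G_1,G_2,G_3$, and detecting ``a single degenerate intersection'' does not decide the type. Second, the correct reason a single tree suffices is different from yours: the paper observes that the two generic tree arrangements in Figures~\ref{fig:treeaaaa} and~\ref{fig:treeaaab} share no combinatorial tree types at all, so under the genericity hypothesis the combinatorial type of the one tree $G_1$ determines the type of the surface; it then classifies all markings of $G_1$ up to symmetry and only afterwards translates the answer into the shape of the $2$-cell of the tropical triangle (parallelogram versus triangle, trapezoid, pentagon, hexagon, as in Figures~\ref{fig:trianglesaaab} and~\ref{fig:trianglesaaaa}). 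Your plan of enumerating the five triangle types and chasing dual subdivisions could in principle be repaired, but only after the (aaaa)/(aaab) labels and the tangency criterion are swapped to match the actual combinatorics.
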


Note that the condition that the six points $P_i$ are in general position is not sufficient to imply that the tropical cubic surface is generic. In some cases, the corresponding point in the Naruki fan $\mathrm{trop}(\mathcal{Y}^0)$ will lie on the boundary of the subdivision induced by the map from $\mathrm{trop}(\mathcal{G}^0)$, as described in Section \ref{sec2}
and below. If so, the tropical cubic surface is degenerate.

\begin{figure}
\begin{center}
\includegraphics[scale=0.4]{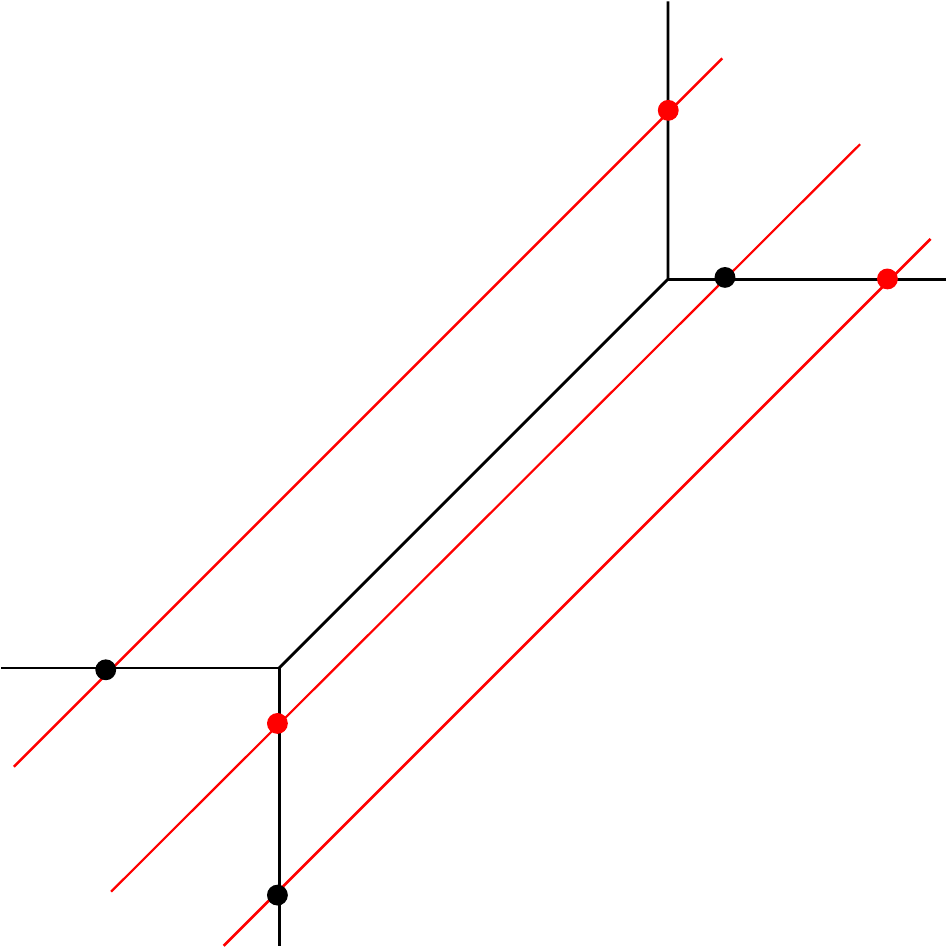}\qquad
\includegraphics[scale=0.4]{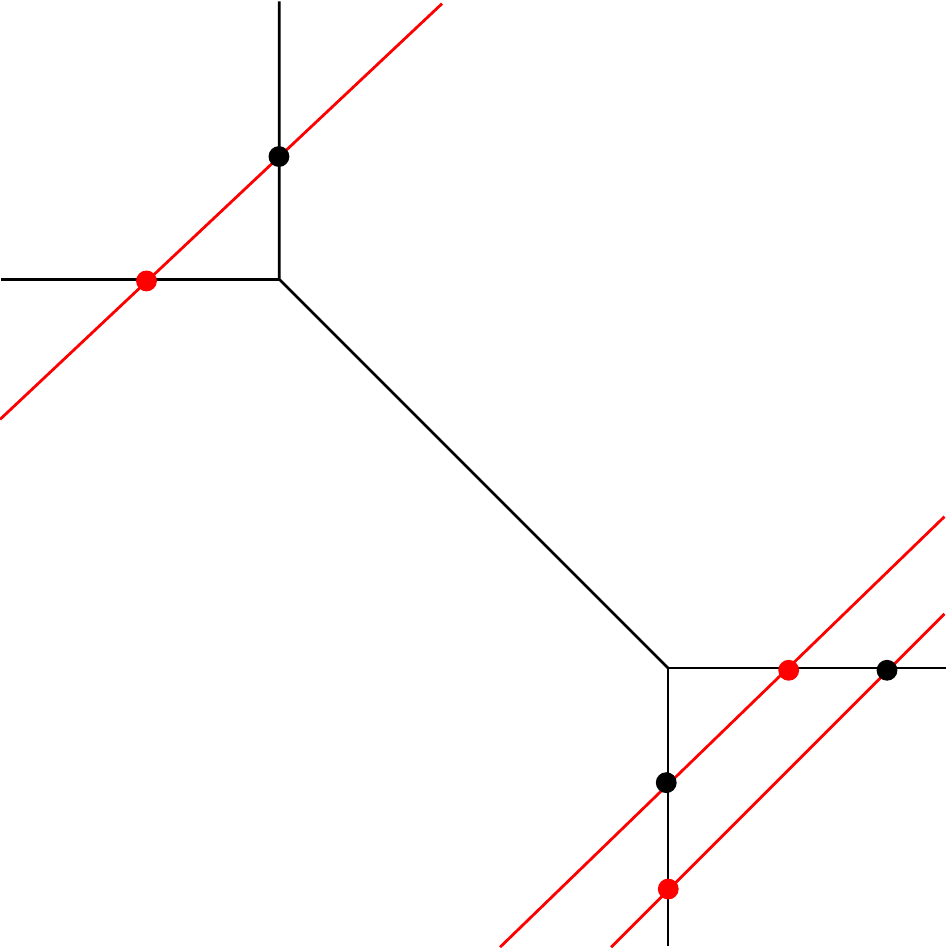}\qquad 
\includegraphics[scale=0.4]{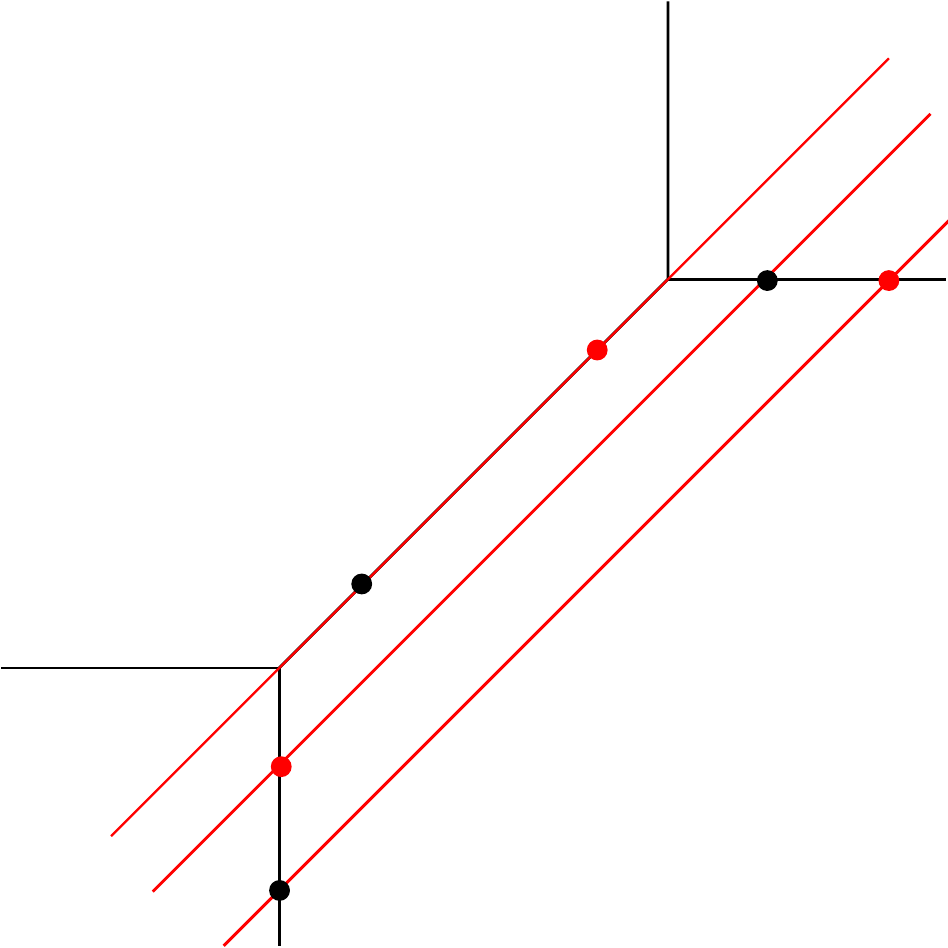} 
\end{center}
\vspace{-0.25in}
\caption{Markings of a conic $G_1$ which produce trees of type (aaab).
 \label{fig:markedConics1}}
 \medskip
\end{figure}

\begin{figure}
\begin{center}
\includegraphics[scale=0.5]{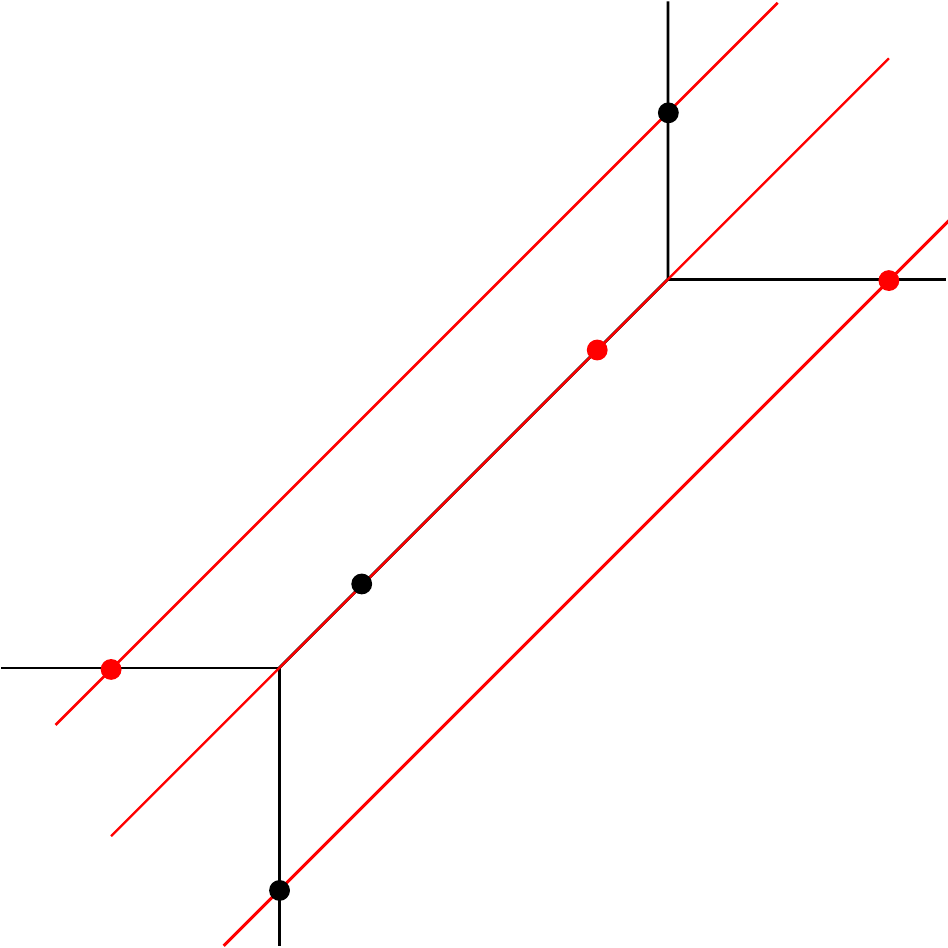} \qquad  \qquad  \qquad
\includegraphics[scale=0.5]{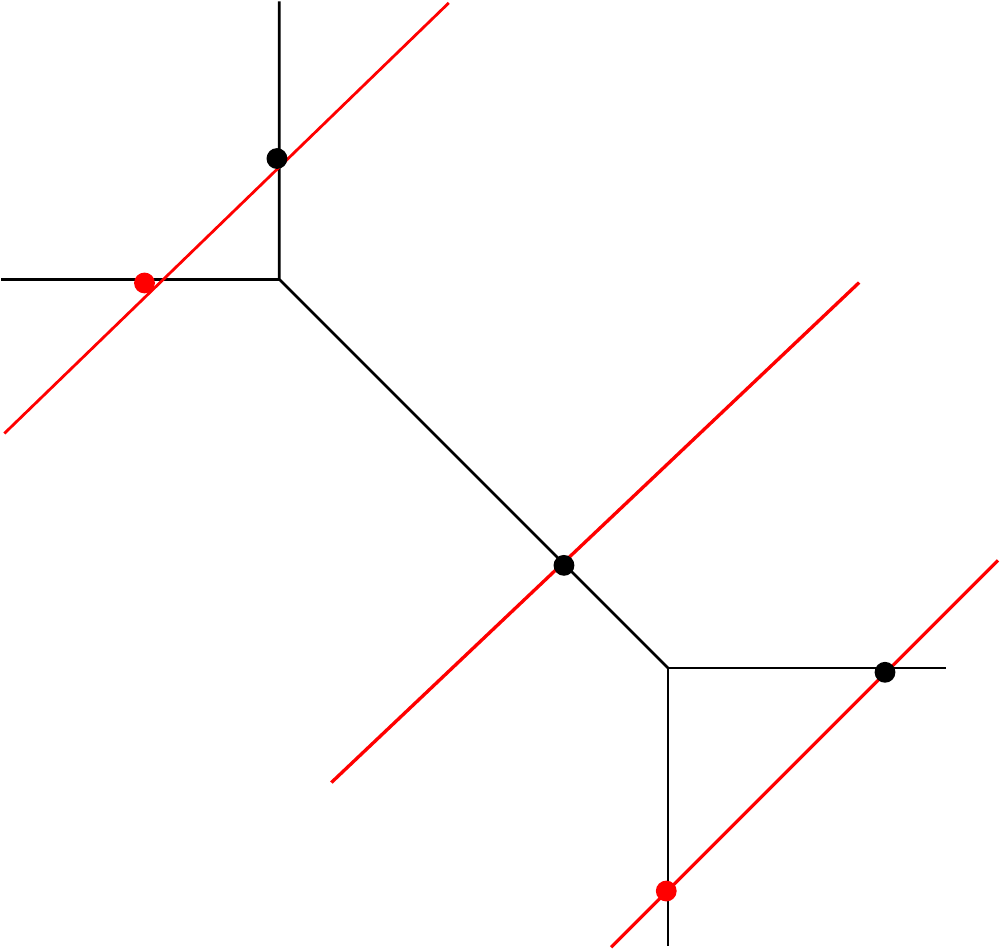} 
\put(-80, 55){\small{$2$}}
\end{center}
\vspace{-0.3in}
\caption{Markings of a conic $G_1$ which produce trees of type $(aaaa)$.
 \label{fig:markedConics2}}
\end{figure}

\begin{proof}[Proof of Theorem~\ref{thm:parallelogram}]

The tree arrangements for the  two types of generic surfaces
consist of distinct combinatorial types, i.e.~there is no overlap in
Figures  \ref{fig:treeaaaa} and  \ref{fig:treeaaab}.
    Therefore, when the tropical cubic surface is generic, it is enough to determine the combinatorial type of a single tree. We do this for the 
    conic $G_1$. 
Given our choices of points  (\ref{eq:ourchoi}) in $\TP^2$,
the tropical conic $G_1$ is dual to the Newton polygon with vertices $(0, 0), (1, 0), (0, 1)$, and $(1, 1)$. 
The triangulation has one interior edge, either of slope $1$ or of slope $-1$. 
We claim the following:
 
 \medskip
 
{\em
The tropical cubic surface ${\rm trop}(X^0)$ has type (aaaa) if and only if the following holds:
\begin{enumerate}
\item The bounded edge of the  conic $G_1$ has slope $-1$ and contains a marked point $P_j$, or \item the bounded edge of the conic $G_1$ has slope $1$ and contains a marked point $P_j$, and the other two points $P_j, P_k$ lie on opposite sides of
 the line spanned by the bounded edge.
 \end{enumerate}
}

\begin{figure}[b]
\begin{center}
\vspace{-0.15in}
\includegraphics[scale=0.5]{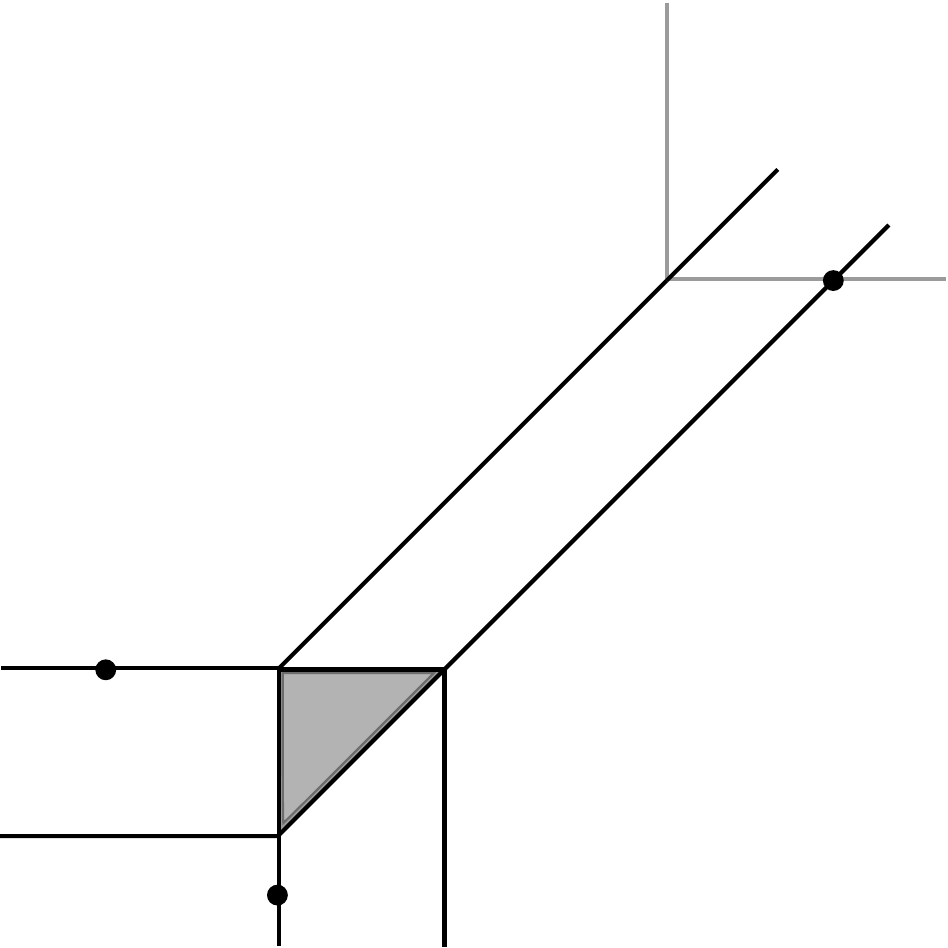}\qquad
\includegraphics[scale=0.5]{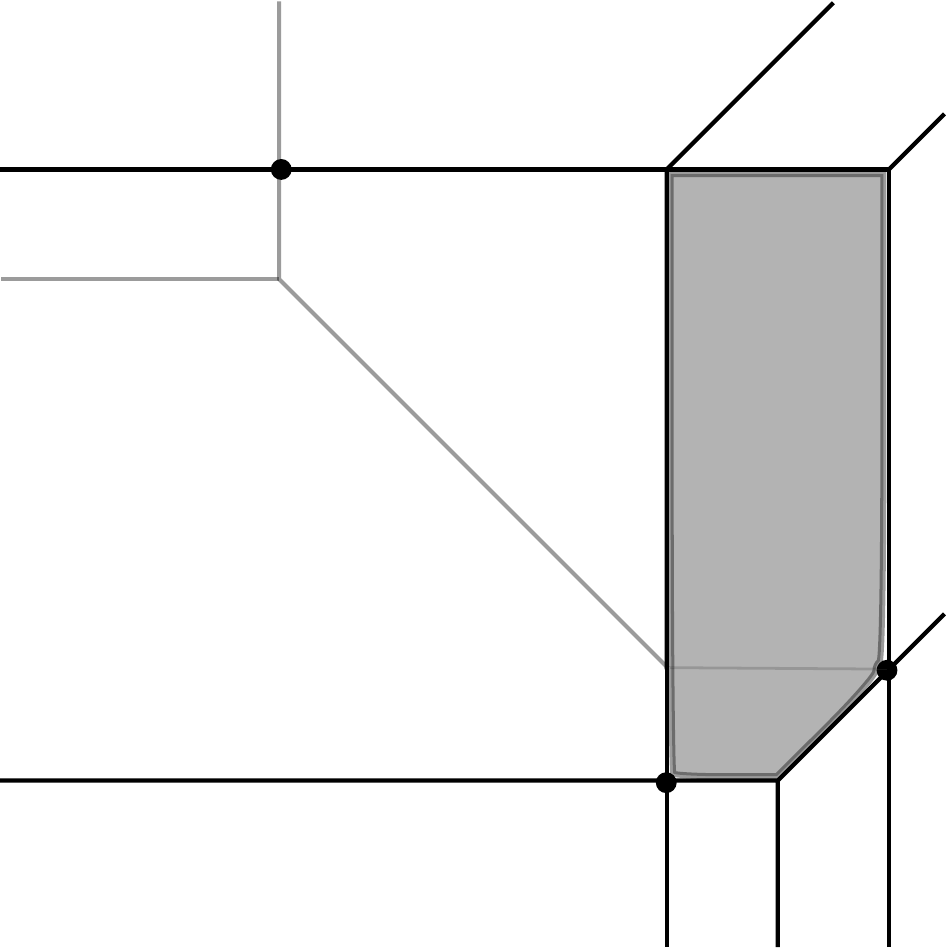}\qquad
\includegraphics[scale=0.5]{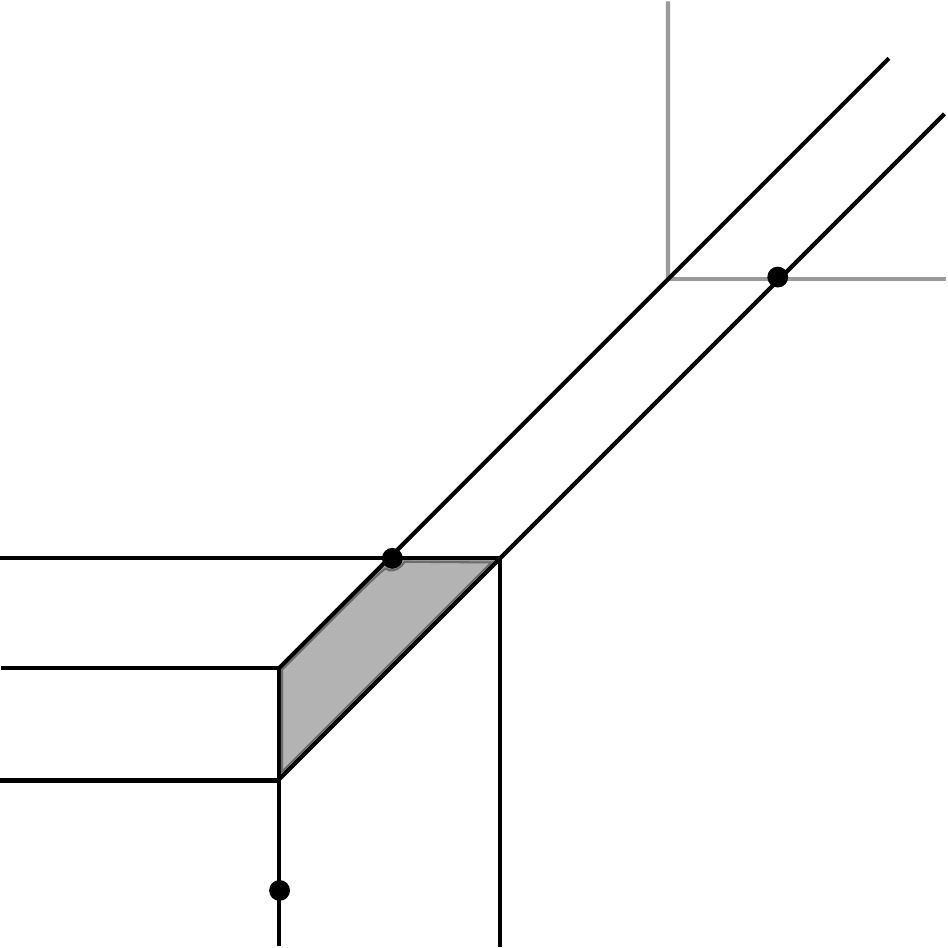}
\vspace{-0.13in}
\caption{The tropical triangles formed by points on $G_1$ as in Figure \ref{fig:markedConics1},
giving type (aaab).
\label{fig:trianglesaaab}}
\end{center}
\end{figure}

\begin{figure}
\begin{center}
\vspace{-0.13in}
\includegraphics[scale=0.56]{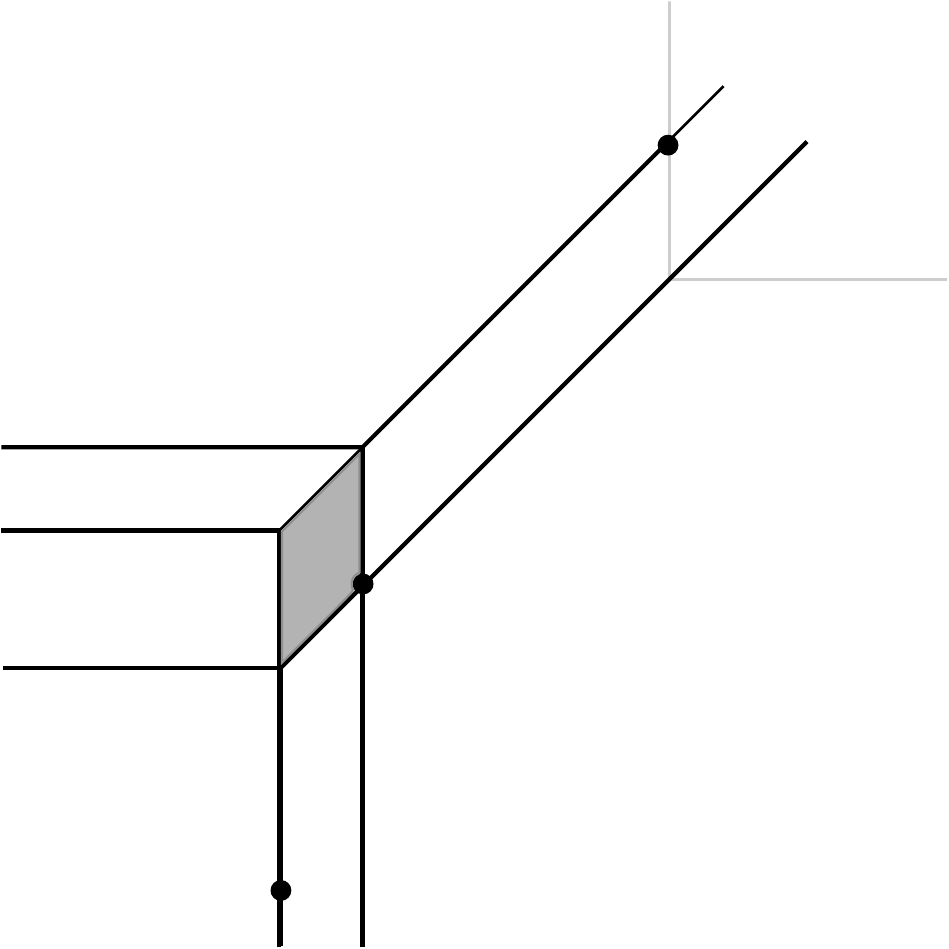}\qquad\qquad
\includegraphics[scale=0.5]{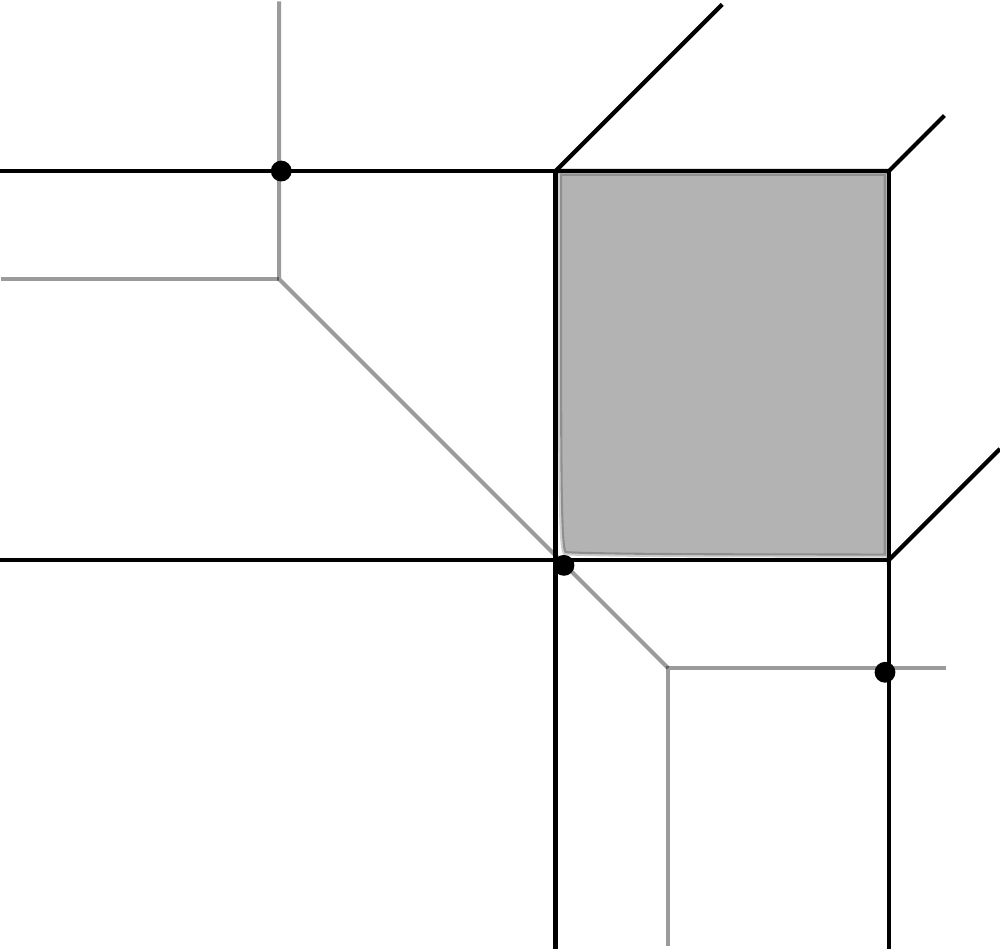}
\vspace{-0.13in}
\caption{The tropical triangles formed by points on $G_1$ as in Figure \ref{fig:markedConics2},
giving type (aaaa).
\label{fig:trianglesaaaa}}
\end{center}
\end{figure}

To show this, we follow the proof of Theorem \ref{thm:modify}. For each configuration of $P_4$, $P_5$, $P_6$ on the conic $G_1$, we draw lines
with slope $1$ through these points. These are the tropical lines $F_{14}$, $F_{15}$, $F_{16}$. 
Each intersects $G_1$ at one further point. These are the images of $E_4,E_5,E_6$ under the tree involution, {\it i.e.}~the points labeled $F_{14}$, $F_{15}$, $F_{16}$ on the tree $G_1$. Together with $E_2$, $E_3$, $F_{12}$ and $F_{13}$ lying at infinity of $\mathbb{TP}^2$, we can reconstruct a tree with $10$ leaves. Then, we can identify the type of the tree arrangement. We did this for all possible configurations up to symmetry. Some of the results are shown in Figures \ref{fig:markedConics1} and 
 \ref{fig:markedConics2}. The claim follows.
 
To derive the theorem from the claim, we must consider the tropical convex hull
 of the points $P_4, P_5, P_6$ in the above cases. 
As an example, the $2$-cells of the tropical triangle 
 corresponding to the trees in Figures \ref{fig:markedConics1} and \ref{fig:markedConics2} are shown in Figures \ref{fig:trianglesaaab} and \ref{fig:trianglesaaaa} respectively.  
The markings of $G_1$ producing a type (aaaa) tree always give parallelograms. Finally, if the marking of a conic produces a type (aaab) tree then the $2$-cell may have $3, 4, 5,$ or $6$ vertices. However, if it has $4$ vertices, then it is a trapezoid with only one pair of parallel edges.
\end{proof}

We next discuss some relations among the $27$ boundary trees  of a 
tropical cubic surface~$X$. Any pair of  disjoint $(-1)$-curves on $X$ meets exactly five other $(-1)$-curves. Thus, two $10$-leaf trees $T$ and $T^{\prime}$ representing disjoint $(-1)$-curves have exactly five leaf labels in common. Let $t$ and $t'$ denote the $5$-leaf trees 
constructed from $T$ and $T^{\prime}$ as in the proof of Proposition \ref{prop:invol}.
Thus $T$ double-covers $t$, and $T'$ double-covers $t'$.
Given a subset $E$ of the leaf labels of a tree $T$,
we write $T|_E$ for the subtree of $T$ that is spanned by
the leaves labeled with $E$.

\begin{proposition} \label{prop:inducedtrees}
Let $T$ and $T^{\prime}$ be the trees corresponding to disjoint $(-1)$-curves on
a cubic surface $X$, and  $E$ the set of five leaf labels common to $T$ and $T^{\prime}$.
Then $t = T^{\prime}|_E$  and 
 $t^{\prime} = T|_E$.
\end{proposition}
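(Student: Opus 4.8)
The plan is to reduce the statement to a classical equality of cross-ratios and then invoke the dictionary between valuated cross-ratios and metric trees that was already used in the proof of the Corollary in Section~\ref{sec:intro} (see \cite[Proposition~6.5.1]{MacStu} and \cite[Example~5.2]{RSS}). Write $L,L'$ for the two disjoint $(-1)$-curves corresponding to $T,T'$, and let $M_1,\ldots,M_5$ be the five $(-1)$-curves meeting both of them, so their five leaf labels constitute $E$. By construction $T|_E$ (resp.\ $T'|_E$) is the tropicalization of the $\PP^1 \cong L$ (resp.\ $\PP^1 \cong L'$) marked by the five points $L\cap M_i$ (resp.\ $L'\cap M_i$), and its edge lengths are the valuations of the cross-ratios of these five marked points. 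Thus it suffices, for each claimed equality, to exhibit an isomorphism of five-marked $\PP^1$'s over the ground field that matches the relevant markings and their labels.

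First I recall the double cover underlying $t$. Projection from $L$ in $\PP^3$ realizes the pencil of planes through $L$ as a $\PP^1$, call it $B_L$, and the associated conic-bundle projection restricts on $L$ to the degree-two map $L\to B_L$ appearing in the proof of Proposition~\ref{prop:invol}. Its five special values $q_1,\ldots,q_5\in B_L$ are the five tritangent planes through $L$, and $t$ is precisely the tropicalization of $(B_L;q_1,\ldots,q_5)$, the leaf $q_i$ carrying the label of the tritangent pair of $L$. The key geometric step is that, since $L\cap L'=\emptyset$, projection from $L$ restricts to an \emph{isomorphism} $L'\xrightarrow{\sim}B_L$ (no plane through $L$ can contain $L'$, else $L$ and $L'$ would meet). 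I would then check that this isomorphism sends $L'\cap M_i$ to $q_i$: the plane $\mathrm{span}(L,\,L'\cap M_i)$ contains $L$, the point $L'\cap M_i\in M_i$, and the point $L\cap M_i\in L$, hence contains all of $M_i$, so it is the tritangent plane $\{L,M_i,M_i'\}$, i.e.\ $q_i$. As the five points $L'\cap M_i$ are distinct (here $X$ is smooth without Eckardt points) and the projection is a bijection, they map bijectively onto $q_1,\ldots,q_5$; in particular exactly one line of each tritangent pair of $L$ lies in $E$, which fixes the label matching. Therefore $(L';\,L'\cap M_i)\cong(B_L;q_i)$ as marked $\PP^1$'s, their cross-ratios coincide, and so $T'|_E=t$. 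Interchanging the roles of $L$ and $L'$ gives $T|_E=t'$.

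The cross-ratio/metric-tree dictionary and the bookkeeping of leaf labels are routine and proceed exactly as in the proof of the Corollary. The main obstacle I anticipate is the central projective-geometry identification — that projection from $L$ carries each $L'\cap M_i$ to the tritangent plane $q_i$ — together with confirming that the involution partner $M_i'$ of $M_i$ on $L$ does not itself meet $L'$, so that the five common lines do land on five \emph{distinct} special values. Both facts are instances of the incidence combinatorics of the $27$ lines, which I would verify on one representative disjoint pair, say $L=G_1$, $L'=E_1$, where $E=\{F_{12},\ldots,F_{16}\}$, the partner of $F_{1i}$ in the tritangent of $G_1$ is $E_i$ (which does not meet $E_1$), and the partner of $F_{1i}$ in the tritangent of $E_1$ is $G_i$; the general case then follows since $W(\mathrm{E}_6)$ acts transitively on ordered pairs of disjoint lines while preserving the entire incidence structure.
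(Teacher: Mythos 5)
Your proposal is correct, and its skeleton is exactly that of the paper's proof: both arguments reduce the statement to the identity between the cross-ratios of the five points $L\cap M_i$ on one line and the cross-ratios of the five tritangent planes through the disjoint line, and then invoke the valuated cross-ratio/metric-tree dictionary of \cite[Proposition 6.5.1]{MacStu}. The one genuine difference is how that cross-ratio identity is justified: the paper simply cites Naruki \cite[Section 4]{Naruki}, whereas you prove it from scratch -- projection from $L$ is an isomorphism $L'\xrightarrow{\sim}B_L$ because $L\cap L'=\emptyset$, and it carries $L'\cap M_i$ to the tritangent plane $\mathrm{span}(L,\,L'\cap M_i)\supset M_i$, since that plane contains the two distinct points $L\cap M_i$ and $L'\cap M_i$ of $M_i$. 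This synthetic verification, together with your check that exactly one member of each tritangent pair of $L$ meets $L'$ (which uses the no-Eckhart-point hypothesis standing in Section~\ref{sec:combinatorics} to keep the five images distinct, and which the paper leaves implicit), makes your write-up a self-contained replacement for the citation at essentially no extra cost.
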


\begin{proof}
The five lines that meet two disjoint $(-1)$-curves $C$ and $C'$
define five points on $C$ and five tritangent planes containing $C'$.
The cross-ratios among the former are equal to
the cross-ratios among the latter modulo $C'$, see \cite[Section 4]{Naruki}.
The proposition follows because the metric trees can be derived from the
valuations of all the various cross ratios.
\end{proof}

Proposition \ref{prop:inducedtrees} suggests a combinatorial method
for recovering the entire arrangement of $27$ trees on ${\rm trop}(X^0)$
from a single tree $T$. Namely, for any tree $T^{\prime}$ that is disjoint from $T$,
 we can recover both $t^{\prime}$ and $T'|_E$. Moreover, for any of the $10$  trees $T_i$ 
that are disjoint from both $T$ and $T^{\prime}$,
  with labels $E_i$ common with $T$, we can determine $   T^{\prime}|_{E_i}$ as well.
   Then $T'$ is an amalgamation of $t'$, $T'|_{E}$,
   and the $10$ subtrees $   T^{\prime}|_{E_i}$.
  This amalgamation process is reminiscent of 
  a tree building algorithm in phylogenetics known as
   {\em quartet puzzling} \cite{BrySte}.

\smallskip

\begin{figure}[b]
\centering
\vspace{-0.15in}
\includegraphics[scale=0.8]{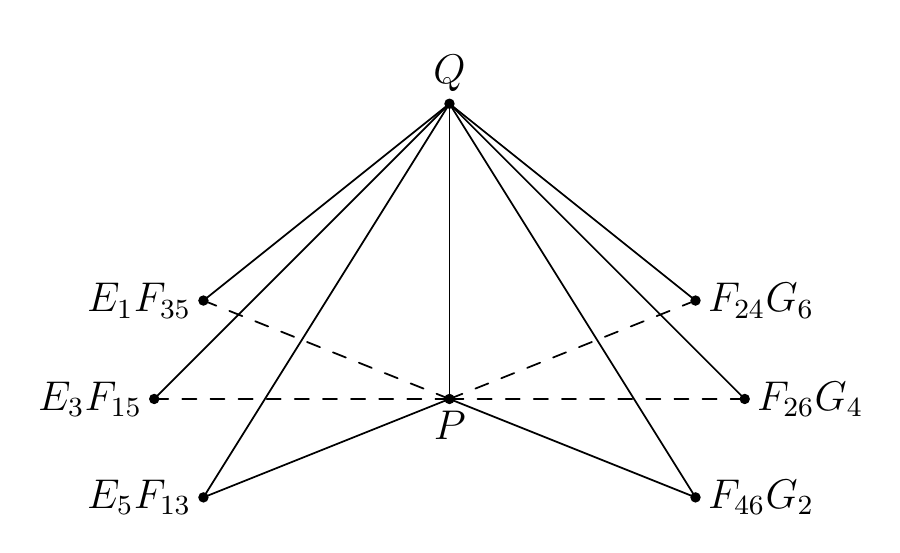}
\vspace{-0.2in}
\caption{The bounded complex of  the tropical cubic surface of type (a)
\label{figure:finite_part_a}}
\end{figure}

\begin{figure}
\centering
\vspace{-0.15in}
\includegraphics[scale=0.7]{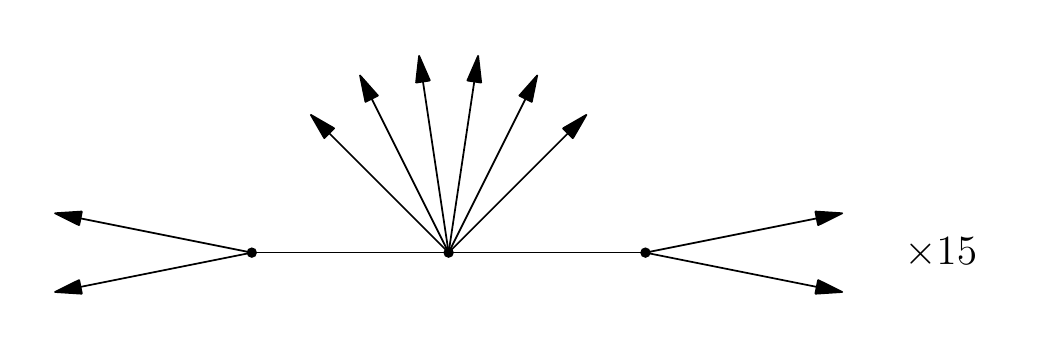}
\includegraphics[scale=0.7]{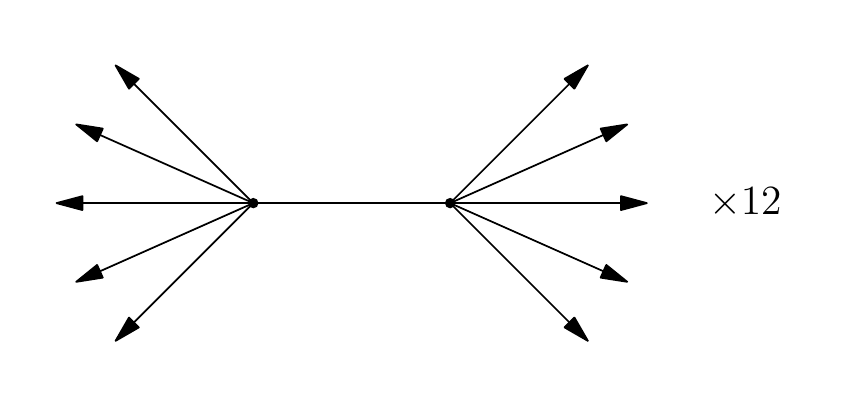}
\vspace{-0.15in}
\caption{The $27$ trees on the tropical cubic surface of  type (a)
\label{figure:tree_a}}
\end{figure}

We next examine tropical cubic surfaces of
 non-generic types. These surfaces are obtained from
 non-generic fibers of the vertical map on the right in
 (\ref{equation:fan_commutative_diagram}).
 We use the subdivision of the 
 Naruki fan ${\rm trop}(\mathcal{Y}^0)$
  described in Lemma~\ref{lem:barycentric}.
  There are five types of rays in this subdivision. We label them (a), (b), (${\rm a}_2$), 
  (${\rm a}_3$), (${\rm a}_4$). 
   A ray of type (${\rm a}_k$) is a positive linear combination of $k$ rays of type (a).
  The new rays (${\rm a}_2$),   (${\rm a}_3$), (${\rm a}_4$) form the barycentric subdivision of an
          (aaaa) cone.
          With this,
          the maximal cones in the subdivided Naruki fan are called
(${\rm aa}_2 {\rm a}_3 {\rm a}_4$) and (${\rm aa}_2 {\rm a}_3$b).
They are known as the generic types (aaaa) and (aaab) in the previous sections.
A list of all $24$ cones, up to symmetry, is presented in
          the first column of Table  \ref{tab:cubicsurf}.

The fiber of ${\rm trop}(\mathcal{G}^0)
\rightarrow {\rm trop}(\mathcal{Y}^0)$ over any point in the interior of
a maximal cone is a tropical cubic surface.
However, some special fibers
have dimension $3$. Such fibers
contain infinitely many tropical cubic surfaces,
including those with
  Eckhart points.  
   Removing such Eckhart points is a key issue in \cite{HKT}.
  We do this by considering  the {\it stable fiber}, 
  {\it i.e.~}the limit of the generic fibers obtained by perturbing the base point by an infinitesimal. Alternatively, 
  the tree arrangement of the 
   stable fiber is found by setting
 some edge lengths to $0$
 in Remark~\ref{rem:paraxxx}.
  We computed representatives for all stable fibers.
  Our results are shown in Table~\ref{tab:cubicsurf}.

\begin{figure}[h]
\centering
\vspace{-0.15in}
\includegraphics[scale=0.7]{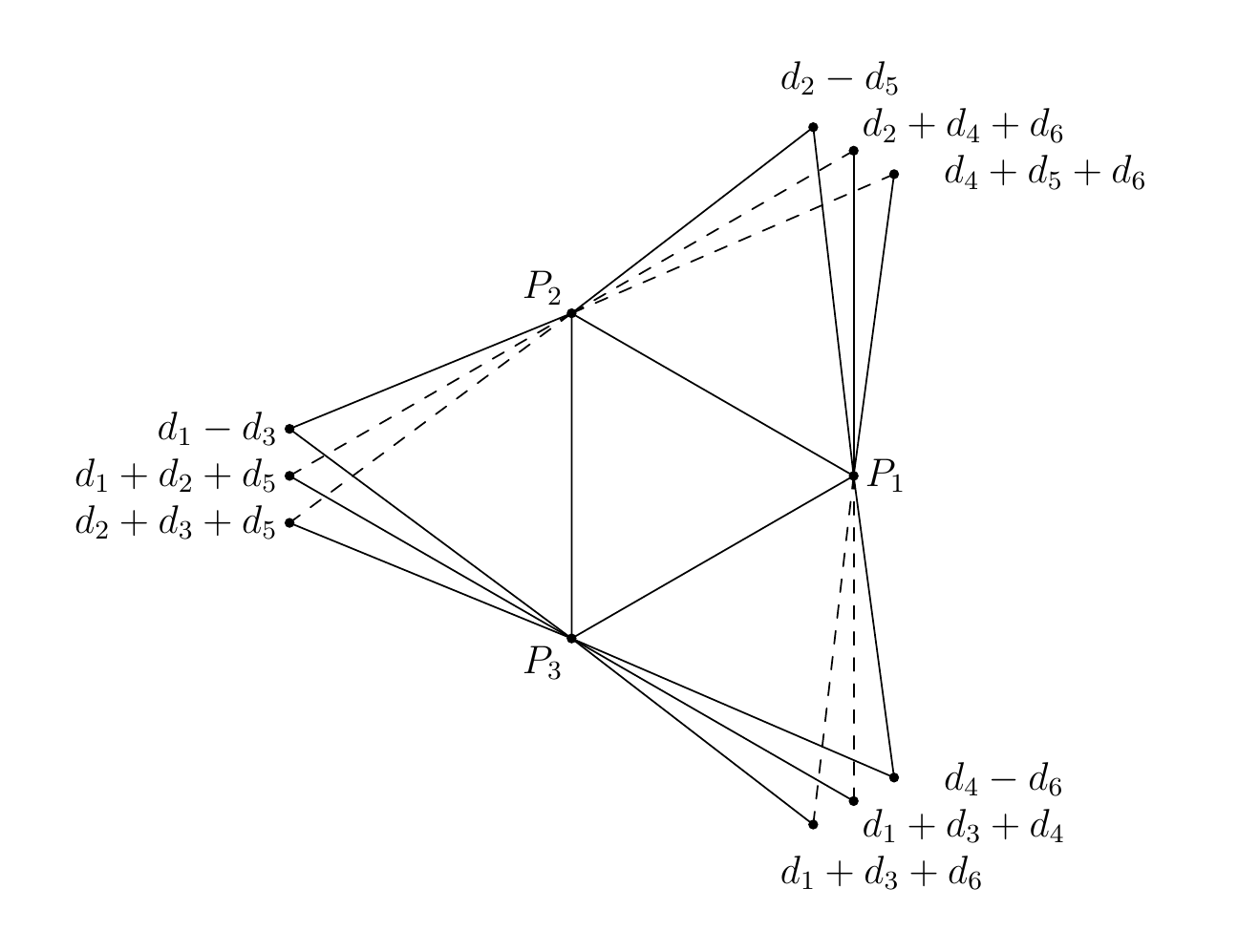}
\vspace{-0.27in}
\caption{The bounded complex of the tropical cubic surface of type (b)
\label{figure:finite_part_b}}
\end{figure}

\begin{figure}[b]
\centering
\vspace{-0.15in}
\includegraphics[scale=0.7]{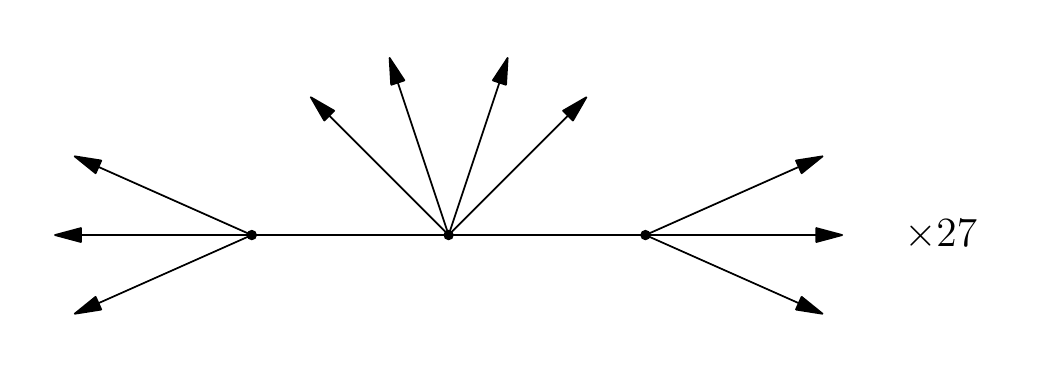}
\vspace{-0.17in}
\caption{The $27$ trees on the tropical cubic surface of type (b)
\label{figure:tree_b}}
\end{figure}

\begin{table}[h]
\centering
\begin{tabular}{c|c|c|cc|cccc}
Type & \#cones in moduli & Vertices & Edges & Rays & Triangles & Squares & Flaps & Cones \\
\hline
$0$ & 1 & 1 & 0 & 27 & 0 & 0 & 0 & 135 \\
\hline
(a) & 36 & 8 & 13 & 69 & 6 & 0 & 42 & 135 \\
(${\rm a}_2$) & 270 & 20 & 37 & 108 & 14 & 4 & 81 & 135 \\
(${\rm a}_3$) & 540 & 37 & 72 & 144 & 24 & 12 & 117 & 135 \\
(${\rm a}_4$) & 1620 & 59 & 118 & 177 & 36 & 24 & 150 & 135 \\
(b) & 40 & 12 & 21 & 81 & 10 & 0 & 54 & 135 \\
\hline
(${\rm a} {\rm a}_2$) & 540 & 23 & 42 & 114 & 13 & 7 & 87 & 135 \\
(${\rm aa}_3$) & 1620 & 43 & 82 & 156 & 22 & 18 & 129 & 135 \\
(${\rm aa}_4$) & 540 & 68 & 133 & 195 & 33 & 33 & 168 & 135 \\
(${\rm a}_2 {\rm a}_3$) & 1620 & 43 & 82 & 156 & 22 & 18 & 129 & 135 \\
(${\rm a}_2 {\rm a}_4$) & 810 & 71 & 138 & 201 & 32 & 36 & 174 & 135 \\
(${\rm a}_3 {\rm a}_4$) & 540 & 68 & 133 & 195 & 33 & 33 & 168 & 135 \\
(ab) & 360 & 26 & 48 & 123 & 16 & 7 & 96 & 135 \\
(${\rm a}_2$b) & 1080 & 45 & 86 & 162 & 24 & 18 & 135 & 135 \\
(${\rm a}_3$b) & 1080 & 69 & 135 & 198 & 34 & 33 & 171 & 135 \\
\hline
(${\rm aa}_2 {\rm a}_3$) & 3240 & 46 & 87 & 162 & 21 & 21 & 135 & 135 \\
(${\rm aa}_2 {\rm a}_4$) & 1620 & 74 & 143 & 207 & 31 & 39 & 180 & 135 \\
(${\rm aa}_3 {\rm a}_4$) & 1620 & 74 & 143 & 207 & 31 & 39 & 180 & 135 \\
(${\rm a}_2 {\rm a}_3 {\rm a}_4$) & 1620 & 74 & 143 & 207 & 31 & 39 & 180 & 135 \\
(${\rm aa}_2$b) & 2160 & 48 & 91 & 168 & 23 & 21 & 141 & 135 \\
(${\rm aa}_3$b) & 3240 & 75 & 145 & 210 & 32 & 39 & 183 & 135 \\
(${\rm a}_2 {\rm a}_3$b) & 3240 & 75 & 145 & 210 & 32 & 39 & 183 & 135 \\
\hline
(${\rm aa}_2 {\rm a}_3 {\rm a}_4$) & 3240 & 77 & 148 & 213 & 30 & 42 & 186 & 135 \\
(${\rm aa}_2 {\rm a}_3$b) & 6480 & 78 & 150 & 216 & 31 & 42 & 189 & 135
\end{tabular}
\caption{All combinatorial types of tropical cubic surfaces
\label{tab:cubicsurf}}
\end{table}

We  explain the two simplest non-trivial cases. The $36$ type (a) rays in the Naruki fan are in bijection with the $36$ positive roots of $\mathrm{E}_6$. Figure \ref{figure:finite_part_a} shows the bounded cells in the stable fiber over the (a) ray corresponding to root $r=d_1+d_3+d_5$.
It consists of six triangles sharing a common edge.
The two shared vertices are labeled by $P$ and $Q$.
  Recall the identification of the roots of $\mathrm{E}_6$ involving $d_7$ with the $27$ $(-1)$-curves from (\ref{eq:dtoEFG}). 
Then, considering  $E_i$, $F_{ij}$ and $G_i$ as roots
of $\mathrm{E}_6$, exactly $15$ of them are orthogonal to $r$. The other $12$ roots are
\begin{equation}\label{equation:non_orthogonal_roots}
E_1,F_{35};\quad{}E_3,F_{15};\quad{}E_5,F_{13};\quad{}F_{24},G_6;\quad{}F_{26},G_4;\quad{}F_{46},G_2.
\end{equation}
These form  a {\em Schl\"afli double six}.
The $36$ double six configurations on a cubic surface
are in bijection with the  $36$ positive roots of $\mathrm{E}_6$. 
Each of the six pairs forms an $\mathrm{A}_2$ subroot system with $d_1+d_3+d_5$. 
The non-shared vertices in the (a) surface 
are labeled by these pairs.

The $12$ rays labeled by 
 (\ref{equation:non_orthogonal_roots}) emanate from $Q$,
 and the other $15$ rays emanate from $P$.
  Each other vertex has $7$ outgoing rays, namely its labels in
   Figure
 \ref{figure:finite_part_a} and the $5$ roots  orthogonal to both of these.
Figure \ref{figure:tree_a}
shows the resulting $27=12+15$ trees at infinity.


The $40$ type (b) rays in the Naruki fan are in bijection with the type $\mathrm{A}_2^{\times{}3}$ subroot systems in $\mathrm{E}_6$. Figure \ref{figure:finite_part_b} illustrates the stable fiber over a point lying on the ray corresponding to
\begin{equation}\label{equation:type_a23_subroot_system}
\begin{array}{c}
d_1-d_3, \,\, d_1+d_2+d_5, \,\, d_2+d_3+d_5, \\
d_2-d_5, \,\, d_2+d_4+d_6, \,\, d_4+d_5+d_6,\\
d_4-d_6, \,\, d_1+d_3+d_4, \,\, d_1+d_3+d_6.
\end{array}
\end{equation}
This is the union of three type $\mathrm{A}_2$ subroot systems that are pairwise orthogonal.
The bounded complex consists of $10$ triangles. The central triangle $P_1P_2P_3$ 
has $3$ other triangles attached to each edge. The $9$ pendant vertices are labeled with the roots in (\ref{equation:type_a23_subroot_system}). The $3$ vertices in the triangles attached to the same edge are labeled with $3$ roots in a type $\mathrm{A}_2$ subroot system.

Each of $P_1$,$P_2$ and $P_3$ is connected with $9$ rays, labeled with the roots in $\mathrm{E}_7\backslash{}\mathrm{E}_6$ that are orthogonal to a type $\mathrm{A}_2$ subroot system in (\ref{equation:type_a23_subroot_system}). Each of the other vertices is connected with $6$ rays. The labels of these rays are the roots in $\mathrm{E}_7\backslash{}\mathrm{E}_6$ that are orthogonal to the label of that vertex but are not orthogonal to the other two vertices in the same group.

All of the $27$ trees are isomorphic, as shown in Figure \ref{figure:tree_b}. In each tree, the $10$ leaves are partitioned into $10=4+3+3$, by orthogonality with the type $\mathrm{A}_2$ subroot systems in (\ref{equation:type_a23_subroot_system}). 
 The bounded part of the tree is connected by two flaps to two edges 
containing the same $P_i$.

\smallskip

We close this paper with a brief discussion of
open questions and future directions.
One obvious question is whether our construction
can be extended to del Pezzo surfaces of 
degree $d = 2$ and $d=1$. In principle,
this should be possible, but the complexity
of the algebraic and combinatorial computations
 will be very high. In particular,
the analogues of Theorem \ref{thm:modify}
for $7$ and $8$ points in $\TP^2$ are likely to 
require rather complicated genericity hypotheses.

For $d=4$, we were able compute the Naruki fan
${\rm trop}(\mathcal{Y}^0)$ without any prior knowledge, by just applying 
the software {\tt gfan} to the $45$ trinomials in Proposition
 \ref{prop:IXdeg4}. We believe that the same will work
  for $d=3$, and that even the
  tropical basis property
 \cite[\S 2.6]{MacStu} will hold:
 
 \begin{conjecture} \label{conj:tropbasis}
The $270$ trinomial relations listed in Proposition
  \ref{prop:IXdeg3}
form a tropical basis.
\end{conjecture}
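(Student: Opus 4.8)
The plan is to verify the defining property of a tropical basis directly. Writing $\mathcal{F}$ for the set of $270$ trinomials and $I = I_\mathcal{X}$, we must show that the tropical prevariety $\mathcal{P} = \bigcap_{f \in \mathcal{F}} \trp(V(f))$ coincides with $\trp(V(I))$. The inclusion $\trp(V(I)) \subseteq \mathcal{P}$ is automatic because $\mathcal{F} \subseteq I$, so the entire content is the reverse inclusion. Equivalently, by the initial-ideal criterion of \cite[\S 2.6]{MacStu}, for every weight vector $w$ at which no $\mathrm{in}_w(f)$ with $f \in \mathcal{F}$ is a monomial, I must show that $\mathrm{in}_w(I)$ contains no monomial. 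The crucial advantage here is that the right-hand side $\trp(V(I))$ is already known completely: for generic specializations it is the surface of Theorem~\ref{thm:deg3}, and in general it is recorded in Table~\ref{tab:cubicsurf}, all obtained through the commutative diagram \eqref{equation:fan_commutative_diagram} and Lemma~\ref{lem:bignumbers}. Thus the task is not to compute $\trp(V(I))$ but to certify that the prevariety $\mathcal{P}$ acquires no extra cones from passing to the trinomials alone.

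First I would lift the question to the Bergman fan $\mathrm{Berg}(\mathrm{E}_7)$, which already controls $\trp(\mathcal{G}^0)$, and hence the universal family, through the map $A$ in \eqref{equation:fan_commutative_diagram} and the circuit computation of Lemma~\ref{lem:bignumbers}. Recall from the proof of Proposition~\ref{prop:IXdeg3} that the $270$ trinomials arise from the $315$ canonical linear trinomials cutting out the linear space $L \subset \PP^{134}$, pulled back to the Cox ring of the toric variety $\mathcal{T}$ and rewritten in the $27$ line-variables via \eqref{eq:dtoEFG}, keeping those that involve a root with $d_7$. By the fundamental characterization used in Lemma~\ref{lem:bignumbers} — a point lies in $\mathrm{Berg}(\mathrm{E}_7)$ if and only if the minimum is attained at least twice on every circuit — an honest tropical basis for the governing linear structure is furnished by all $100662348$ circuits of the $\mathrm{E}_7$ matroid, of cardinalities $3$ through $8$. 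The conjecture asserts that this small, rigidly structured trinomial set already suffices in place of the full circuit condition, once one restricts to the torus $(\bar K^*)^{27}/(\bar K^*)^{7}$ (equivalently, after the saturation by the $27$ variables and $36$ roots of Proposition~\ref{prop:IXdeg3}).

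The main obstacle is precisely this reduction, and it is where the special geometry of $\mathrm{E}_7$ must be exploited: for a general matroid the short relations do not cut out the Bergman fan, so the statement cannot be purely formal. Concretely, I would enumerate the $W(\mathrm{E}_7)$-orbits of circuits of each cardinality $4,\dots,8$ from the circuit list already computed for Lemma~\ref{lem:bignumbers}, and for each such circuit $C$ show that any $w$ satisfying all the three-term conditions automatically satisfies the min-attained-twice condition on $C$. The mechanism I expect is that every larger circuit decomposes, through the dense web of $\mathrm{A}_2$ and $\mathrm{A}_2^{\times 3}$ subsystems in $\mathrm{E}_7$ used throughout Section~\ref{sec:combinatorics}, into overlapping small relations whose repeated minima chain together to force a repeated minimum on $C$; carrying this out orbit by orbit is the hard and potentially delicate part, since it may genuinely fail on the linear ideal and only hold after passing to the torus and quotienting by $(\bar K^*)^{7}$.

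Finally, I would close the argument by matching the prevariety against the known answer. Using the subdivision of the Naruki fan from Lemma~\ref{lem:barycentric}, together with the two generic fiber types of Theorem~\ref{thm:deg3} and the degenerate types in Table~\ref{tab:cubicsurf}, it suffices to certify, $W(\mathrm{E}_6)$-orbit by orbit of cones, that $\mathcal{P}$ is $2$-dimensional with the recorded f-vectors — that is, that no spurious higher-dimensional or extraneous two-dimensional cone survives the three-term conditions. In practice this is a finite, if large, computation obtained by specializing $I_\mathcal{X}$ to the $81$ quadrics $I_X$ over $\mathbb{Q}(t)$ and testing the prevariety membership on each representative. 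I expect the genuine mathematical difficulty to remain concentrated in the previous step, namely proving that the three-term relations imply all the higher circuit conditions for $\mathrm{E}_7$; the final matching should then be bookkeeping against the already-computed surfaces.
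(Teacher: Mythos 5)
First, a point of order: the statement you were given is Conjecture \ref{conj:tropbasis}, which the paper explicitly leaves \emph{open} --- the authors state it precisely because their {\tt gfan} computation on $I_X$ over $\mathbb{Q}(t)$ did not terminate, and Theorem \ref{thm:deg3} is proved by the alternative moduli-fan method of Section \ref{sec2} instead. So there is no proof in the paper to compare against, and your text must be judged as a stand-alone argument. As such, it is a research programme rather than a proof: the entire content of the conjecture --- that every weight vector satisfying the $270$ three-term min-twice conditions lies in $\trp(X^0)$, equivalently that $\mathrm{in}_w(I)$ contains no monomial for every $w$ in the prevariety --- sits in your third paragraph, where you only say you ``expect'' the larger circuits to decompose through $\mathrm{A}_2$-webs into overlapping small relations, orbit by orbit. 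That step \emph{is} the conjecture; you have deferred it, not established it, and you concede as much.

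Beyond the deferral, the reduction to $\mathrm{Berg}(\mathrm{E}_7)$ is not set up correctly. The circuits of the rank-$7$ matroid $\mathrm{E}_7$ form a tropical basis for the \emph{linear} ideal of the arrangement complement, cutting out $\mathrm{Berg}(\mathrm{E}_7)\subset\T\PP^{62}$; the $270$ trinomials, by contrast, are quartics in the $27$ Cox variables on a different torus, obtained from the $315$ linear trinomials of the G\"opel linear space by pullback along a monomial map to the Cox ring of $\mathcal{T}$ followed by the substitution (\ref{eq:dtoEFG}). Moreover $\trp(\mathcal{G}^0)$ is the \emph{image} of $\mathrm{Berg}(\mathrm{E}_7)$ under the linear map $A$, not $\mathrm{Berg}(\mathrm{E}_7)$ itself, and the tropical-basis property is not preserved under coordinate projections, monomial maps, or the saturation by the $27$ variables and $36$ roots required in Proposition \ref{prop:IXdeg3}; each transfer would need its own argument, and none is supplied. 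Finally, your closing ``matching'' step is insufficient as described: testing membership over $\mathbb{Q}(t)$ on representatives drawn from the cones of the \emph{known} answer only re-verifies the inclusion $\trp(V(I))\subseteq\mathcal{P}$, which you already have for free. What must be computed is the prevariety $\mathcal{P}$ of the $270$ tropical trinomials itself, as a polyhedral object, together with a certification that its support has no cone outside $\trp(X^0)$ --- a finite but large computation that is essentially the one the paper reports as currently infeasible. In short: your framing of what a tropical basis is, and the idea of exploiting $W(\mathrm{E}_7)$-orbits to organize the obstructions, are reasonable starting points, but the conjecture is exactly as open after your text as before it.
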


This paper did not consider 
embeddings of del Pezzo surfaces
into projective spaces. However, 
it would be very interesting to 
study these via the results obtained here.
For cubic surfaces in $\PP^3$, we should see
a shadow of Table \ref{tab:cubicsurf} in  $\TP^3$.
Likewise, for complete intersections of
two quadrics in $\PP^4$, we should see a shadow
of Figures \ref{fig:clebsch} and \ref{fig:clebsch2} in $\TP^4$.
One approach is to start with the following 
tropical modifications of the ambient spaces $\TP^3$ resp.~$\TP^4$.
Consider a graded component in
(\ref{eq:coxring}) with $\mathcal{L}$  very ample.
Let $N+1$ be the number of monomials
in $E_i,F_{ij}, G_k$ that lie in $H^0(X,\mathcal{L})$.
The map given by these monomials
embeds $X$ into a linear subspace  of $\PP^{N}$.
The corresponding tropical surfaces in $\TP^{N}$
should be isomorphic to the tropical del Pezzo surfaces constructed here.
In particular, if $\mathcal{L} = -K$ is the anticanonical bundle,
then the subspace has dimension $d$, and the ambient
dimensions are $N = 44$ for $d=3$,
and $N= 39$ for $d=4$.
In the former case, the $45$ monomials
(like $E_1 F_{12} G_{2}$ or $F_{12} F_{34} F_{56}$)
correspond to  Eckhart triangles.
In the latter case, the $40$ monomials
(like  $E_1 E_2 F_{12} G$ or $E_1 F_{12} F_{13}  F_{45}$)
are those of degree $(4,2,2,2,2,2)$ in the
grading~(\ref{eq:D5grading}). 
The tropicalizations of  these {\em combinatorial anticanonical embeddings},
$X \subset \PP^3 \subset \PP^{44}$ for $d=3$ and 
$X \subset \PP^4 \subset \PP^{39}$ for $d=4$,
should agree with our surfaces here.
This  will help in resolving remaining issues surrounding the excess of 
lines in tropical cubic surfaces. Examples of the superabundance of tropical lines 
 on generic smooth tropical cubic 
hypersurfaces were first found by Vigeland~\cite{Vig} and these examples were later considered in 
\cite{BK} and  \cite{BS}. 

One last consideration concerns cubic surfaces defined over $\R$.
 A cubic surface  equipped with a real structure induces another involution on the $27$ metric 
 trees corresponding to real  $(-1)$-curves.
These trees already come partitioned by combinatorial type, depending on the type of tropical cubic surface. 
One could ask which trees can result from real lines, 
and whether the tree arrangement reveals
 Segre's partition of real lines on cubic surfaces
 into hyperbolic and elliptic types \cite{Segre}.
   For example, for the (aaaa) and (aaab) types,
 if the involution on the trees from the real structure is the trivial one, then the trees with combinatorial type occurring exactly three times 
always correspond to hyperbolic real lines.

\bigskip
\bigskip

\noindent
{\bf Acknowledgements.}\\
This project started during the 2013 program on
{\em Tropical Geometry and Topology} at 
the Max-Planck Institut f\"ur Mathematik
in Bonn, with Kristin Shaw and Bernd Sturmfels in residence.
Qingchun Ren and Bernd Sturmfels
were supported by NSF grant DMS-0968882. Kristin Shaw had
support from the Alexander von Humboldt Foundation in the form of a Postdoctoral Research Fellowship. 
We are grateful to Maria Angelica Cueto, Anand Deopurkar 
and also an anonymous referee for helping us to improve this paper.

\medskip

\smallskip
\bigskip

\footnotesize
\noindent {\bf Authors' addresses:}

\smallskip

\noindent Qingchun Ren,  Google Inc, Mountain View, CA 94043,
USA, {\tt qingchun.ren@gmail.com}
\smallskip

\noindent Kristin Shaw,  Technische Universit\"at
Berlin, MA 6-2, 10623 Berlin, Germany,
{\tt shaw@math.tu-berlin.de}

\smallskip

\noindent Bernd Sturmfels,  University of California, Berkeley, CA 94720-3840,
USA, {\tt bernd@berkeley.edu}

\end{document}